\newcommand{\mybox}{%
    \collectbox{%
        \setlength{\fboxsep}{1pt}%
        \fbox{\BOXCONTENT}%
    }%
}
\let\emptyset\varnothing
\numberwithin{equation}{section}
\newtheorem{thm}{Theorem}[section]
\newtheorem{prop}[thm]{Proposition}
\newtheorem{lem}[thm]{Lemma}
\newtheorem{cor}[thm]{Corollary}
\theoremstyle{definition} 
\newtheorem{dfn}[thm]{Definition}
\theoremstyle{remark}
\newtheorem{rem}[thm]{Remark}
\newcommand{\beq}{\begin{equation}}
\newcommand{\eeq}{\end{equation}}
\newcommand{\be}{\begin{equation*}}
\newcommand{\ee}{\end{equation*}}
\newcommand{\bC}{\mathbb{C}}
\newcommand{\bZ}{\mathbb{Z}}
\newcommand{\bF}{\mathbb{F}}
\newcommand{\bS}{\mathbb{S}}
\newcommand{\bN}{\mathbb{N}}
\newcommand{\ttb}{\mathtt{b}}
\newcommand{\mfk}{\mathfrak}
\newcommand{\g}{\mathfrak{g}}
\newcommand{\gl}{\mathfrak{gl}}
\newcommand{\fksp}{\mathfrak{sp}}
\newcommand{\fksl}{\mathfrak{sl}}
\newcommand{\rY}{\mathrm{Y}}
\newcommand{\End}{\mathrm{End}}
\newcommand{\sdet}{{\mathrm{sdet}}}   
\newcommand{\gr}{{\mathrm{gr}}}
\newcommand{\tl}{\tilde}
\newcommand{\wtl}{\widetilde}
\newcommand{\gge}{\geqslant}
\newcommand{\lle}{\leqslant}
\newcommand{\La}{\Lambda}
\newcommand{\Y}{{\mathscr{Y}}}
\newcommand{\X}{{\mathscr{X}}}
\newcommand{\SY}{{\mathscr{SY}}}
\newcommand{\YDrN}{{\mathscr{Y}^{\mathrm{Dr}}_N}}
\newcommand{\SX}{{\mathscr{SX}}}
\begin{document}
\pagestyle{myheadings}
\setcounter{page}{1}

\title[A Drinfeld type presentation of twisted Yangians]{A Drinfeld type presentation of twisted Yangians}

\author{Kang Lu}
\author{Weiqiang Wang}
\address{Department of Mathematics, University of Virginia, 
Charlottesville, VA 22903, USA}\email{kang.lu@virginia.edu, ww9c@virginia.edu}

\author{Weinan Zhang}
\address{Department of Mathematics and New Cornerstone
Science Laboratory, The University of Hong Kong, Hong Kong SAR, P.R.China}
\email{mathzwn@hku.hk}

\subjclass[2020]{Primary 17B37.}
	\keywords{Drinfeld presentation, twisted Yangians}

	\begin{abstract}
		We develop a Gauss decomposition approach to establish a Drinfeld type current presentation for Olshanski's twisted Yangians associated to the orthogonal Lie algebras (also called twisted Yangians of type AI), settling a longstanding open problem. We expect that this will open the door for finding current presentations for other twisted Yangians. 
	\end{abstract}
	
	\maketitle
 \setcounter{tocdepth}{1}
	\tableofcontents

\thispagestyle{empty}

\section{Introduction}

\subsection{The problem}
Motivated by the quantum inverse scattering method \cite{FST79,FT84}, Drinfeld introduced Yangians which have provided a powerful mathematical framework for understanding the symmetries and algebraic structures underlying various physical phenomena; see \cite{Lo16} and the references therein. 
Yangians are deformations of current algebras, and they admit a remarkable current presentation, also known as Drinfeld's new presentation \cite{Dr88}. Olshanski's twisted Yangians \cite{Ol92} were defined via an R-matrix presentation. They are closely related to classical Lie algebras of type BCD and their representations \cite{MNO96}, just as the Yangian for the general linear Lie algebra $\gl_N$ is intimately related to representations of $\gl_N$; these rich topics are the subjects of Molev's book \cite{Mol07}. 

Twisted Yangians are deformations of twisted current algebras, and finding a Drinfeld type current presentation for them has been a long-standing problem since \cite{Ol92}. The current presentations of twisted Yangians have potential applications in the study of quantum integrable systems with boundary conditions \cite{Skl88}, such as obtaining recurrence relations and action relations, scalar products and norms of Bethe vectors for open spin chain models \cite{BKI93,Sl07,Reg23}. The current presentation also reveals a maximal commutative subalgebra, which is useful in representation theory, and allows the construction of $q$-characters for twisted Yangians, similar to the case of usual Yangians \cite{K95}. We refer to \S\ref{subsec:future} below for discussion of further potential applications.

\subsection{Our approach}

The goal of this paper is to establish a Drinfeld type current presentation for the twisted Yangians $\Y_N$ associated to the orthogonal Lie algebras $\mathfrak{so}_N$ (which can be referred to as twisted Yangians of type AI, as we explain below). 
We shall develop a Gauss decomposition approach to produce the desired current generators and relations, despite the absence of a natural triangular decomposition of $\Y_N$. 

Gauss decomposition is a conceptual approach for establishing the isomorphism between R-matrix presentations and current presentations of Yangians; see Brundan-Kleshchev \cite[\S 5]{BK05} for $\rY(\gl_N)$. 
Establishing such an isomorphism for Yangians of type BCD is technically  more challenging and was accomplished more recently by Jing-Liu-Molev \cite{JLM18}. In classifying the finite-dimensional irreducible representations of $\Y_N$ \cite{Mol98}, it is necessary to separate the cases based on the parity of $N$ (corresponding to type B and D Lie algebras). Similar type-preserving embeddings were used crucially in establishing the isomorphisms between the R-matrix presentation and the current presentation of Drinfeld's Yangians of type BCD \cite{JLM18}.  

In our approach, the computations of the Gauss decomposition of $\Y_N$ for small $N$ play a fundamental role in the general case. The case for $N=2$ produces the rank one generators and relations, including a novel relation  (typical for twisted Yangians) which identifies the upper and lower triangular currents. The next case which is not obvious but crucial to us turns out to be $N=3$ (not $N=4$); this computation produces a new Serre relation among current generators, which is much more sophisticated to formulate and prove than its counterpart for Yangians. Note that Brown \cite{Br16} gave a {\em parabolic} version of Drinfeld presentation for $\Y_3$, in which Serre relations do not appear. 

One conceptual way to classify irreducible symmetric pairs is to use  Satake diagrams (which roughly speaking are Dynkin diagrams with decorations); cf. \cite{OV90}. 
In this paper it is crucial for us to take the point of view that the twisted Yangian $\Y_N$ is associated with a Satake diagram of split type AI (instead of regarding that $\Y_N$ is associated with $\mathfrak{so}_N$). From this view, it is natural for us to consider embeddings $\Y_{M} \rightarrow \Y_{N}$ for $M\lle N$ where $M, N$ are not assumed to have the same parity. Via the detailed study of such embeddings, we show that the rank 1 and rank 2 generators and relations established for $\Y_2$ and $\Y_3$ give rise to all the generators and defining relations for the current presentation of $\Y_N$, for all $N$. We remark that the current presentation of $\Y_N$ manifestly uses the type $A_{N-1}$ Cartan matrix (or the corresponding split Satake diagram of type AI) as the input; see Theorem~\ref{mainthm}. 

A variant of the twisted Yangians $\Y_N$, known as special twisted Yangians and denoted by $\SY_N$ here, is also studied systematically in this paper. We obtain a current presentation of $\SY_N$ (see Theorem \ref{mainthm2}). The current presentations of $\Y_N$ and $\SY_N$ show both striking similarities and differences with the Drinfeld presentations for the Yangians $\rY(\mfk{gl}_N)$ and $\rY(\mfk{sl}_N)$.

\subsection{Future directions}
\label{subsec:future}

Motivated by works in the math physics literature \cite{BR01, Ra01}, Brundan-Kleshchev \cite{BK05} established various parabolic presentations of $\rY(\gl_N)$, which interpolate between the R-matrix presentation and Drinfeld presentation. These constructions were instrumental in their sequels in developing direct connections between representation theory of shifted Yangians and finite W-algebras of type A. The current presentation of $\Y_N$ in this paper will lead to shifted twisted Yangians and applications to finite W-algebras of classical type; see Brown \cite{Br16} (and the references therein) for earlier attempts in special cases. Such connections have been pursued in \cite{LPT+25}. 

Yangians and affine quantum groups are Hopf algebra deformations of current algebras and loop algebras, respectively. On the other hand, twisted Yangians and affine $\imath$quantum groups are coideal subalgebras of Yangians and affine quantum groups, and they are coideal deformations of twisted current algebras and twisted loop algebras associated to symmetric pairs (or Satake diagrams). The $\imath$quantum groups arise from quantum symmetric pairs introduced by G.~Letzter and generalized by Kolb (cf. \cite{LW21} and the references therein). Twisted Yangians associated to symmetric pairs in a $\mathcal J$-presentation were introduced in \cite{Ma02} and in R-matrix presentations (in classical type) were introduced in \cite{Ol92,MR02, GR16}; see also Guay-Regelskis-Wendlandt \cite{GRW17}. From the viewpoint of symmetric pairs and Satake diagrams, $\Y_N$ is the twisted Yangian of type AI (as it is associated to a symmetric pair of type AI). 

Yangians can also be obtained via a degeneration from affine quantum groups. In a companion paper \cite{LWZ24}, the authors have developed a degeneration approach to produce algebras with current presentations from affine $\imath$quantum groups in Drinfeld type presentations (by Ming Lu and two of the authors \cite{LW21, Z22}); these algebras are expected to form a wider class of twisted Yangians associated to split Satake diagrams in current presentations. This is supported by the case of type AI, as the resulting current presentations from the degeneration and from the Gauss decomposition in this paper match perfectly. 

The constructions in this paper will likely pave the way for finding Drinfeld type current presentations for general twisted Yangians; see \S \ref{subsec:gen} for further discussions. As all twisted Yangians of split type ADE share the same rank 1 and 2 Satake subdiagrams (of split $A_1$ and $A_2$ types, respectively), we expect that the rank 1 and rank 2 relations found in this paper give rise to a current presentation for twisted Yangians of {\em split} type D and E (associated to the corresponding Cartan matrices); note that twisted Yangians of type D but not of type E were defined in R-matrix form by Guay-Regelskis \cite{GR16}. 

In contrast to the setting of Yangians where there is a unique rank 1 (associated to $\fksl_2$) and a uniform presentation for rank 2, twisted Yangians exhibit several real rank 1 and many more rank 2 cases; here the (real) rank is understood in the sense of symmetric pairs and Satake diagrams. Accordingly the current presentations of these rank 1 and 2 twisted Yangians must be established separately. Moreover, there is also a real rank 0 and compact rank 1 twisted Yangian of type AII (i.e., Olshanski's twisted Yangian associated to $\fksp_2$) that requires   separate treatment. These rank 0-1-2 generators and relations are then expected to give rise to current presentations of general twisted Yangians.

\subsection{Organization}

The paper is organized as follows. In the preliminary Section \ref{sec:tY}, we review twisted Yangians, extended twisted Yangians and special twisted Yangians. Some basic symmetries of these algebras are recalled. 
In Section \ref{sec:GD}, we study the new current generators arising from the Gauss decomposition for $\Y_N$ and establish some simple relations among them. We also describe the behaviors of the current generators under various symmetries and embeddings of twisted Yangians. 

Various relations in the (extended) twisted Yangians for $N=2$ and $N=3$ are computed in both generating function form and component-wise form in Section \ref{sec:low rank}. In Section \ref{sec:Drinfeld}, we formulate and prove the Drinfeld type presentations for twisted Yangian $\Y_N$ and special twisted Yangian $\SY_N$.

\vspace{2mm}
\noindent {\bf Acknowledgement.}
The authors are partially supported by the NSF grants DMS-2001351 and DMS-2401351. WZ is partially supported by the New Cornerstone Foundation through the New Cornerstone Investigator grant awarded to Xuhua He.

\section{Twisted Yangians of type AI}
\label{sec:tY}

In this section, we recall the basics about Yangians and twisted Yangians from \cite{MNO96,Mol07}. We observe that the twisted Yangians can be defined with a simpler symmetry relation. 

\subsection{Yangians}
\label{subsec:Y}

The  \textit{Yangian} $\rY(\gl_N)$ corresponding to the Lie algebra $\gl_N$ is a unital associative algebra with generators $t_{ij}^{(r)}$, where $1\lle i,j\lle N$ and $r\in\bZ_{>0}$, and the defining relations 
\[
(u-v)[t_{ij}(u),t_{kl}(v)]=t_{kj}(u)t_{il}(v)-t_{kj}(v)t_{il}(u).
\]
Here we have used the generating series in an indeterminate $u$
\[
t_{ij}(u)=  \delta_{ij}+t_{ij}^{(1)}u^{-1}+t_{ij}^{(2)}u^{-2}+\cdots,
\]
where $\delta_{ij}$ denotes the Kronecker delta.

Equivalently, the Yangian $\rY(\gl_N)$ has an R-matrix presentation as follows. Let $R(u)$ be the Yang R-matrix
\begin{align} \label{Ru}
R(u)=1-\frac{P}{u}\in \End(\bC^N\otimes \bC^N)[u^{-1}],\quad \text{where} \quad P=\sum_{i,j=1}^N E_{ij}\otimes E_{ji},
\end{align}
and
\[
T(u)=\sum_{i,j=1}^N E_{ij}\otimes t_{ij}(u)\in \End(\bC^N)\otimes\rY(\gl_N)[[u^{-1}]]. 
\]
Then the defining relations of $\rY(\gl_N)$ can be written as (cf. \cite[Chap. 1]{Mol07})
\be
R(u-v)T_1(u)T_2(v)=T_2(v)T_1(u)R(u-v)\in  \End(\bC^N\otimes \bC^N)\otimes\rY(\gl_N)[[u^{-1}]].
\ee

Note that the Yang R-matrix satisfies the Yang-Baxter equation
\be
R_{12}(u-v)R_{13}(u)R_{23}(v)=R_{23}(v)R_{13}(u)R_{12}(u-v).
\ee

Let $g(u)$ be any formal power series in $u^{-1}$ with leading term $1$,
\[
g(u)=1+g_1u^{-1}+g_2u^{-2}+\cdots\in \bC[[u^{-1}]].
\]
There is an automorphism of $\rY(\gl_N)$ defined by
\beq\label{eq:mu_f-A}
\mu_{g(u)}:T(u)\to g(u)T(u).
\eeq

The Yangian for $\mathfrak{sl}_N$, denoted by $\rY(\mathfrak{sl}_N)$, is the subalgebra of $\rY(\gl_N)$ which consists of all elements stable under all the automorphisms of the form \eqref{eq:mu_f-A}.

Consider the filtration on $\rY(\gl_N)$ obtained by setting
\begin{align}  \label{filter:Y}
\deg t_{ij}^{(r)}=r-1
\end{align}
for every $r\gge 1$. Denote by $\mathrm{gr}\rY(\gl_N)$ the associated graded algebra. We write $\bar t_{ij}^{(r)}$ the image of $t_{ij}^{(r)}$ in $\mathrm{gr}\rY(\gl_N)$. Then the map
\[
\mathrm{U}(\gl_N[z])\to \mathrm{gr}\rY(\gl_N), \qquad e_{ij}\otimes z^{r}\mapsto \bar t_{ij}^{(r+1)},
\]
induces an Hopf algebra isomorphism.  Let $\mathcal F_s(\rY(\gl_N))$ be the subspace of $\rY(\gl_N)$ spanned by elements of degree $\lle s$. 

\subsection{Twisted Yangians}

For $1\lle i,j\lle N$ and an indeterminate $u$, we introduce the generating series 
\beq\label{siju}
s_{ij}(u)=  \delta_{ij}+s_{ij}^{(1)}u^{-1}+s_{ij}^{(2)}u^{-2}+\cdots.
\eeq

\begin{dfn}\label{tydef}
The \textit{extended twisted Yangian} $\X_N$ corresponding to the Lie algebra $\mathfrak o_N$ is the unital associative algebra  with generators $s_{ij}^{(r)}$, where $1\lle i,j\lle N$ and $r\in\bZ_{>0}$, whose generating series \eqref{siju} satisfies the following quaternary relation:
\beq\label{quater u}
\begin{split}
(u^2-v^2)[s_{ij}(u),s_{kl}(v)]=&\, (u+v)(s_{kj}(u)s_{il}(v)-s_{kj}(v)s_{il}(u))\\
- &\, (u-v)(s_{ik}(u)s_{jl}(v)-s_{ki}(v)s_{lj}(u))\\
&\, \qquad \ \  +  s_{ki}(u)s_{jl}(v)-s_{ki}(v)s_{jl}(u).
\end{split}
\eeq

The  \textit{twisted Yangian} $\Y_N$ corresponding to the Lie algebra $\mathfrak o_N$ is a unital associative algebra with the same generators as for $\X_N$ above, whose generating series satisfy the quaternary relation \eqref{quater u}
and an additional symmetry relation
\begin{align} \label{s11}
    s_{11}(u)=s_{11}(-u).
\end{align} 
\end{dfn}
If follows by definitions that we have a canonical epimorphism $\X_N \twoheadrightarrow \Y_N$. 

\begin{lem} \label{lem:s11}
Suppose $s_{ij}(u)$, $1\lle i,j\lle N$, satisfy the quaternary relation \eqref{quater u}. Then the relation \eqref{s11} is equivalent to the following symmetry relations
\beq\label{sym u}
s_{ji}(-u)=  s_{ij}(u)+\frac{s_{ij}(u)-s_{ij}(-u)}{2u}
\eeq
for all $1\lle i,j\lle N$.
\end{lem}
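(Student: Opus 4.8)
The plan is to prove the two implications separately. The reverse one, \eqref{sym u}$\,\Rightarrow\,$\eqref{s11}, is a one-line check: setting $i=j=1$ in \eqref{sym u} and multiplying through by $2u$ gives $(2u+1)\,s_{11}(-u)=(2u+1)\,s_{11}(u)$, and since multiplication by the polynomial $2u+1$ is injective on formal series in $u^{-1}$ with finitely many positive powers of $u$ (compare the top-degree terms), we conclude $s_{11}(u)=s_{11}(-u)$.

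For the forward implication, assume \eqref{s11}. The first step is to specialize $v=-u$ in the quaternary relation \eqref{quater u}. The prefactor $u^2-v^2$ annihilates the commutator on the left, the coefficient $u+v$ of the first line on the right becomes $0$, and $u-v$ becomes $2u$; what survives is the quadratic identity
\[
2u\bigl(s_{ik}(u)s_{jl}(-u)-s_{ki}(-u)s_{lj}(u)\bigr)=s_{ki}(u)s_{jl}(-u)-s_{ki}(-u)s_{jl}(u),\qquad 1\lle i,j,k,l\lle N.
\]
The second step is to put $j=l=1$ here and use \eqref{s11} to replace every $s_{11}(-u)$ by $s_{11}(u)$. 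As $s_{11}(u)=1+s_{11}^{(1)}u^{-1}+\cdots$ is invertible as a formal series, it may be cancelled from the right of both sides, leaving the \emph{linear} identity
\[
2u\bigl(s_{ik}(u)-s_{ki}(-u)\bigr)=s_{ki}(u)-s_{ki}(-u),\qquad 1\lle i,k\lle N.
\]

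The last step is elementary algebra. Writing this linear identity for the ordered pairs $(i,k)$ and $(k,i)$, solving the first for $s_{ki}(u)$ and substituting into the second, one obtains $(2u-1)(2u+1)\,s_{ik}(u)=(2u-1)\bigl(2u\,s_{ki}(-u)+s_{ik}(-u)\bigr)$; cancelling $2u-1$ and then dividing by $2u$ yields
\[
s_{ki}(-u)=s_{ik}(u)+\frac{s_{ik}(u)-s_{ik}(-u)}{2u},
\]
which is precisely \eqref{sym u} with $(i,j)$ taken to be $(i,k)$. Since $i$ and $k$ are arbitrary, all of \eqref{sym u} follows. (Note the case $i=k$ of the linear identity already gives $s_{ii}(u)=s_{ii}(-u)$, so the diagonal symmetry appears along the way.)

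I expect the main obstacle to be purely bookkeeping: one must make sure every cancellation is legitimate — that $s_{11}(u)$, being a formal series with leading term $1$, is invertible (hence right-cancellable), and that $2u$ and $2u\pm1$ are not zero divisors in the ring of formal Laurent series in $u^{-1}$ over the coefficient algebra (because their leading coefficients are nonzero scalars). The algebraic content is short; what makes the statement true is that the argument uses \eqref{s11} only via the single substitution $j=l=1$, which explains why the one corner relation forces the entire family \eqref{sym u}.
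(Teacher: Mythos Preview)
Your proof is correct and follows the same idea as the paper's: specialize the quaternary relation at $v=-u$ (equivalently $u=-v$) and cancel the invertible series $s_{11}$. The only difference is that the paper sets $i=k=1$ rather than $j=l=1$, so $s_{11}$ appears on the \emph{left} of each product; after cancelling it, the resulting linear identity $2v\bigl(s_{jl}(v)-s_{lj}(-v)\bigr)+s_{jl}(v)-s_{jl}(-v)=0$ is already \eqref{sym u}, which spares you the extra step of combining the $(i,k)$ and $(k,i)$ equations.
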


\begin{proof}
Setting $j=i$ in \eqref{sym u}, we obtain that $s_{ii}(u)=s_{ii}(-u)$ for all $1\lle i\lle N$. 

On the other hand, we assume \eqref{quater u} and \eqref{s11}. Setting $u=-v$ and $i=k=1$ in \eqref{quater u}, one obtains that
\[
2v(s_{11}(-v)s_{jl}(v)-s_{11}(v)s_{lj}(-v))+s_{11}(-v)s_{jl}(v)-s_{11}(v)s_{jl}(-v)=0.
\]
Since $s_{11}(v)$ is invertible and $s_{11}(v)=s_{11}(-v)$ by \eqref{s11}, the above relation is simplified to be
\[
2v(s_{jl}(v)-s_{lj}(-v))+s_{jl}(v)-s_{jl}(-v)=0,
\]
which is exactly the symmetry relations \eqref{sym u}.
\end{proof}

\begin{rem}
Thanks to Lemma \ref{lem:s11}, our definition of twisted Yangian $\Y_N$ is equivalent to the usual definition (cf. \cite[Def. 2.1.1]{Mol07}) using the defining relations \eqref{quater u} and \eqref{sym u}, where we have fixed the $N\times N$ symmetric matrix $(g_{ij})$ to be the identity matrix.
Similarly, one can replace the relation \eqref{s11} in Definition~\ref{tydef} by the relation $s_{ii}(u)=s_{ii}(-u)$, for a fixed $1\lle i\lle N$.
\end{rem}

The twisted Yangian $\Y_N$ has an R-matrix presentation (cf. \cite[\S2.2]{Mol07}) via the R-matrix $R(u)$ from \eqref{Ru} and 
\begin{align} \label{Su}
S(u)=\sum_{i,j=1}^N E_{ij}\otimes s_{ij}(u)\in \End(\bC^N)\otimes\Y_N[[u^{-1}]]. 
\end{align} 
Indeed, the quaternary relation \eqref{quater u} and the symmetry relation \eqref{sym u} can be respectively reformulated in the matrix form as
\begin{align}
\label{quat-mat}
R(u-v)S_1(u)R^{t}(-u-v)S_2(v) &=S_2(v)R^{t}(-u-v)S_1(u)R(u-v),
\\
\label{sym-mat}
S^{t}(-u) &=S(u)+\frac{S(u)-S(-u)}{2u}.
\end{align}
Here $t$ stands for the transpose and $R^t(u)$ is obtained from $R(u)$ by taking the transpose on the first (or second) factor.

Set 
\[
\wtl S(u)=S(u)^{-1}=\sum_{i,j=1}^N E_{ij}\otimes \tl s_{ij}(u). 
\]
Then by \eqref{quat-mat}, we have
\beq
\wtl S_2(v) R(u-v)S_1(u)R^t(-u-v)=R^t(-u-v)S_1(u)R(u-v)\wtl S_2(v).
\eeq
In terms of generating series, we have
\beq\label{sts}
\begin{split}
(u^2-v^2)[s_{ij}(u),\tl s_{kl}(v)]= &\, (u+v)\Big(\delta_{jk}\sum_{a=1}^N s_{ia}(u)\tl s_{al}(v)-\delta_{il}\sum_{a=1}^N \tl s_{ka}(v) s_{aj}(u)\Big)\\
 +&\, (u-v)\Big(\delta_{jl}\sum_{a=1}^N \tl s_{ka}(v) s_{ia}(u)-\delta_{ik}\sum_{a=1}^N s_{aj}(u) \tl s_{al}(v)\Big)\\
 &\hskip 0.8cm +\Big(\delta_{ik}\sum_{a=1}^N s_{ja}(u)\tl s_{al}(v)-\delta_{jl}\sum_{a=1}^N \tl s_{ka}(v) s_{ai}(u)\Big).
\end{split}
\eeq
We have the following immediate consequence of \eqref{sts}.

\begin{lem}\label{stslem}
If $\{i,j\}\cap \{k,l\}=\emptyset$, then $[s_{ij}(u),\tl s_{kl}(v)]=0$.
\end{lem}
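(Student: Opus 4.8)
The plan is to read off the claim directly from the identity \eqref{sts}, observing that each of the six sums on the right-hand side carries a Kronecker delta forcing an index coincidence that is excluded by the hypothesis $\{i,j\}\cap\{k,l\}=\emptyset$. First I would record the structural observation: the right-hand side of \eqref{sts} is a sum of terms, each of which is of the shape $\delta_{\alpha\beta}\sum_a(\cdots)$, where the pair $\{\alpha,\beta\}$ is one of $\{j,k\}$, $\{i,l\}$, $\{j,l\}$, $\{i,k\}$, $\{i,k\}$, $\{j,l\}$. Every one of these pairs consists of one index from $\{i,j\}$ and one index from $\{k,l\}$. Hence under the assumption $\{i,j\}\cap\{k,l\}=\emptyset$ we have $i\neq k$, $i\neq l$, $j\neq k$, $j\neq l$, so all six Kronecker deltas vanish simultaneously.

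Consequently the entire right-hand side of \eqref{sts} is zero, and therefore $(u^2-v^2)[s_{ij}(u),\tl s_{kl}(v)]=0$. Since $u^2-v^2$ is a nonzero element of the coefficient ring $\bC[u,v]$ and $\Y_N[[u^{-1},v^{-1}]]$ (or the appropriate completed polynomial ring in which these generating series live) has no $(u^2-v^2)$-torsion — multiplication by $u^2-v^2$ is injective on power series — we may cancel it to conclude $[s_{ij}(u),\tl s_{kl}(v)]=0$, which is the assertion. In component form this says $[s_{ij}^{(r)},\tl s_{kl}^{(s)}]=0$ for all $r,s\gge 1$.

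There is essentially no obstacle here: the only point requiring a word of care is the cancellation of the scalar factor $u^2-v^2$, which is routine since we are working over a domain of formal series with no zero divisors among the nonzero polynomials in $u,v$. One should perhaps also note, for completeness, that \eqref{sts} itself was derived from the quaternary relation \eqref{quat-mat} via the substitution $S(u)\mapsto \wtl S(u)=S(u)^{-1}$, so the lemma is ultimately a formal consequence of the defining relations of $\X_N$ (equivalently $\Y_N$), valid in either algebra. I would keep the proof to two or three sentences, simply pointing to the Kronecker deltas in \eqref{sts}.
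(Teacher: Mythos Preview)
Your argument is correct and is exactly what the paper intends: it states the lemma as an ``immediate consequence'' of \eqref{sts}, and your observation that every Kronecker delta on the right-hand side pairs an index from $\{i,j\}$ with one from $\{k,l\}$ is precisely the content of that remark. The only addition you make is the explicit justification for cancelling $u^2-v^2$, which is routine and fine to include.
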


\subsection{Some basic properties}
It is well known that sending
\[
S(u)\mapsto T(u)T^{t}(-u)
\]
defines an algebra embedding $\Y_N\hookrightarrow \rY(\gl_N)$, and in this way $\Y_N$ can be identified as a subalgebra of $\rY(\gl_N)$ (cf. \cite[\S2.4]{Mol07}). 
Then there is a filtration on $\Y_N$ inherited from the one given by \eqref{filter:Y} on $\rY(\gl_N)$ such that $\deg s_{ij}^{(r)}=r-1$. Let $\mathcal F_s(\Y_N)$ be the subspace of $\Y_N$ spanned by elements of degree $\lle s$ such that
\begin{align}
\label{filter:tY}
    \mathcal F_0(\Y_N) \subset \mathcal F_1(\Y_N) \subset \mathcal F_2(\Y_N) \subset \ldots, 
    \qquad\qquad \Y_N =\bigcup_{s\gge 0} \mathcal F_s(\Y_N).
\end{align}
Denote by $\gr\, \Y_N$ the associated graded algebra. Let $\bar s_{ij}^{(r)}$ be the image of $s_{ij}^{(r)}$ in the $(r-1)$-th component of $\gr\,\Y_N$.

Let $\theta$ be the involution of $\gl_N$ defined by
\[
\theta: \gl_N\longrightarrow \gl_N,\quad e_{ij}\mapsto -e_{ji}.
\]
Extend this to an involution on the current algebra $\gl_N[z]$ (denoted again by $\theta$) sending 
\begin{align*}
    g\otimes z^r \mapsto \theta(g)\otimes(-z)^r \qquad \qquad (g\in \gl_N, \; r\in\bN). 
\end{align*}
Denote by $\gl_N[z]^\theta$ the fixed point subalgebra of $\gl_N[z]$ under the involution $\theta$. Then the map
\[
\mathrm{U}(\gl_N[z]^\theta)\longrightarrow \gr\,\Y_N,\qquad e_{ij}\otimes z^{r}-(-1)^re_{ji}\otimes z^{r} \mapsto \bar s_{ij}^{(r+1)}
\]
induces an algebra isomorphism. 

If $i<j$, then by \eqref{quater u} we have
\beq\label{eq:zero-mode-s}
[s_{ij}(u),s_{j,j+1}^{(1)}]=s_{i,j+1}(u),\qquad [s_{j+1,j}^{(1)},s_{ji}(u)]=s_{j+1,i}(u).
\eeq
There is an anti-automorphism $\eta$ for $\X_N$ (and for $\Y_N$) defined by
\begin{align}
    \label{eta}
\eta:S(u)\longrightarrow S^t(u),\qquad s_{ij}(u)\mapsto s_{ji}(u).
\end{align}
The matrix $\wtl S(-u-\tfrac{N}{2})$ satisfies the quaternary relations \eqref{quat-mat}. Therefore, the map
\begin{align} \label{varpi}
\varpi_N: S(u)\mapsto \wtl S(-u-\tfrac{N}{2})
\end{align}
defines an automorphism $\varpi_N$ of the extended twisted Yangians $\X_N$. Moreover, the automorphism $\varpi_N$ is involutive, i.e., $\varpi_N\circ \varpi_N=1$.

Set 
\begin{align} \label{ii}
    i'=N+1-i. 
\end{align}
Define an automorphism $\rho_N$ of the extended twisted Yangian $\X_N$ by
\[
\rho_N:\X_N\longrightarrow \X_N,\qquad s_{ij}(u)\mapsto s_{i'j'}(u).
\]
By composition, we obtain another automorphism
\beq\label{zeta}
\zeta_N=\rho_N\circ \varpi_N:\X_N\longrightarrow \X_N,\qquad s_{ij}(u)\mapsto \tl s_{i'j'}(-u-\tfrac{N}{2}).
\eeq

Given any $M\in \bN$, we have the natural homomorphism $\jmath$ from $\X_N$ to $\X_{M+N}$ given by
\begin{align}  \label{jmath}
\jmath: \X_N \longrightarrow \X_{M+N},\quad s_{ij}(u)\mapsto s_{ij}(u).
\end{align}
There is also a homomorphism $\iota_M$ from $\X_N$ to $\X_{M+N}$ given by shifting indices,
\begin{align} \label{iota}
\iota_M:\X_{N}\longrightarrow \X_{M+N},\quad s_{ij}(u)\mapsto s_{M+i,M+j}(u).
\end{align}
The following PBW theorem for twisted Yangian $\Y_N$ is also well known (cf. \cite{Mol07}).

\begin{prop}\label{prop:PBW}
Given any linear order on the set of generators $s_{ij}^{(r)}$ and $s_{kk}^{(2r)}$, where $r\in\bZ_{>0}$, $1\lle i,j,k\lle N$ and $i>j$, the ordered monomials in these generators form a basis of $\Y_N$. 
\end{prop}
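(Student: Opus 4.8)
The plan is to deduce the PBW basis for $\Y_N$ from the known PBW structure on $\rY(\gl_N)$ together with the grading/filtration comparison already set up in the excerpt. First I would recall that $\gr\,\Y_N \cong \mathrm U(\gl_N[z]^\theta)$ as algebras, via the isomorphism $e_{ij}\otimes z^r - (-1)^r e_{ji}\otimes z^r \mapsto \bar s_{ij}^{(r+1)}$. The strategy is: (1) show that the given ordered monomials span $\Y_N$, and (2) show they are linearly independent, the second part by passing to $\gr\,\Y_N$ and invoking the classical PBW theorem for the universal enveloping algebra $\mathrm U(\gl_N[z]^\theta)$.

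For the spanning statement, I would argue by induction on the filtration degree. The symmetry relation \eqref{sym u} (equivalently Lemma~\ref{lem:s11}) expresses $s_{ji}(-u)$ in terms of $s_{ij}(u)$; reading off coefficients, each generator $s_{ji}^{(r)}$ with $i<j$ is, modulo lower filtration degree, a scalar multiple of $s_{ij}^{(r)}$ — more precisely $s_{ij}^{(r)} + (-1)^r s_{ji}^{(r)}$ lies in $\mathcal F_{r-2}(\Y_N)$ when we also use that $s_{ij}^{(r)} - s_{ij}^{(r)}(-u)$-type corrections drop degree. Hence, up to lower-degree terms, the upper-triangular generators $s_{ij}^{(r)}$ ($i\le j$) already generate, and among the diagonal ones only the even-indexed $s_{kk}^{(2r)}$ survive because $s_{kk}(u)=s_{kk}(-u)$ kills the odd coefficients. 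This matches exactly the stated generating set $\{s_{ij}^{(r)} : i>j\}\cup\{s_{kk}^{(2r)}\}$ after applying the anti-automorphism $\eta$ from \eqref{eta} (or equivalently just relabelling $i\leftrightarrow j$ to list lower-triangular generators). Then I would use the quaternary relation \eqref{quater u}: the leading term of $[s_{ij}(u),s_{kl}(v)]$ is lower order in the filtration than the product, so any product of generators can be reordered into the prescribed linear order at the cost of terms of strictly smaller filtration degree, and induction closes the spanning argument.

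For linear independence, suppose a linear combination of distinct ordered monomials vanishes; take the top filtration degree $s$ occurring and look at the images $\bar s_{ij}^{(r)}$ in $\gr\,\Y_N$. A monomial $\prod s_{i_a j_a}^{(r_a)}$ (with each factor of filtration degree $r_a-1$) of total degree $s$ maps to $\prod \bar s_{i_a j_a}^{(r_a)}$, which under the isomorphism $\gr\,\Y_N\cong \mathrm U(\gl_N[z]^\theta)$ is a monomial in the root-vector-type elements $e_{ij}\otimes z^{r} - (-1)^r e_{ji}\otimes z^{r}$ ($i>j$) and Cartan-type elements $e_{kk}\otimes z^{2r}$. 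These elements form a basis of $\gl_N[z]^\theta$ (the $\theta$-fixed subalgebra is spanned precisely by $e_{ij}z^r-(-1)^r e_{ji}z^r$ for $i\ne j$ and $e_{kk}z^{2r}$), so by the classical PBW theorem for $\mathrm U(\gl_N[z]^\theta)$ the corresponding ordered monomials are linearly independent in $\gr\,\Y_N$. Therefore the top-degree part of the relation already forces the coefficients of all degree-$s$ monomials to vanish; induction on $s$ finishes the proof.

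The main obstacle is the spanning/reduction step: one must check carefully that the quaternary relation \eqref{quater u} and the symmetry relation \eqref{sym u}, when expanded into components $s_{ij}^{(r)}$, genuinely lower the filtration degree upon commutation and upon eliminating the redundant (upper-triangular and odd-diagonal) generators — i.e.\ that the filtration $\deg s_{ij}^{(r)}=r-1$ inherited from $\rY(\gl_N)$ is compatible with all these manipulations. This is essentially a bookkeeping verification using the embedding $\Y_N\hookrightarrow\rY(\gl_N)$, $S(u)\mapsto T(u)T^t(-u)$, and the description of $\gr\,\Y_N$; once that compatibility is in hand, both halves of the argument are standard. (Since the result is stated as "well known, cf.\ \cite{Mol07}", in the paper itself one may simply cite Molev's book rather than reproduce this argument.)
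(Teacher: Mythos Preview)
The paper does not give a proof of Proposition~\ref{prop:PBW}; it simply cites \cite{Mol07} as a well-known result. Your outline is exactly the standard filtered-deformation argument one finds there: use the symmetry relation to reduce to the lower-triangular and even-diagonal generators modulo lower filtration, use the quaternary relation to reorder monomials modulo lower filtration, and then invoke classical PBW for $\mathrm U(\gl_N[z]^\theta)\cong \gr\,\Y_N$ to get linear independence. So your proposal is correct and aligned with the intended (cited) proof.

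One small slip to fix: the Cartan-type basis elements of $\gl_N[z]^\theta$ are $e_{kk}\otimes z^{2r-1}$, not $e_{kk}\otimes z^{2r}$. Since $\theta(e_{kk}\otimes z^m)=(-1)^{m+1}e_{kk}\otimes z^m$, the fixed vectors require $m$ odd; under the isomorphism $e_{ij}\otimes z^{r}-(-1)^r e_{ji}\otimes z^{r}\mapsto \bar s_{ij}^{(r+1)}$ this matches $\bar s_{kk}^{(2r)}$ as it should. With that correction your bijection between the generating set $\{s_{ij}^{(r)}:i>j\}\cup\{s_{kk}^{(2r)}\}$ and a basis of $\gl_N[z]^\theta$ is exact, and the independence step goes through.
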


\section{Gauss decomposition}
\label{sec:GD}

In this section, we introduce Gauss decomposition for the twisted Yangian $\Y_N$ and formulate several basic properties of the new current generators. A PBW basis for $\Y_N$ is given in terms of the  current generators.

\subsection{Quasi-determinants and Gauss decomposition}

Let $X$ be a square matrix over a ring with identity such that its inverse matrix $X^{-1}$ exists, 
and such that its $(j,i)$-th entry is an invertible element of the ring.  Then the $(i,j)$-th
\emph{quasi-determinant} of $X$ is defined by the first formula below and denoted graphically by the boxed notation (cf. \cite{GR97}, \cite[\S1.10]{Mol07}):
\begin{equation*}
\vert X\vert _{ij} \stackrel{\text{def}}{=} \left((X^{-1})_{ji}\right)^{-1} = \left\vert  \begin{array}{ccccc} x_{11} & \cdots & x_{1j} & \cdots & x_{1n}\\
&\cdots & & \cdots&\\
x_{i1} &\cdots &\boxed{x_{ij}} & \cdots & x_{in}\\
& \cdots& &\cdots & \\
x_{n1} & \cdots & x_{nj}& \cdots & x_{nn}
\end{array} \right\vert .
\end{equation*}

By \cite[Theorem 4.96]{GGRW:2005}, the matrix $S(u)$, for both $\X_N$ and $\Y_N$, has the following Gauss decomposition:
$$
S(u) = F(u) D(u) E(u)
$$
for unique matrices of the form
\begin{equation*}
D(u) = \left[ \begin{array}{cccc} d_1 (u) & &\cdots & 0\\
& d_2 (u) &  &\vdots\ \\
\vdots & &\ddots &\\
0 &\cdots &  &d_{N} (u)
\end{array} \right],
\end{equation*}
\begin{equation*}
E(u)=\left[ \begin{array}{cccc} \!\!1 &e_{12}(u) &\cdots & e_{1N}(u)\\
&\ddots & &e_{2N}(u) \!\!\!\!\\
& &\ddots & \vdots\\
0 & & &1 
\end{array} \right],\qquad 
F(u) = \left[ \begin{array}{cccc} \!\!1 & &\cdots &0\!\!\\
f_{21}(u) &\ddots & &\vdots\\
\vdots & & \ddots& \\
\!\!f_{N1}(u) & f_{N2}(u) &\cdots &1\!
\end{array} \right],
\end{equation*}
where the matrix entries are defined in terms of quasi-determinants:
\begin{eqnarray*}
d_i (u) &=& \left\vert  \begin{array}{cccc} s_{11}(u) &\cdots &s_{1,i-1}(u) &s_{1i}(u) \\
\vdots &\ddots & &\vdots \\
s_{i1}(u) &\cdots &s_{i,i-1}(u) &\mybox{$s_{ii}(u)$}
\end{array} \right\vert, 
\qquad \tl d_i(u)=d_i(u)^{-1}, 
\\
e_{ij}(u) &=&\tl d_i (u) \left\vert  \begin{array}{cccc} s_{11}(u) &\cdots & s_{1,i-1}(u) & s_{1j}(u) \\
\vdots &\ddots &\vdots & \vdots \\
s_{i-1,1}(u) &\cdots &s_{i-1,i-1}(u) & s_{i-1,j}(u)\\
s_{i1}(u) &\cdots &s_{i,i-1}(u) &\mybox{$s_{ij}(u)$}
\end{array} \right\vert,
\\
f_{ji}(u) &=& \left\vert  \begin{array}{cccc} s_{11}(u) &\cdots &s_{1, i-1}(u) & s_{1i}(u) \\
\vdots &\ddots &\vdots &\vdots \\
s_{i-1,1}(u) &\cdots &s_{i-1,i-1}(u) &s_{i-1,i}(u)\\
s_{j1}(u) &\cdots &s_{j, i-1}(u) &\mybox{$s_{ji}(u)$} 
\end{array} \right\vert\, 
\tl d_{i}(u).
\end{eqnarray*}
The Gauss decomposition can also be written component-wise as, for $i<j$, 
\begin{align}
s_{ii}(u)&=d_i(u)+\sum_{k<i}f_{ik}(u)d_k(u)e_{ki}(u),\nonumber\\
s_{ij}(u)&=d_i(u)e_{ij}(u)+\sum_{k<i}f_{ik}(u)d_k(u)e_{kj}(u),\label{eq:sij-Gauss}\\
s_{ji}(u)&=f_{ji}(u)d_i(u)+\sum_{k<i}f_{jk}(u)d_k(u)e_{ki}(u).\nonumber
\end{align}

We further denote
\begin{align}
    \label{gauss-gen}
e_{ij}(u) &=\sum_{r\gge 1}e_{ij}^{(r)}u^{-r},\quad f_{ji}(u)=\sum_{r\gge 1}f_{ji}^{(r)}u^{-r},\quad d_k(u)=1+\sum_{r\gge 1}d_{k}^{(r)}u^{-r},
\\
e_i(u) &=\sum_{r\gge 1}e_{i}^{(r)}u^{-r}=e_{i,i+1}(u),\quad f_i(u)=\sum_{r\gge 1}f_{i}^{(r)}u^{-r}=f_{i+1,i}(u),\quad 1\lle i<N.
\end{align}

Set 
\beq\label{edfinv}
\begin{split}
\wtl D(u)&=D(u)^{-1}=\sum_{1\lle i\lle N} E_{ii}\otimes \tl d_{i}(u),
\\
\wtl E(u)&=E(u)^{-1}=1+\sum_{1\lle i<j\lle N}E_{ij}\otimes \tl e_{ij}(u),\\
\wtl F(u)&=F(u)^{-1}=1+\sum_{1\lle i<j\lle N}E_{ji}\otimes \tl f_{ji}(u).
\end{split}
\eeq
Then we have 
\beq\label{eq:def-tilde-e-f}
\begin{split}
&\tl e_{ij}(u)=\sum_{i=i_0<i_1<\cdots<i_s=j}(-1)^s e_{i_0i_1}(u)e_{i_1i_2}(u)\cdots e_{i_{s-1}i_s}(u),\\
&\tl f_{ji}(u)=\sum_{i=i_0<i_1<\cdots<i_s=j}(-1)^s f_{i_{s}i_{s-1}}(u)\cdots f_{i_2i_1}(u) f_{i_1i_0}(u).
\end{split}
\eeq

\subsection{Properties of new current generators}
Recall $\varpi_N$ and $\iota_M$ from \eqref{varpi} and \eqref{iota}. Define an embedding $\vartheta_M:\X_N \to \X_{M+N}$ 
as the composition
\beq\label{eq:theta-m}
\vartheta_M= \varpi_{M+N}\circ \iota_M\circ \varpi_N \colon \X_N \longrightarrow \X_{M+N}.
\eeq
\begin{lem}[{\cite[Lemma 2.14.1]{Mol07}}]\label{lem:theta}
For any $1\lle i,j\lle N$, $\vartheta_M$ maps $s_{ij}(u+\tfrac{M}{2})$ to
\[
\left\vert  \begin{array}{cccc} s_{11}(u) &\cdots &s_{1,M}(u) &s_{1, M+j}(u)\\
\vdots &\ddots &\vdots &\vdots \\
s_{M1}(u) &\cdots &s_{MM}(u) &s_{M, M+j}(u)\\
s_{M+i, 1}(u) &\cdots &s_{M+i,M}(u) &\mybox{$s_{M+i,M+j}(u)$}
\end{array} \right\vert.
\]
\end{lem}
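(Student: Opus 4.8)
The plan is to unwind the definition $\vartheta_M = \varpi_{M+N}\circ\iota_M\circ\varpi_N$ directly at the level of the generating matrices, and then to recognize the resulting matrix entry as a Schur complement, which is exactly a quasi-determinant. The one structural fact I will use repeatedly is that an algebra homomorphism, extended coefficient-wise to matrices over a formal power series ring, is again a ring homomorphism, so it preserves invertibility and commutes with matrix inversion; since every matrix I invert below has leading (i.e.\ constant) coefficient an identity matrix, all the inverses genuinely live in the relevant ring $\mathrm{Mat}\big(\X_{M+N}[[u^{-1}]]\big)$, and I will use this without further comment. Recall also that $\varpi_N$ and $\varpi_{M+N}$ are automorphisms and $\iota_M$ a homomorphism, as recorded around \eqref{varpi}--\eqref{iota}.

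First I would introduce block notation in $\X_{M+N}$: write $S^{\X_{M+N}}(u)$ for its generating matrix and decompose it into blocks $S^{[ab]}(u)$, $a,b\in\{1,2\}$, of sizes $M$ and $N$, so that $S^{[11]}(u) = \big(s_{kl}(u)\big)_{1\lle k,l\lle M}$ and $S^{[22]}(u) = \big(s_{M+i,M+j}(u)\big)_{1\lle i,j\lle N}$; set $\widetilde S = (S^{\X_{M+N}})^{-1}$. By \eqref{iota}, $\iota_M$ sends the generating matrix $S^{\X_N}(u)$ of $\X_N$ to the block $S^{[22]}(u)$, hence sends $(S^{\X_N}(u))^{-1}$ to $(S^{[22]}(u))^{-1}$. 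Composing with $\varpi_N$ via \eqref{varpi}, the map $\iota_M\circ\varpi_N$ carries $S^{\X_N}(u)$ to $\big(S^{[22]}(-u-\tfrac{N}{2})\big)^{-1}$. Applying $\varpi_{M+N}$, again via \eqref{varpi}, and using that $\varpi_{M+N}$ commutes with inversion and with passing to the $[22]$-block, the spectral parameter becomes $-(-u-\tfrac N2)-\tfrac{M+N}{2} = u-\tfrac M2$; hence, after replacing $u$ by $u+\tfrac M2$ and extracting the $(i,j)$ entry,
\[
\vartheta_M\big(s_{ij}(u+\tfrac M2)\big) = \Big[\Big(\big[\widetilde S(u)\big]^{[22]}\Big)^{-1}\Big]_{ij}, \qquad 1\lle i,j\lle N.
\]

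To finish I would identify this with the asserted quasi-determinant. Since $S^{[11]}(u)$ is invertible, the block-inversion (Schur complement) formula gives $\big[\widetilde S(u)\big]^{[22]} = \big(S^{[22]}(u) - S^{[21]}(u)(S^{[11]}(u))^{-1}S^{[12]}(u)\big)^{-1}$, so $\Big(\big[\widetilde S(u)\big]^{[22]}\Big)^{-1}$ has $(i,j)$ entry
\[
s_{M+i,M+j}(u) - \sum_{k,l=1}^{M} s_{M+i,k}(u)\,\big((S^{[11]}(u))^{-1}\big)_{kl}\,s_{l,M+j}(u).
\]
On the other hand, let $Y$ be the $(M+1)\times(M+1)$ submatrix of $S^{\X_{M+N}}(u)$ on rows $1,\dots,M,M+i$ and columns $1,\dots,M,M+j$, with box at its bottom-right entry $s_{M+i,M+j}(u)$; its top-left $M\times M$ block is precisely $S^{[11]}(u)$. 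The definition of the quasi-determinant, combined with the $1\times1$ case of the same Schur complement identity (see \cite[\S1.10]{Mol07}), shows that $|Y|_{M+1,M+1}$ equals the displayed expression. Since $|Y|_{M+1,M+1}$ is exactly the boxed quasi-determinant in the statement, this proves the lemma.

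The argument is essentially formal; the two points that demand care are the spectral-parameter bookkeeping — the cancellation $\tfrac N2 - \tfrac{M+N}{2} = -\tfrac M2$ is what produces the shift $u+\tfrac M2$ in the statement — and checking at each step that the matrix being inverted has identity leading coefficient, so that the inversions are legitimate and are correctly intertwined by $\iota_M$, $\varpi_N$ and $\varpi_{M+N}$.
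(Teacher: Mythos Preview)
Your argument is correct. The paper does not supply its own proof of this lemma; it is quoted from \cite[Lemma~2.14.1]{Mol07}, and your computation is precisely the standard one behind that reference: trace $\vartheta_M=\varpi_{M+N}\circ\iota_M\circ\varpi_N$ on the generating matrix to obtain $\vartheta_M\big(s_{ij}(u+\tfrac{M}{2})\big)=\big[\big([\widetilde S(u)]^{[22]}\big)^{-1}\big]_{ij}$, and then identify this with the boxed quasi-determinant via the Schur complement. The paper itself records the intermediate step---that $\vartheta_M$ sends $\tilde s_{ij}(-u-\tfrac{N}{2})$ to $\tilde s_{M+i,M+j}(-u-\tfrac{M+N}{2})$---in the proof of Corollary~\ref{theta}, which is exactly the dual formulation of what you derived.
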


\begin{cor}\label{theta}
The map $\vartheta_M\colon \X_N \longrightarrow \X_{M+N}$ sends 
\[
d_i(u+\tfrac{M}{2})\mapsto d_{M+i}(u), \quad
e_{ij}(u+\tfrac{M}{2})\mapsto e_{M+i,M+j}(u), \quad
f_{ji}(u+\tfrac{M}{2})\mapsto f_{M+j,M+i}(u).
\]
\end{cor}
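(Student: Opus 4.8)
The plan is to deduce Corollary~\ref{theta} directly from Lemma~\ref{lem:theta} by unwinding the quasi-determinant definitions of the Gauss coordinates $d_i, e_{ij}, f_{ji}$ in terms of the $s_{kl}$. First I would observe that the homomorphism $\vartheta_M$ commutes, in a suitable sense, with the formation of quasi-determinants: since $\vartheta_M$ is an algebra homomorphism, it sends $\vert X(u+\tfrac{M}{2})\vert_{ij}$ to $\vert \vartheta_M(X(u+\tfrac{M}{2}))\vert_{ij}$ whenever the relevant entries remain invertible, because $\vert X\vert_{ij} = ((X^{-1})_{ji})^{-1}$ is built out of ring operations and inversion. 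So the content of the corollary is to identify, for each of the three families, the image of the defining quasi-determinant after applying Lemma~\ref{lem:theta} entrywise.

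The key step is then the following identity for nested quasi-determinants. Write, for $1\lle i\lle N$,
\[
\vartheta_M\bigl(d_i(u+\tfrac{M}{2})\bigr)
= \vartheta_M\left(\left\vert \begin{array}{ccc} s_{11}(u+\tfrac{M}{2}) & \cdots & s_{1i}(u+\tfrac{M}{2})\\ \vdots & & \vdots \\ s_{i1}(u+\tfrac{M}{2}) & \cdots & \mybox{$s_{ii}(u+\tfrac{M}{2})$} \end{array}\right\vert\right),
\]
and by the homomorphism property this equals the $(i,i)$-quasi-determinant of the $i\times i$ matrix whose $(a,b)$ entry is $\vartheta_M(s_{ab}(u+\tfrac{M}{2}))$, which by Lemma~\ref{lem:theta} is the $(M+a,M+b)$-quasi-determinant of the bordered $(M+1)\times(M+1)$ matrix $(s_{kl}(u))_{k,l\in\{1,\ldots,M,M+a\}\times\{1,\ldots,M,M+b\}}$. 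Now I invoke the standard "quasi-determinant of quasi-determinants" (hereditary / Sylvester-type) identity for quasi-determinants — available in \cite{GGRW:2005} — which collapses this two-layer quasi-determinant into the single $(M+i,M+i)$-quasi-determinant of $(s_{kl}(u))_{1\lle k,l\lle M+i}$, namely $d_{M+i}(u)$. The cases of $e_{ij}$ and $f_{ji}$ are handled identically: applying Lemma~\ref{lem:theta} entrywise to the defining quasi-determinant of $e_{ij}(u+\tfrac{M}{2})$ (resp. $f_{ji}(u+\tfrac{M}{2})$), combined with the analogous statement $\vartheta_M(\tl d_i(u+\tfrac M2)) = \tl d_{M+i}(u)$ already obtained for the $D$-factor, and then applying the Sylvester identity, yields the quasi-determinant formula for $e_{M+i,M+j}(u)$ (resp. $f_{M+j,M+i}(u)$) read off from the Gauss decomposition of $S(u)$ in $\X_{M+N}$.

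I expect the main obstacle to be twofold: first, being careful that all the quasi-determinants and the inverse $\tl d_i$ that appear are well defined after applying $\vartheta_M$, i.e. that the relevant entries stay invertible in $\X_{M+N}[[u^{-1}]]$ — this follows because the leading term (in $u^{-1}$) of each $d_k(u)$ is $1$, so invertibility is automatic in the power series ring, but it should be stated. Second, and more essentially, pinning down the precise form of the nested-quasi-determinant (Sylvester/Muir-type) identity one needs and checking its hypotheses; the homogeneous identity $d_i = \vert(s_{ab})_{a,b\le i}\vert_{ii}$ together with the "border" structure produced by Lemma~\ref{lem:theta} is exactly the setting of the column/row homological relations for quasi-determinants in \cite[\S1.10]{Mol07}, so the collapse is clean, but the bookkeeping of which rows and columns are being bordered (the first $M$ indices versus the extra $i$ indices) is where care is required. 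A cleaner alternative, which I would present if the quasi-determinant manipulation gets unwieldy, is to argue at the level of the Gauss decomposition itself: since $\vartheta_M$ is an algebra homomorphism, $\vartheta_M(S(u+\tfrac M2)) = \vartheta_M(F(u+\tfrac M2))\,\vartheta_M(D(u+\tfrac M2))\,\vartheta_M(E(u+\tfrac M2))$ is a factorization of $\vartheta_M(S(u+\tfrac M2))$ into a product of (block) lower-unitriangular, diagonal, and upper-unitriangular matrices; one then identifies $\vartheta_M(S(u+\tfrac M2))$, via Lemma~\ref{lem:theta}, with the lower-right $(N\times N)$-block of a quasi-determinant expression that is itself the trailing block of the Gauss decomposition of $S(u)$ in $\X_{M+N}$, and uses uniqueness of the Gauss decomposition to match factors. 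Both routes rest on Lemma~\ref{lem:theta} plus uniqueness/standard quasi-determinant identities, so no genuinely new computation is needed.
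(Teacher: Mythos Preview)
Your approach via the heredity (Sylvester) identity for quasi-determinants is correct, but the paper takes a genuinely different and slicker route. Rather than collapsing nested quasi-determinants, the paper first unwinds the definition $\vartheta_M=\varpi_{M+N}\circ\iota_M\circ\varpi_N$ to see that $\vartheta_M$ sends $\tl s_{ij}(-u-\tfrac{N}{2})$ to $\tl s_{M+i,M+j}(-u-\tfrac{M+N}{2})$, from which it follows at once that these shift maps compose: $\vartheta_{\ell_1}\circ\vartheta_{\ell_2}=\vartheta_{\ell_1+\ell_2}$. But Lemma~\ref{lem:theta} already says (taking the $1\times 1$ case) that $d_i(u)=\vartheta_{i-1}\bigl(s_{11}(u+\tfrac{i-1}{2})\bigr)$, and similarly $e_{ij}(u)=\vartheta_{i-1}\bigl(s_{11}^{-1}s_{1,j-i+1}(u+\tfrac{i-1}{2})\bigr)$ and $f_{ji}(u)=\vartheta_{i-1}\bigl(s_{j-i+1,1}s_{11}^{-1}(u+\tfrac{i-1}{2})\bigr)$. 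Applying $\vartheta_M$ and using $\vartheta_M\circ\vartheta_{i-1}=\vartheta_{M+i-1}$ then gives the corollary in one line. Your route has the advantage of staying entirely inside quasi-determinant algebra and not requiring one to trace $\vartheta_M$ on the entries of $S^{-1}$; the paper's route avoids the Sylvester identity altogether and surfaces the composition law for the $\vartheta_\bullet$, whose underlying observation (the action on $\tl s_{ij}$) is reused in the proof of Lemma~\ref{comsubalg}.
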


\begin{proof}
By tracing the definition $\vartheta_M$ in \eqref{eq:theta-m} through \eqref{varpi} and \eqref{iota}, one sees that $\vartheta_M$ maps $\tl s_{ij}(-u-\frac{N}{2})$ to $\tl s_{M+i,M+j}(-u-\frac{M+N}{2})$. It then follows that 
\begin{align} \label{l12}
    \vartheta_{\ell_1}\circ \vartheta_{\ell_2}=\vartheta_{\ell_1+\ell_2}, \qquad (\ell_1,\ell_2\gge 0); 
\end{align}
here and below it is understood that $N$ is fixed throughout when considering $\vartheta_M$ for various $M$. By Lemma \ref{lem:theta}, we have
\begin{align*}
d_i(u)&=\vartheta_{i-1}\big(s_{11}(u+\tfrac{i-1}{2})\big)\\
e_{ij}(u)&=\vartheta_{i-1}(s_{11}\big(u+\tfrac{i-1}{2})^{-1}s_{1,j-i+1}(u+\tfrac{i-1}{2})\big),\\
f_{ji}(u)&=\vartheta_{i-1}\big(s_{j-i+1,1}(u+\tfrac{i-1}{2})s_{11}(u+\tfrac{i-1}{2})^{-1}\big).
\end{align*}
Then the lemma follows from applying \eqref{l12}.
\end{proof}

Recall the homomorphism $\jmath: \X_M \rightarrow \X_{M+N}$ from \eqref{jmath}.

\begin{lem}\label{comsubalg}
The subalgebras $\jmath(\X_M)$ and $\vartheta_M(\X_N)$ of $\X_{M+N}$ commute with each other.
\end{lem}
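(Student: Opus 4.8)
The strategy is to verify that every generator $s_{ij}(u)$ of $\jmath(\X_M)$, i.e. with $1\lle i,j\lle M$, commutes with every generator of $\vartheta_M(\X_N)$. By Corollary~\ref{theta}, the image $\vartheta_M(\X_N)$ is generated by the Gauss-coordinate series $d_{M+k}(u)$, $e_{M+i,M+j}(u)$, $f_{M+j,M+i}(u)$ for $1\lle i<j\lle N$, $1\lle k\lle N$. By the formulas in Lemma~\ref{lem:theta} (or the component-wise Gauss decomposition \eqref{eq:sij-Gauss}), each such series is a polynomial in the entries $s_{ab}(u)$ with $a,b\gge M+1$ together with $\tl d_{M+k}(u)$; in particular it lies in the subalgebra of $\X_{M+N}$ generated by $\{s_{ab}(u) : a,b\in\{M+1,\dots,M+N\}\}$ (after inverting the $d$'s). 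So it suffices to show
\[
[s_{ij}(u),\, s_{ab}(v)]=0 \qquad \text{whenever } 1\lle i,j\lle M < a,b\lle M+N.
\]
This is purely an index-range statement about the quaternary relation \eqref{quater u}.

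Now I would plug the index ranges into \eqref{quater u}: with $i,j\lle M$ and $k,l\gge M+1$ (relabeling $(k,l)\leftrightarrow(a,b)$), one has $\{i,j\}\cap\{k,l\}=\emptyset$ so $j\ne k$, $i\ne l$. The right-hand side of \eqref{quater u} is
\[
(u+v)\bigl(s_{kj}(u)s_{il}(v)-s_{kj}(v)s_{il}(u)\bigr)
-(u-v)\bigl(s_{ik}(u)s_{jl}(v)-s_{ki}(v)s_{lj}(u)\bigr)
+s_{ki}(u)s_{jl}(v)-s_{ki}(v)s_{jl}(u).
\]
Here every term still involves ``mixed'' entries like $s_{kj}$, $s_{ik}$, $s_{ki}$, $s_{jl}$ with one index $\lle M$ and one $\gge M+1$, so this does not vanish term-by-term; the naive approach fails. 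The correct route is to use instead the equivalent relation \eqref{sts} between $S(v)$ and $\wtl S(v)=S(v)^{-1}$, or rather to exploit Lemma~\ref{stslem}, together with the observation that the matrix entries of $\vartheta_M(\X_N)$ are more naturally expressed via $\wtl S$. Concretely, recall from the proof of Corollary~\ref{theta} that $\vartheta_M$ maps $\tl s_{ij}(-u-\tfrac N2)$ to $\tl s_{M+i,M+j}(-u-\tfrac{M+N}2)$; hence $\vartheta_M(\X_N)$ is generated (as a subalgebra, after suitable inversions) by the entries $\tl s_{ab}(v)$ with $a,b\gge M+1$. So the claim reduces to
\[
[s_{ij}(u),\, \tl s_{ab}(v)]=0 \qquad \text{whenever } i,j\lle M < a,b\lle M+N,
\]
and this is \emph{exactly} Lemma~\ref{stslem}, since $\{i,j\}\cap\{a,b\}=\emptyset$.

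\textbf{Steps, in order.} (1) Using Corollary~\ref{theta} and the quasi-determinant / Gauss-decomposition formulas, show $\vartheta_M(\X_N)\subseteq \langle \tl s_{ab}(v) : M+1\lle a,b\lle M+N\rangle$ inside $\X_{M+N}$ --- i.e. the Gauss coordinates $d_{M+k},e_{M+i,M+j},f_{M+j,M+i}$ are generated by those $\tl s_{ab}$. The cleanest way is to invoke the formula $\wtl S(u)=\wtl E(u)\wtl D(u)\wtl F(u)$ (the Gauss decomposition of the inverse), which expresses $\tl s_{ab}$ in terms of $e,f,\tl d$ and conversely, restricted to the lower-right $N\times N$ block, matches $\vartheta_M(\X_N)$ by Corollary~\ref{theta}; alternatively argue directly that $\vartheta_M(s_{ij}(u+\tfrac M2)^{-1})$ appears as a quasi-minor of $\wtl S$. (2) Likewise $\jmath(\X_M)=\langle s_{ij}(u):1\lle i,j\lle M\rangle$ by \eqref{jmath}. (3) Conclude via Lemma~\ref{stslem}: for $i,j\lle M<a,b\lle M+N$ we have $\{i,j\}\cap\{a,b\}=\emptyset$, hence $[s_{ij}(u),\tl s_{ab}(v)]=0$; since commuting with a generating set of $\vartheta_M(\X_N)$ suffices, $\jmath(\X_M)$ and $\vartheta_M(\X_N)$ commute.

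\textbf{Main obstacle.} Step (1) is the only real content: making precise that the image $\vartheta_M(\X_N)$, which a priori is described by Lemma~\ref{lem:theta} as quasi-determinants of the \emph{original} matrix $S$ (bordered by the first $M$ rows/columns), can equally be generated by entries of the \emph{inverse} matrix $\wtl S$ supported on the last $N$ indices. Once this is in place, the commutation is immediate from Lemma~\ref{stslem} and needs no further computation. A subtlety to watch is that $\wtl S$ has formal-power-series entries and the Gauss coordinates involve the inverses $\tl d_i$; one should note that $\tl d_{M+k}(u)$ lies in the same subalgebra generated by the relevant $\tl s_{ab}$ (indeed $\tl d_i$ is itself a quasi-minor of $\wtl S$), so all inversions stay within the subalgebra generated by $\{\tl s_{ab}: a,b>M\}$, and Lemma~\ref{stslem} applies to all of them.
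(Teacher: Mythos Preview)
Your proposal is correct and, once you arrive at the key reduction, follows essentially the same route as the paper: the paper observes (via the definition $\vartheta_M=\varpi_{M+N}\circ\iota_M\circ\varpi_N$) that $\vartheta_M$ sends $\tl s_{ij}(-u-\tfrac N2)$ to $\tl s_{M+i,M+j}(-u-\tfrac{M+N}2)$, so $\vartheta_M(\X_N)$ is generated by the coefficients of $\tl s_{M+i,M+j}(u)$, and then invokes Lemma~\ref{stslem}. Your Step~(1) is somewhat roundabout in routing through the Gauss coordinates $d_{M+k},e_{M+i,M+j},f_{M+j,M+i}$ before reaching the same conclusion; the direct observation about how $\vartheta_M$ acts on $\tl s_{ij}$ (which you already cite from the proof of Corollary~\ref{theta}) suffices on its own, since the coefficients of $\tl s_{ij}(u)$ generate $\X_N$ just as those of $s_{ij}(u)$ do.
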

\begin{proof}
Note that the map $\vartheta_M$ maps $\tl s_{ij}(-u-\frac{N}{2})$ in $\X_{N}$ to $\tl s_{M+i,M+j}(-u-\frac{M+N}{2})$ in $\X_{M+N}$. Thus $\vartheta_M(\X_N)$ is generated by the coefficients of $\tl s_{M+i,M+j}(u)$ for $1\lle i,j\lle N$. Now the statement follows immediately from Lemma \ref{stslem}.
\end{proof}

As an immediate consequence, we obtain some simple relations among the new generators. 

\begin{cor}\label{cor:commu-d-e-f}
We have $[d_i(u),d_j(u)]=0$ for all $i\ne j$, and
\begin{align*}
&[e_i(u),e_j(v)]=[e_i(u),f_j(v)]=[f_i(u),f_j(v)]=0, &\text{for }|i-j|>1,\\
&[d_i(u),e_{j}(v)]=[d_i(u),f_j(v)]=0,&\text{ for }i\ne j,j+1.
\end{align*}
\end{cor}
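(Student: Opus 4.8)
The plan is to deduce all these commutation relations from the two structural facts already in hand: Corollary~\ref{theta}, which realizes $d_i, e_i, f_i$ as images under $\vartheta_{i-1}$ of the rank-one generators $s_{11}$, $s_{11}^{-1}s_{12}$, $s_{21}s_{11}^{-1}$ of $\X_{N-i+1}$ (suitably shifted), together with the semigroup property \eqref{l12}; and Lemma~\ref{comsubalg}, which says that $\jmath(\X_M)$ and $\vartheta_M(\X_N)$ commute inside $\X_{M+N}$. The strategy in each case is to exhibit the two generating series as living in a pair of commuting subalgebras of the form $\jmath(\X_M)$ and $\vartheta_M(\X_{N'})$ for an appropriate choice of $M$.

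First I would record the easy half: since $d_i(u)$, $e_{i,i+1}(u)$ and $f_{i+1,i}(u)$ are, by the component-wise Gauss decomposition \eqref{eq:sij-Gauss} or more cleanly by the quasi-determinant formulas, polynomials in the coefficients of $s_{ab}(u)$ with $a,b \le \max(i+1, \dots)$ — in fact $d_i$ only involves indices $\le i$, while $e_i, f_i$ involve indices $\le i+1$ — any generator indexed near $i$ lies in $\jmath(\X_{i+1})$ (the subalgebra on the first $i+1$ rows and columns). Dually, by Corollary~\ref{theta}, $d_j(u)$, $e_j(u)$, $f_j(u)$ for $j \ge i+1$ lie in $\vartheta_{i}(\X_{N-i})$, which by Corollary~\ref{theta} is generated by the coefficients of $d_{i+1+*}$, $e_{i+1+*}$, $f_{i+1+*}$, i.e.\ by the Gauss generators on rows/columns $\ge i+1$. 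So: for $|i-j| > 1$, say $j \ge i+2$, the pair $(e_i, f_i)$ sits in $\jmath(\X_{i+1})$ while $(e_j, f_j)$ sits in $\vartheta_{i+1}(\X_{N-i-1})$ — here I use $\vartheta_{i+1}$ rather than $\vartheta_i$, which is legitimate since for $j\ge i+2$ the generators $e_j,f_j$ involve only rows/columns $\ge i+2$ — and Lemma~\ref{comsubalg} with $M = i+1$ gives the first line. For the second line, $[d_i(u), e_j(v)] = [d_i(u), f_j(v)] = 0$ when $i \ne j, j+1$: if $i < j$ then $d_i \in \jmath(\X_j)$ while $e_j, f_j \in \vartheta_j(\X_{N-j})$ (they involve only indices $\ge j+1$), so Lemma~\ref{comsubalg} with $M = j$ applies; if $i > j+1$ then $e_j, f_j \in \jmath(\X_{j+1})$ while $d_i \in \vartheta_{j+1}(\X_{N-j-1})$, and Lemma~\ref{comsubalg} with $M = j+1$ applies. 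Finally $[d_i(u), d_j(u)] = 0$ for $i \ne j$: taking $i < j$, $d_i \in \jmath(\X_{j-1})$ and $d_j \in \vartheta_{j-1}(\X_{N-j+1})$ — indeed $d_j = \vartheta_{j-1}(s_{11}(u + \tfrac{j-1}{2}))$ by Corollary~\ref{theta} — so they commute by Lemma~\ref{comsubalg} with $M = j-1$; note the stated identity is at equal spectral parameter, which is the special case $v = u$ of the series commuting.

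The one point that needs a little care, and which I expect to be the main (mild) obstacle, is bookkeeping the precise index ranges: one must be sure that $e_j(u), f_j(u)$ for $j$ in the relevant range really do lie in the image of the shifted-index embedding $\iota_M$ (equivalently in $\jmath$ of a bottom-right block, after applying $\vartheta$), which amounts to checking that the quasi-determinant defining $e_{ij}(u)$ uses only rows and columns with indices $\le \max(i,j)$ and $\ge \min(\text{relevant})$. This is visible directly from the displayed quasi-determinant formulas, but it is worth stating explicitly, perhaps as a preliminary remark, that $d_i(u) \in \jmath(\X_i)$ and $e_i(u), f_i(u) \in \jmath(\X_{i+1})$, and dually via $\vartheta$ that $d_i(u), e_{i'}(u), f_{i'}(u) \in \vartheta_{M}(\X_{N-M})$ for $M < i$ and $i' \ge M+1$ — after which every case above is an instance of Lemma~\ref{comsubalg}. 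I would write the proof as a short case analysis organized exactly along the four displayed relations, invoking Corollary~\ref{theta} and Lemma~\ref{comsubalg} in each case with the value of $M$ indicated above.
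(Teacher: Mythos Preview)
Your approach is exactly the paper's: the corollary is stated as an immediate consequence of Lemma~\ref{comsubalg} (together with Corollary~\ref{theta}), and your case analysis just makes that explicit.

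There is one bookkeeping slip. In the case $i<j$ for $[d_i(u),e_j(v)]=[d_i(u),f_j(v)]=0$, you assert that $e_j,f_j\in\vartheta_j(\X_{N-j})$ on the grounds that ``they involve only indices $\ge j+1$''. But $e_j=e_{j,j+1}$ and $f_j=f_{j+1,j}$ involve the index $j$, and by Corollary~\ref{theta} the image of $\vartheta_j$ contains only $e_{ab},f_{ba}$ with $a,b\ge j+1$. The fix is simply to take $M=j-1$ instead: then $d_i\in\jmath(\X_{j-1})$ since $i\le j-1$, while $e_j,f_j\in\vartheta_{j-1}(\X_{N-j+1})$ by Corollary~\ref{theta}, and Lemma~\ref{comsubalg} applies. (This also covers the borderline case $i=j-1$, which your choice $M=j$ would have handled anyway but for the wrong reason.) With that correction the argument is complete.
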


Recall the anti-involution $\eta$ from \eqref{eta}.
\begin{lem}\label{lem:tau}
For $1\lle i<j\lle N$ and $\ 1\lle k\lle N$, we have
\[
\eta\big(e_{ij}(u)\big)=f_{ji}(u),\qquad \eta\big(f_{ji}(u)\big)=e_{ij}(u),\qquad \eta\big(d_{k}(u)\big)=d_{k}(u).
\]
\end{lem}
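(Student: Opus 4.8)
The plan is to exploit the fact that $\eta$ is an anti-automorphism fixing all $s_{ij}(u)$ up to transpose, together with the uniqueness of the Gauss decomposition. First I would record how $\eta$ interacts with the matrix $S(u)$: by \eqref{eta} we have $\eta(S(u)) = S^t(u)$, where here $\eta$ is applied entrywise. Since $\eta$ is an anti-automorphism, applying it to the identity $S(u) = F(u) D(u) E(u)$ and reversing the order of the factors gives
\[
S^t(u) = \eta\big(E(u)\big)\, \eta\big(D(u)\big)\, \eta\big(F(u)\big),
\]
where again $\eta$ acts on matrices entrywise. Now $\eta(E(u))$ is a lower-triangular matrix with $1$'s on the diagonal (since $E(u)$ is upper-triangular unipotent and entrywise application of $\eta$ does not change which entries are nonzero), $\eta(D(u))$ is diagonal, and $\eta(F(u))$ is upper-triangular unipotent. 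On the other hand, transposing the Gauss decomposition $S(u) = F(u)D(u)E(u)$ directly gives $S^t(u) = E^t(u) D^t(u) F^t(u) = E^t(u) D(u) F^t(u)$, which is also a product (lower unipotent)(diagonal)(upper unipotent).

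The key step is then uniqueness: the Gauss decomposition of $S^t(u)$ into (lower unipotent)(diagonal)(upper unipotent) is unique (this is the content of \cite[Theorem 4.96]{GGRW:2005} invoked earlier, applied to $S^t(u)$, whose quasi-determinant conditions hold because those for $S(u)$ do and quasi-determinants behave well under transpose). Comparing the two factorizations of $S^t(u)$ forces
\[
\eta\big(E(u)\big) = E^t(u), \qquad \eta\big(D(u)\big) = D(u), \qquad \eta\big(F(u)\big) = F^t(u),
\]
as matrix identities. Reading off matrix entries: the $(j,i)$-entry of $\eta(E(u))$ is $\eta(e_{ij}(u))$ for $i<j$, and the $(j,i)$-entry of $E^t(u)$ is the $(i,j)$-entry of $E(u)$... wait, the $(j,i)$-entry of $E^t(u)$ equals the $(i,j)$-entry of $E(u)$ which is $e_{ij}(u)$ — but we want $f_{ji}(u)$. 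Here I must be careful: the correct comparison is that $\eta(F(u)D(u)E(u))$, when reorganized, has its lower-triangular unipotent factor equal to the transpose of the upper-triangular unipotent factor of $S(u) = F(u)D(u)E(u)$, i.e. $\eta(E(u))_{\text{as lower factor}}$ matches $E^t$ in position but its entries are $\eta(e_{ij})$; uniqueness against $S^t = E^t D F^t$ gives $\eta(e_{ij}(u)) = (E^t)_{ji} = e_{ij}(u)$?? — no. Let me restate cleanly: from $S^t(u) = \eta(E(u)) \eta(D(u)) \eta(F(u)) = E^t(u) D(u) F^t(u)$ and uniqueness, $\eta(E(u)) = E^t(u)$ is wrong since $\eta(E)$ is lower-triangular while $E^t$ is also lower-triangular — good, these match in shape. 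So $\eta(E(u)) = E^t(u)$: entry $(j,i)$ with $j>i$ gives $\eta(e_{ij}(u))$ on the left and $(E^t)_{ji} = E_{ij} = e_{ij}(u)$ on the right — this would wrongly give $\eta(e_{ij}) = e_{ij}$. The resolution is that I should instead compare $S(u)^t$ obtained by transposing with $\eta$ applied: one must use $F^t$ and $E^t$ consistently, and in fact the clean statement comes from observing $\eta(F(u)) = F^t(u)$ has entry $(i,j)$, $i<j$, equal to $\eta(f_{ji}(u))$ on the left and $(F^t)_{ij} = F_{ji} = f_{ji}(u)$ on the right. The genuinely correct route, and the one I would execute carefully, is: $S^t(u) = \eta(E(u))\eta(D(u))\eta(F(u))$; this is a Gauss decomposition of $S^t(u)$; but also, transposing the quasi-determinant formulas (or directly) shows the Gauss decomposition of $S^t(u)$ is governed by the same algebra with the roles of $E$ and $F$ swapped, and matching the diagonal factors first gives $\eta(D(u)) = \wtl{}$ wait — the diagonal parts must agree: $\eta(D(u)) = (\text{diagonal factor of } S^t)$.

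\emph{Cleaner reorganization.} I would argue: apply $\eta$ to each component-wise formula. The quasi-determinant defining $d_k(u)$ is a polynomial in the $s_{ab}(u)$ with $a,b \le k$; since $\eta$ reverses products and sends $s_{ab}(u) \mapsto s_{ba}(u)$, and since the quasi-determinant $|X|_{kk}$ of a matrix $X=(s_{ab})_{a,b\le k}$ satisfies $|X^t|_{kk} = |X|_{kk}$ (a standard property of quasi-determinants, see \cite[\S1.10]{Mol07}), one gets directly $\eta(d_k(u)) = d_k(u)$. Similarly, the quasi-determinant formula for $e_{ij}(u)$ involves $\tl d_i(u)$ times $|Y|$ where $Y$ has last column $(s_{1j},\dots,s_{ij})^t$ and other columns from the principal submatrix; transposing this matrix turns the last column into the last row and realizes exactly the quasi-determinant in the formula for $f_{ji}(u)$, up to placement of the $\tl d_i(u)$ factor which $\eta$ moves to the other side — yielding $\eta(e_{ij}(u)) = f_{ji}(u)$; since $\eta$ is an involution, $\eta(f_{ji}(u)) = e_{ij}(u)$ follows immediately. \textbf{The main obstacle} is getting the combinatorics of the quasi-determinant transpose exactly right — specifically verifying that $\eta$ applied to the noncommutative quasi-determinant expression for $e_{ij}(u)$, with products reversed, reproduces the quasi-determinant for $f_{ji}(u)$ and not its ``reverse'' — and handling the $\tl d_i(u) = d_i(u)^{-1}$ prefactor under the anti-automorphism (noting $\eta(d_i(u)^{-1}) = d_i(u)^{-1}$ by the diagonal case, and that $\eta$ moves it from left of the quasi-determinant to its right, matching the $f_{ji}(u)$ formula). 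This is where I would spend the bulk of the verification, using either the uniqueness of Gauss decomposition for $S^t(u)$ (the slicker argument, once the shape-matching is set up correctly) or the explicit transpose-invariance of quasi-determinants applied term by term.
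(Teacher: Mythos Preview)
The paper states this lemma without proof, so there is nothing to compare against directly; the intended argument is the standard uniqueness-of-Gauss-decomposition one you are reaching for. Your overall strategy is right, and your ``cleaner reorganization'' via quasi-determinants would also go through. But the first, matrix-level argument contains a genuine error that is the source of all your confusion, and it is worth naming precisely.

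The mistake is the line
\[
S^t(u) = \eta\big(E(u)\big)\, \eta\big(D(u)\big)\, \eta\big(F(u)\big),
\]
where you treat entrywise application of $\eta$ as an anti-automorphism of the matrix algebra. Over a noncommutative ring this is false: if $\eta$ acts entrywise then $\eta(AB)_{ij}=\sum_k \eta(B_{kj})\eta(A_{ik})$, which is not the $(i,j)$-entry of $\eta(B)\eta(A)$. (Your companion identity $S^t(u)=E^t(u)D(u)F^t(u)$ is wrong for the same reason: transpose alone does not reverse products of matrices with noncommuting entries.) The correct anti-automorphism of the matrix algebra is $\tilde\eta(M):=\eta(M)^t$, i.e.\ $\tilde\eta(M)_{ij}=\eta(M_{ji})$; one checks directly that $\tilde\eta(AB)=\tilde\eta(B)\tilde\eta(A)$. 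Since $\eta(S(u))=S^t(u)$, we get $\tilde\eta(S(u))=S(u)$, and hence
\[
S(u)=\tilde\eta\big(F(u)D(u)E(u)\big)=\tilde\eta\big(E(u)\big)\,\tilde\eta\big(D(u)\big)\,\tilde\eta\big(F(u)\big).
\]
Now $\tilde\eta(E(u))$ is lower unipotent with $(j,i)$-entry $\eta(e_{ij}(u))$, $\tilde\eta(D(u))$ is diagonal with $(k,k)$-entry $\eta(d_k(u))$, and $\tilde\eta(F(u))$ is upper unipotent with $(i,j)$-entry $\eta(f_{ji}(u))$. Uniqueness of the Gauss decomposition of $S(u)$ itself then gives $F(u)=\tilde\eta(E(u))$, $D(u)=\tilde\eta(D(u))$, $E(u)=\tilde\eta(F(u))$, and reading off entries yields exactly $f_{ji}(u)=\eta(e_{ij}(u))$, $d_k(u)=\eta(d_k(u))$, $e_{ij}(u)=\eta(f_{ji}(u))$. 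With this one correction your slicker argument goes through in two lines, and the quasi-determinant detour is unnecessary.
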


Recall the automorphism $\zeta_N$ from \eqref{zeta} and $\tl e_{ij}(u), \tl f_{ij}(u)$ from \eqref{edfinv}--\eqref{eq:def-tilde-e-f}.
\begin{lem}\label{zetamap}
For $1\lle i<j\lle N$ and $\ 1\lle k\lle N$, we have
\[
\zeta_N(e_{ij}(u))=\tl f_{i'j'}(-u-\tfrac{N}{2}),\quad \zeta_N(f_{ji}(u))=\tl e_{j'i'}(-u-\tfrac{N}{2}),\quad \zeta_N\big(d_{k}(u)\big)=\tl d_{k'}(-u-\tfrac{N}{2}).
\]
In particular, $\zeta_N\big(e_{i}(u)\big)=-f_{N-i}(-u-\tfrac{N}{2})$ and $
\zeta_N\big(f_{i}(u)\big)=-e_{N-i}(-u-\tfrac{N}{2})$.
\end{lem}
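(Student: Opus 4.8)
The plan is to leverage the two automorphisms composing $\zeta_N$, namely $\rho_N$ and $\varpi_N$, and to handle $\varpi_N$ first since it is the substantive one. Recall $\varpi_N\colon S(u)\mapsto \wtl S(-u-\tfrac N2)$. The key observation is that the Gauss decomposition is compatible with matrix inversion in a transparent way: from $S(u)=F(u)D(u)E(u)$ we get $\wtl S(u)=S(u)^{-1}=\wtl E(u)\wtl D(u)\wtl F(u)$, which is the Gauss decomposition of the \emph{lower--upper--diagonal} type rather than the standard one. To convert this into a statement about the standard Gauss generators one uses the symmetry that reverses the order of indices, i.e.\ conjugation by the anti-diagonal permutation matrix, which is exactly what $\rho_N$ implements (recall $\rho_N\colon s_{ij}(u)\mapsto s_{i'j'}(u)$, $i'=N+1-i$). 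Conjugating a lower-triangular matrix by the anti-diagonal permutation turns it into an upper-triangular matrix with reversed indices, and vice versa; the diagonal $D(u)$ has its entries reversed. So the composite $\zeta_N=\rho_N\circ\varpi_N$ should send the standard Gauss data of $S(u)$ to the standard Gauss data of $S(-u-\tfrac N2)^{-1}$, read off with reversed indices.

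Concretely, first I would record that $\varpi_N$ sends $F(u)D(u)E(u)$ to $\wtl E(-u-\tfrac N2)\,\wtl D(-u-\tfrac N2)\,\wtl F(-u-\tfrac N2)$, and then apply $\rho_N$. Since $\rho_N$ acts on $\End(\bC^N)$ by conjugation with the permutation $i\mapsto i'$, it carries the strictly-upper matrix $\wtl E(v)=\sum_{i<j}E_{ij}\otimes\tl e_{ij}(v)$ to $\sum_{i<j}E_{j'i'}\otimes\tl e_{ij}(v)=\sum_{a>b}E_{ab}\otimes \tl e_{b'a'}(v)$, which is strictly lower-triangular; similarly $\wtl F(v)$ becomes strictly upper-triangular with entries $\tl f_{b'a'}(v)$ in position $(a,b)$ with $a<b$, and $\wtl D(v)$ becomes $\sum_a E_{aa}\otimes\tl d_{a'}(v)$. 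By uniqueness of the Gauss decomposition of $\zeta_N(S(u))=\rho_N\big(\wtl S(-u-\tfrac N2)\big)$, matching the three factors gives $\zeta_N(f_{ji}(u))=\tl e_{j'i'}(-u-\tfrac N2)$, $\zeta_N(e_{ij}(u))=\tl f_{i'j'}(-u-\tfrac N2)$, and $\zeta_N(d_k(u))=\tl d_{k'}(-u-\tfrac N2)$, which is exactly the claim. The ``in particular'' statement then follows by specializing $j=i+1$ and using the expansions \eqref{eq:def-tilde-e-f}: the length-one term in $\tl f_{i'(i+1)'}$ is $-f_{(i+1)'i'}=-f_{N-i}$ (note $(i+1)'=N-i$ and $i'=N-i+1$, so $\tl f_{(N-i+1)(N-i)}=-f_{(N-i+1)(N-i)}=-f_{N-i}$), and likewise $\tl e_{(i+1)'i'}=-e_{N-i}$; since $\zeta_N$ is an algebra map these identities of series hold on the nose.

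Alternatively, and perhaps more cleanly, one can derive everything from Corollary \ref{theta} together with the identity $\vartheta_{i-1}$ already used there, replacing the ad hoc permutation bookkeeping by the functorial statement that $\zeta_N$ is built from $\varpi_N$. The main obstacle I anticipate is purely notational: keeping the primes, the shift $-u-\tfrac N2$ (and the intermediate shift $-u-\tfrac M2$ if one routes through $\vartheta_M$), and the order-reversal of the nested sums in \eqref{eq:def-tilde-e-f} all consistent, so that the claimed signs and index pairs come out correctly. The underlying mathematical content — uniqueness of Gauss decomposition plus the behavior of triangular matrices under anti-diagonal conjugation and inversion — is routine; the care is entirely in the indices.
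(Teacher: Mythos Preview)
Your argument is correct and follows essentially the same route as the paper: both proofs hinge on the observation that $\wtl S(v)=\wtl E(v)\wtl D(v)\wtl F(v)$ becomes, after the index reversal $i\mapsto i'$, an ordinary lower--diagonal--upper factorization, and then invoke uniqueness of the Gauss decomposition. The only cosmetic difference is that you package the index reversal as conjugation by the anti-diagonal permutation matrix, whereas the paper writes out the component formulas for $\tl s_{ij}(u)$ explicitly and matches them against \eqref{eq:sij-Gauss}; the content is identical.
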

\begin{proof}
Multiplying out the product $T(u)^{-1}=E(u)^{-1}D(u)^{-1}F(u)^{-1}$, we have
\begin{align*}
\tl s_{ii}(u)&=\tl d_i(u)+\sum_{k>i}\tl e_{ik}(u)\tl d_k(u)\tl f_{ki}(u),\\
\tl s_{ij}(u)&=\tl e_{ij}(u)\tl d_j(u)+\sum_{k>j}\tl e_{ik}(u)\tl d_k(u)\tl f_{kj}(u),\\
\tl s_{ji}(u)&=\tl d_j(u)\tl f_{ji}(u)+\sum_{k>j}\tl e_{jk}(u)\tl d_k(u)\tl f_{ki}(u).
\end{align*}
Note that $\zeta_N$ is an automorphism and $\zeta_N(s_{ij}(u))=\tl s_{i'j'}(-u-\frac{N}{2})$, the claim follows from the uniqueness of Gauss decomposition by comparing the above formulas with \eqref{eq:sij-Gauss}.
\end{proof}

\begin{lem}\label{lem:ei-generate-eij}
For $1\lle i<j< N$, we have
\beq\label{eq:ei-generate-eij}
e_{i,j+1}(u)=[e_{ij}(u),e_{j,j+1}^{(1)}],\qquad f_{j+1,i}(u)=[f_{j+1,j}^{(1)},f_{ji}(u)].
\eeq
In particular, the algebra $\Y_N$ \emph{(}resp. $\X_N$\emph{)} is generated by coefficients of $d_i(u)$, $e_j(u)$, and $f_j(u)$, for $1\lle i\lle N$ and $1\lle j<N$.
\end{lem}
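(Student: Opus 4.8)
The plan is to establish the two identities in \eqref{eq:ei-generate-eij} first, and then deduce the generation statement by a straightforward induction on $j-i$. For the first identity, I would start from the component-wise Gauss decomposition \eqref{eq:sij-Gauss}, which expresses $s_{ij}(u)$ for $i<j$ in terms of $d_k(u)$, $e_{kj}(u)$, and $f_{ik}(u)$ with $k\lle i$. The idea is to feed this into the known relation \eqref{eq:zero-mode-s}, namely $[s_{ij}(u),s_{j,j+1}^{(1)}]=s_{i,j+1}(u)$ for $i<j$, and to extract the leading (highest) term of the Gauss decomposition, i.e. the $k=i$ contribution $d_i(u)e_{ij}(u)$. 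Concretely, I would argue by induction on $i$: for $i=1$ we have $s_{1j}(u)=d_1(u)e_{1j}(u)$, and applying $\mathrm{ad}\, s_{j,j+1}^{(1)}$ together with the fact (from Corollary~\ref{cor:commu-d-e-f}, or more directly from the low-rank quaternary relations/Lemma~\ref{stslem}) that $s_{j,j+1}^{(1)}$ commutes with $d_1(u)$, one gets $d_1(u)[e_{1j}(u),s_{j,j+1}^{(1)}]=s_{1,j+1}(u)=d_1(u)e_{1,j+1}(u)$, hence $[e_{1j}(u),e_{j,j+1}^{(1)}]=e_{1,j+1}(u)$ after noting $s_{j,j+1}^{(1)}=d_j^{(1)}e_{j,j+1}^{(1)}+(\text{lower})$ only contributes through its $e$-part when commuted into $e$'s — this bookkeeping is where care is needed. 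For general $i$, subtract the inductively-known lower terms $\sum_{k<i}f_{ik}(u)d_k(u)e_{kj}(u)$ from $s_{ij}(u)$, apply $\mathrm{ad}\, s_{j,j+1}^{(1)}$ to both sides, and match the $k=i$ term on the right-hand side of $s_{i,j+1}(u)$'s expansion. The second identity follows by applying the anti-involution $\eta$ and invoking Lemma~\ref{lem:tau}, which swaps $e_{ij}(u)\leftrightarrow f_{ji}(u)$ and fixes $d_k(u)$; since $\eta$ reverses products, $\eta([e_{ij}(u),e_{j,j+1}^{(1)}])=[f_{j+1,j}^{(1)},f_{ji}(u)]$, giving exactly $f_{j+1,i}(u)=[f_{j+1,j}^{(1)},f_{ji}(u)]$.

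An alternative, cleaner route to \eqref{eq:ei-generate-eij} is via the embedding $\vartheta_{i-1}$ and Corollary~\ref{theta}: since $e_{ij}(u)=\vartheta_{i-1}\big(s_{11}(u+\tfrac{i-1}{2})^{-1}s_{1,j-i+1}(u+\tfrac{i-1}{2})\big)$ and $e_{ij}^{(1)}$-type zero modes also pull back compatibly, it suffices to prove the relation $e_{1,m+1}(u)=[e_{1m}(u),e_{m,m+1}^{(1)}]$ inside $\X_{N-i+1}$ and then apply $\vartheta_{i-1}$, which is an algebra homomorphism. Inside that smaller algebra the relation $s_{1m}(u)=d_1(u)e_{1m}(u)$ is exact (no lower terms), so the computation above becomes genuinely clean: one only needs that $s_{m,m+1}^{(1)}$ commutes with $d_1(u)=s_{11}(u)$ — which holds because $\{1\}\cap\{m,m+1\}=\emptyset$ by Lemma~\ref{stslem} applied to $[s_{11}(u),\tl s_{m,m+1}(v)]=0$, together with $s_{m,m+1}^{(1)}$ being expressible through the $\tl s$'s at leading order — and that $e_{m,m+1}^{(1)}$ and $s_{m,m+1}^{(1)}$ have the same image in the relevant quotient so that $[e_{1m}(u),s_{m,m+1}^{(1)}]=[e_{1m}(u),e_{m,m+1}^{(1)}]$. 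I expect this to be the version actually used.

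Once \eqref{eq:ei-generate-eij} is in hand, the generation statement is immediate: by induction on $j-i\gge 1$, every $e_{ij}(u)$ lies in the subalgebra generated by the coefficients of $e_1(u),\dots,e_{N-1}(u)$, since the base case $j=i+1$ is definitional and the inductive step writes $e_{i,j+1}(u)$ as a commutator of $e_{ij}(u)$ (known by induction) with $e_{j,j+1}^{(1)}=e_j^{(1)}$. Symmetrically all $f_{ji}(u)$ lie in the subalgebra generated by the $f_j(u)$. Since $S(u)=F(u)D(u)E(u)$ expresses every $s_{ij}(u)$ as a polynomial in the entries $d_k(u)$, $e_{ab}(u)$, $f_{ba}(u)$, and these generators of $\Y_N$ (resp.\ $\X_N$) are therefore all contained in the subalgebra generated by the coefficients of $d_i(u)$ ($1\lle i\lle N$), $e_j(u)$, $f_j(u)$ ($1\lle j<N$), that subalgebra is all of $\Y_N$ (resp.\ $\X_N$).

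The main obstacle I anticipate is the bookkeeping in the first approach: when one applies $\mathrm{ad}\, s_{j,j+1}^{(1)}$ to the full Gauss-decomposition expansion of $s_{ij}(u)$, the terms $f_{ik}(u)d_k(u)e_{kj}(u)$ with $k<i$ produce, via Leibniz, both the desired $f_{ik}(u)d_k(u)e_{k,j+1}(u)$ (by the inductive hypothesis applied to $e_{kj}$) and potentially unwanted contributions from $[s_{j,j+1}^{(1)},f_{ik}(u)]$ and $[s_{j,j+1}^{(1)},d_k(u)]$; one must check these vanish or recombine correctly, which ultimately reduces to commutation facts provable from Lemma~\ref{stslem} and the zero-mode relations \eqref{eq:zero-mode-s}. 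The embedding approach sidesteps most of this, so I would present that one, and the remaining subtlety there is simply verifying that $e_{m,m+1}^{(1)}$ can be replaced by $s_{m,m+1}^{(1)}$ inside the bracket with $e_{1m}(u)$ — a consequence of the difference $s_{m,m+1}^{(1)}-e_{m,m+1}^{(1)}$ lying in the span of products involving $f$'s and $d$'s with indices $\lle m$, all of which commute with $e_{1m}(u)$ for index-support reasons.
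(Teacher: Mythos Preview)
Your first approach is essentially the paper's own argument: induction on $i$, feeding the Gauss expansion \eqref{eq:sij-Gauss} into \eqref{eq:zero-mode-s}, then cancelling the invertible $d_i(u)$ after verifying that the lower terms match. Two places where you are overcomplicating things. First, $e_{j,j+1}^{(1)}=s_{j,j+1}^{(1)}$ holds \emph{exactly}, not merely ``up to bookkeeping'': in $s_{j,j+1}(u)=d_j(u)e_{j,j+1}(u)+\sum_{k<j}f_{jk}(u)d_k(u)e_{k,j+1}(u)$ every tail summand starts at $u^{-2}$, so the $u^{-1}$ coefficient is just $e_{j,j+1}^{(1)}$. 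The paper states this in one line. Second, for the vanishing of $[e_{j,j+1}^{(1)},d_k(u)]$, $[e_{j,j+1}^{(1)},d_i(u)]$ and $[e_{j,j+1}^{(1)},f_{ik}(u)]$ (with $k<i<j$) the paper invokes Lemma~\ref{comsubalg} rather than Lemma~\ref{stslem}: the quasi-determinant expressions for $d_k(u),d_i(u),f_{ik}(u)$ involve only $s_{ab}(u)$ with $a,b\lle i<j$, so they lie in $\jmath(\X_{j-1})$, while $e_{j,j+1}^{(1)}\in\vartheta_{j-1}(\X_{N-j+1})$ by Corollary~\ref{theta}. This is the clean way to kill those commutators, and it handles the ``main obstacle'' you flagged in one stroke.

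Your alternative route via $\vartheta_{i-1}$ is valid and does reduce everything to the $i=1$ case (which is exactly the paper's base case), but contrary to your guess it is \emph{not} the version the paper uses; the paper runs the full induction on $i$. The $\eta$ argument for the $f$ identity and the closing generation argument are exactly as the paper does them.
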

\begin{proof}
We prove the first identity by induction on $i$. Note that $e_{j,j+1}^{(1)}=s_{j,j+1}^{(1)}$. By \eqref{eq:sij-Gauss} and \eqref{eq:zero-mode-s}, we have
\[
\big[d_1(u)e_{1j}(u),e_{j,j+1}^{(1)}\big]=d_1(u)e_{1,j+1}(u).
\]
The base case $i=1$ follows from this and the fact that the (invertible) $d_1(u)$ commutes with $e_{j,j+1}^{(1)}$.

Again by \eqref{eq:sij-Gauss} and \eqref{eq:zero-mode-s}, we have
\begin{align} \label{expand ss}
\Big[d_i(u)e_{ij}(u)+\sum_{k<i}f_{ik}(u)d_k(u)e_{kj}(u),e_{j,j+1}^{(1)}\Big]=d_i(u)e_{i,j+1}(u)+\sum_{k<i}f_{ik}(u)d_k(u)e_{k,j+1}(u).
\end{align}
By Lemma \ref{comsubalg}, for $k<i<j$, we have
\[
[d_i(u),e_{j,j+1}^{(1)}]=[d_k(u),e_{j,j+1}^{(1)}]=[f_{ik}(u),e_{j,j+1}^{(1)}]=0.
\]
By these identities together with the induction hypothesis $[e_{kj}(u),e_{j,j+1}^{(1)}]=e_{k,j+1}(u)$, we simplify \eqref{expand ss} to become
$
d_i(u)[e_{ij}(u),e_{j,j+1}^{(1)}]=d_i(u)e_{i,j+1}(u),
$ 
proving the desired identity in \eqref{eq:ei-generate-eij}. 

The second identity in \eqref{eq:ei-generate-eij} follows from the first one by applying the anti-automorphism $\eta$. 
\end{proof}

\subsection{Special twisted Yangians}
Let $g(u)$ be any even formal power series in $u^{-1}$ with leading term $1$,
\[
g(u)=1+g_1u^{-2}+g_2u^{-4}+\cdots\in \bC[[u^{-2}]].
\]
There is an automorphism of $\Y_N$ defined by
\beq\label{eq:mu_g-twisted}
\nu_{g(u)}:\Y_N \longrightarrow \Y_N, \qquad S(u)\mapsto g(u)S(u).
\eeq

Recall the Yangian $\rY(\mathfrak{sl}_N)$ from \S\ref{subsec:Y}. The \textit{special twisted Yangian} $\SY_N$ is by definition the subalgebra of $\Y_N$:
\[
\SY_N=\rY(\mathfrak{sl}_N)\cap \Y_N.
\]
Alternatively, $\SY_N$ is the subalgebra of $\Y_N$ which consists of the elements that are stable under all automorphisms of the form \eqref{eq:mu_g-twisted}.

Define $\sdet\, S(u)=d_1(u)d_2(u-1)\cdots d_N(u-N+1)$ and denote by $\mathscr{ZY}_N$ the center of $\Y_N$. Set
\beq\label{sdet}
\sdet\, S(u)=1+\sum_{r\gge 1} c_r u^{-r}.
\eeq

\begin{prop}\label{prop:center}
\cite[Theorems 2.8.2, 2.9.2, 2.12.1]{Mol07}
\qquad
\begin{enumerate}
    \item 
The coefficients $c_2,c_4,\ldots$ of $\sdet\, S(u)$ are algebraically independent generators of the center $\mathscr{ZY}_N$ of $\Y_N$.

\item 
We have an algebra isomorphism
$\Y_N\cong \mathscr{ZY}_N\otimes \SY_N.$
Moreover, the center of $\SY_N$ is trivial.
\end{enumerate}
\end{prop}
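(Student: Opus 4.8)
The plan is to produce $\mathscr{ZY}_N$ explicitly from the Sklyanin determinant $\sdet\,S(u)$ and then extract everything by passing to the associated graded algebra $\gr\,\Y_N\cong\mathrm{U}(\gl_N[z]^\theta)$, using the scaling automorphisms $\nu_{g(u)}$ of \eqref{eq:mu_g-twisted} to provide the tensor splitting. First I would verify that every coefficient $c_r$ in \eqref{sdet} is central: realizing $\sdet\,S(u)$ as the scalar by which the $N$-fold fused product of $S(u)$ (interlaced with copies of the $R$-matrix) acts on the antisymmetrizer $A_N\in\End((\bC^N)^{\otimes N})$, centrality is the standard fusion consequence of the quaternary relation \eqref{quat-mat}, the Yang--Baxter equation, and the rank-one property of $A_N$; the equality $\sdet\,S(u)=d_1(u)d_2(u-1)\cdots d_N(u-N+1)$ is a quasideterminant computation. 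Inserting the symmetry relation \eqref{sym-mat} into the fused expression then yields a functional equation $\sdet\,S(u)=\sdet\,S(-u+N-1)$ up to an explicit scalar factor with constant term $1$, which forces each odd $c_{2k-1}$ to be a polynomial in $c_2,\dots,c_{2k-2}$; hence $\bC[c_r:r\ge1]=\bC[c_{2r}:r\ge1]$, and it suffices to treat the even coefficients.

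For part (1) I would pass to $\gr\,\Y_N\cong\mathrm{U}(\gl_N[z]^\theta)$. Here $\gl_N[z]^\theta$ splits as $\mathfrak{sl}_N[z]^\theta\oplus\bigoplus_{r\ \mathrm{odd}}\bC\,Iz^r$ with the second summand central and abelian, and the symbol $\bar c_{2r}$, being central of degree $2r-1$, is identified with $Iz^{2r-1}$ up to a nonzero scalar and lower-degree terms. Combined with the vanishing of the centre $Z(\mathrm{U}(\mathfrak{sl}_N[z]^\theta))=\bC$, this gives $Z(\mathrm{U}(\gl_N[z]^\theta))=\bC[Iz,Iz^3,\dots]$; in particular the $\bar c_{2r}$ are algebraically independent and exhaust $Z(\gr\,\Y_N)$. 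Lifting back, the $c_{2r}$ themselves are algebraically independent, and since $\bC[c_2,c_4,\dots]\subseteq\mathscr{ZY}_N$ while $\gr\,\mathscr{ZY}_N\subseteq Z(\gr\,\Y_N)=\gr\,\bC[c_2,c_4,\dots]$, a graded squeeze forces $\mathscr{ZY}_N=\bC[c_2,c_4,\dots]$.

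For part (2) I would first record that $\nu_{g(u)}$ sends $d_i(u)\mapsto g(u)d_i(u)$ and leaves the $e_{ij}(u),f_{ji}(u)$ fixed (by uniqueness of the Gauss decomposition), so $\nu_{g(u)}$ multiplies $\sdet\,S(u)$ by $g(u)g(u-1)\cdots g(u-N+1)$. This description of $\SY_N=\bigcap_g\ker(\nu_g-\mathrm{id})$ lets one show $\gr\,\SY_N=\mathrm{U}(\mathfrak{sl}_N[z]^\theta)$, after which the multiplication map $\mathscr{ZY}_N\otimes\SY_N\to\Y_N$ is a filtered homomorphism whose associated graded is the Poincar\'e--Birkhoff--Witt factorization $\mathrm{U}\big(\bigoplus_{r\ \mathrm{odd}}\bC Iz^r\big)\otimes\mathrm{U}(\mathfrak{sl}_N[z]^\theta)\xrightarrow{\sim}\mathrm{U}(\gl_N[z]^\theta)$; being a filtered map that induces an isomorphism on $\gr$, it is itself an isomorphism, i.e.\ $\Y_N\cong\mathscr{ZY}_N\otimes\SY_N$. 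Finally, under this splitting $Z(\Y_N)=\mathscr{ZY}_N\otimes Z(\SY_N)$, and $Z(\Y_N)=\mathscr{ZY}_N$ by part (1), whence $Z(\SY_N)=\bC$.

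The main obstacle is the structural input on the twisted current algebra: that $Z(\mathrm{U}(\mathfrak{sl}_N[z]^\theta))=\bC$ and, equivalently for the bookkeeping above, that $\gr\,\SY_N$ is exactly $\mathrm{U}(\mathfrak{sl}_N[z]^\theta)$ (the symbols of the $\nu_g$-invariant elements are no larger than expected). The $R$-matrix fusion establishing centrality and the $\nu_{g}$-computation are comparatively routine; it is controlling the invariants of $\mathfrak{sl}_N[z]^\theta$ that does the real work of showing the centre of $\Y_N$ is no bigger than $\bC[c_2,c_4,\dots]$.
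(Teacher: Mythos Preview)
The paper does not supply a proof of this proposition at all: it is stated with a citation to \cite[Theorems 2.8.2, 2.9.2, 2.12.1]{Mol07}, and the subsequent text immediately moves on. So there is no ``paper's own proof'' to compare against beyond the referenced theorems in Molev's book.

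Your outline is in fact a faithful sketch of the arguments in \cite{Mol07}. Centrality of the $c_r$ via the fused product acting on the antisymmetrizer, the functional equation eliminating the odd $c_{2k-1}$, and the passage to $\gr\,\Y_N\cong\mathrm{U}(\gl_N[z]^\theta)$ to pin down the centre are exactly Molev's strategy for Theorem~2.8.2; the tensor splitting via the $\nu_{g(u)}$ and a PBW/filtration argument is his Theorem~2.9.2. You have correctly identified the one genuinely nontrivial input: that $Z\big(\mathrm{U}(\mathfrak{sl}_N[z]^\theta)\big)=\bC$, equivalently that no ``extra'' central elements appear beyond the scalars $Iz^{2r-1}$ in the graded picture. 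Molev handles this not by a bare assertion but by analysing the $\mathfrak{so}_N$-module structure of the graded pieces and invoking the classical description of $\mathfrak{so}_N$-invariants in tensor powers of $\gl_N$; if you want your write-up to be self-contained you should either reproduce that invariant-theory argument or cite it explicitly, since the rest of your proof genuinely hinges on it.

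One small caution: the step ``$\gr\,\SY_N=\mathrm{U}(\mathfrak{sl}_N[z]^\theta)$'' is not automatic from the definition $\SY_N=\bigcap_g\ker(\nu_g-\mathrm{id})$, because taking invariants need not commute with passing to the associated graded. In Molev's treatment this is circumvented by first proving the tensor decomposition $\Y_N\cong\mathscr{ZY}_N\otimes\SY_N$ directly (constructing an explicit even series $g(u)$ with $\nu_{g(u)}(\sdet\,S(u))=1$, so that $\nu_{g(u)}$ projects onto $\SY_N$), and only then reading off $\gr\,\SY_N$. Your sketch reverses the logical order here; it works, but you should be explicit about how you justify the graded identification.
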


The special twisted Yangian $\SY_N$ can also be regarded as a quotient of the twisted Yangian $\Y_N$.
\begin{lem}\label{syquo}
The subalgebra $\SY_N$ is isomorphic to the quotient of $\Y_N$ by the ideal generated by the coefficients of $\sdet\,S(u)-1$, i.e., 
$\SY_N\cong \Y_N/(\sdet\,S(u)-1).$
\end{lem}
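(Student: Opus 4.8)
The statement to prove is Lemma~\ref{syquo}: $\SY_N \cong \Y_N/(\sdet\,S(u)-1)$. The plan is to leverage the tensor decomposition $\Y_N \cong \mathscr{ZY}_N \otimes \SY_N$ from Proposition~\ref{prop:center}(2), combined with the fact that $c_2, c_4, \ldots$ from \eqref{sdet} are algebraically independent generators of $\mathscr{ZY}_N$ by Proposition~\ref{prop:center}(1). First I would establish that the ideal $I := (\sdet\,S(u)-1)$ is exactly the ideal of $\Y_N$ generated by all the $c_r$, $r \gge 1$; since $\sdet\,S(u)-1 = \sum_{r\gge 1} c_r u^{-r}$, the coefficients of this series are precisely the $c_r$, so $I = (c_r : r \gge 1)$ as a two-sided ideal.

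Next, I would observe that $\SY_N$ is stable under all the automorphisms $\nu_{g(u)}$ of the form \eqref{eq:mu_g-twisted} (indeed it is defined as the fixed subalgebra), while these automorphisms act on $\sdet\,S(u)$ by $\sdet\,S(u) \mapsto g(u)^N\,\sdet\,S(u)$ — here I should double-check the exact scaling factor using the definition $\sdet\,S(u)=d_1(u)d_2(u-1)\cdots d_N(u-N+1)$ and how $\nu_{g(u)}$ scales each $d_i$; but the key point is just that $\nu_{g(u)}$ acts nontrivially on $c_2, c_4, \ldots$ unless $g=1$. This shows the $c_r$ (the even ones generating $\mathscr{ZY}_N$) lie in the complement of $\SY_N$ under the decomposition $\Y_N \cong \mathscr{ZY}_N \otimes \SY_N$. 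Concretely, under this isomorphism, $c_r \mapsto c_r \otimes 1$ (with $c_r$ for odd $r$ expressible via the even ones and scalars, since only $c_2,c_4,\ldots$ are independent generators). Therefore the ideal $I$ generated by all $c_r$ corresponds, under $\Y_N \cong \mathscr{ZY}_N \otimes \SY_N$, to the ideal $\mathscr{ZY}_N^+ \otimes \SY_N$, where $\mathscr{ZY}_N^+$ is the augmentation ideal (elements of $\mathscr{ZY}_N$ with zero constant term).

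Finally, I would compute the quotient: since $\mathscr{ZY}_N$ is a polynomial algebra $\bC[c_2, c_4, \ldots]$, we have $\mathscr{ZY}_N / \mathscr{ZY}_N^+ \cong \bC$, hence
\[
\Y_N / I \;\cong\; (\mathscr{ZY}_N \otimes \SY_N)/(\mathscr{ZY}_N^+ \otimes \SY_N) \;\cong\; (\mathscr{ZY}_N/\mathscr{ZY}_N^+) \otimes \SY_N \;\cong\; \SY_N,
\]
and the composite $\SY_N \hookrightarrow \Y_N \twoheadrightarrow \Y_N/I$ is the isomorphism. The one subtlety to verify carefully is that the quotient map restricted to $\SY_N$ is injective (equivalently $\SY_N \cap I = 0$), which is immediate from the tensor decomposition once we know $I = \mathscr{ZY}_N^+ \otimes \SY_N$; and surjectivity, which holds because every element of $\Y_N$ is congruent mod $I$ to an element of $\SY_N$ (write it as $\sum z_i \otimes y_i$ and reduce each $z_i$ to its constant term).

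\textbf{Main obstacle.} The crux is pinning down that $I = (c_r : r\gge 1)$ coincides with $\mathscr{ZY}_N^+ \otimes \SY_N$ under the isomorphism of Proposition~\ref{prop:center}(2) — i.e., that the ideal generated by the $c_r$ "sees" only the central tensor factor. This requires knowing that the images of $c_2, c_4, \ldots$ under $\Y_N \cong \mathscr{ZY}_N \otimes \SY_N$ are $c_2 \otimes 1, c_4 \otimes 1, \ldots$, which should follow from the construction of that isomorphism in \cite{Mol07} (the $\SY_N$ factor being the $\nu_{g(u)}$-invariants and the central factor being generated by the $\sdet$ coefficients); the odd $c_r$ then cause no trouble since they already lie in the subalgebra generated by the even ones together with scalars. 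Everything else is formal manipulation of tensor products and polynomial rings.
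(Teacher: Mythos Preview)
Your approach is correct. The paper states this lemma without proof, treating it as an immediate consequence of Proposition~\ref{prop:center}; your argument via the tensor decomposition $\Y_N\cong\mathscr{ZY}_N\otimes\SY_N$ and the identification of the ideal $(c_r:r\gge 1)$ with $\mathscr{ZY}_N^+\otimes\SY_N$ is precisely the intended unpacking (for the odd $c_r$, note simply that they are central and vanish under $S(u)\mapsto I$, hence lie in the augmentation ideal of $\bC[c_2,c_4,\ldots]$).
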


For later use, we shall define the following generating series, for $1\lle i<j \lle N$:
\begin{align}
     \label{bdef}
\begin{split}
b_{ji}(u)&=\sum_{r\gge 0}b_{ji,r}u^{-r-1}:=f_{ji}(u-\tfrac{i}{2}),\\b_i(u)&=\sum_{r\gge 0}b_{i,r}u^{-r-1}:= b_{i+1,i}(u),
\end{split}
\\
\label{hdef}
\begin{split}
h_0(u)&=1+\sum_{r\gge 0}h_{0,r}u^{-r-1}:=d_1(u),\\
h_i(u)&=1+\sum_{r\gge 0}h_{i,r}u^{-r-1}:=\tl d_{i}(u-\tfrac{i}{2})d_{i+1}(u-\tfrac{i}{2}). 
\end{split}
\end{align}

In terms of the new notations in \eqref{bdef}--\eqref{hdef}, Corollary~\ref{theta} can be rephrased as follows.

\begin{cor} \label{theta2}
The map $\vartheta_M\colon \X_N \longrightarrow \X_{M+N}$ sends 
    $b_i(u)\mapsto b_{M+i}(u)$ and $h_i(u)\mapsto h_{M+i}(u)$.
\end{cor}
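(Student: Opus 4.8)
The plan is to deduce Corollary~\ref{theta2} directly from Corollary~\ref{theta} by translating the statements about $d_i(u)$ and $f_{ji}(u)$ into the new notation of \eqref{bdef}--\eqref{hdef}. The key point is that the parameter shifts built into $b_i(u)$ and $h_i(u)$ are chosen precisely so that they are compatible with the spectral-parameter shift $u \mapsto u+\tfrac{M}{2}$ appearing in Corollary~\ref{theta}.

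First I would handle $b_i(u)$. By definition $b_i(u) = b_{i+1,i}(u) = f_{i+1,i}(u-\tfrac{i}{2})$. I want to compute $\vartheta_M(b_i(u))$. Since $\vartheta_M$ is an algebra homomorphism and $b_i(u)$ is (a generating series whose coefficients are) obtained from $f_{i+1,i}$ by a scalar shift of the spectral parameter, it suffices to apply Corollary~\ref{theta} with the argument $u - \tfrac{i}{2}$ in place of $u$. Writing $v = u - \tfrac{i}{2}$, Corollary~\ref{theta} gives $\vartheta_M\big(f_{i+1,i}(v+\tfrac{M}{2})\big) = f_{M+i+1,M+i}(v)$. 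Now $v + \tfrac{M}{2} = u - \tfrac{i}{2} + \tfrac{M}{2} = u - \tfrac{i}{2} + \tfrac{M}{2}$, so the left-hand side is $\vartheta_M\big(f_{i+1,i}(u + \tfrac{M-i}{2})\big)$; I should instead run the argument with the shift so that the left side reads $\vartheta_M(b_i(u))$. Concretely: apply Corollary~\ref{theta} to the series $f_{i+1,i}$ evaluated at $u' = u - \tfrac{M+i}{2} + \tfrac{M}{2} = u - \tfrac{i}{2}$, i.e. use $\vartheta_M\big(f_{i+1,i}(w + \tfrac{M}{2})\big) = f_{M+i+1,M+i}(w)$ with $w = u - \tfrac{M+i}{2}$. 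Then $w + \tfrac{M}{2} = u - \tfrac{i}{2}$, so the left-hand side is $\vartheta_M\big(f_{i+1,i}(u-\tfrac{i}{2})\big) = \vartheta_M(b_i(u))$, while the right-hand side is $f_{M+i+1,M+i}\big(u - \tfrac{M+i}{2}\big) = b_{M+i}(u)$ by the definition of $b_{M+i}(u) = b_{M+i+1,M+i}(u) = f_{M+i+1,M+i}(u - \tfrac{M+i}{2})$. This is exactly $\vartheta_M(b_i(u)) = b_{M+i}(u)$.

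Next I would treat $h_i(u)$. For $i \gge 1$, $h_i(u) = \tl d_i(u - \tfrac{i}{2}) d_{i+1}(u - \tfrac{i}{2})$. Since $\vartheta_M$ is an algebra homomorphism, it commutes with taking inverses of invertible series, so $\vartheta_M(\tl d_i(v)) = \vartheta_M(d_i(v))^{-1}$. Applying Corollary~\ref{theta} with argument $w + \tfrac{M}{2} = u - \tfrac{i}{2}$, i.e. $w = u - \tfrac{M+i}{2}$, gives $\vartheta_M(d_i(u - \tfrac{i}{2})) = d_{M+i}(u - \tfrac{M+i}{2})$ and likewise $\vartheta_M(d_{i+1}(u-\tfrac{i}{2})) = d_{M+i+1}(u - \tfrac{M+i}{2})$. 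Hence $\vartheta_M(h_i(u)) = \tl d_{M+i}(u - \tfrac{M+i}{2}) d_{M+i+1}(u - \tfrac{M+i}{2}) = h_{M+i}(u)$ by the definition of $h_{M+i}(u)$. For $i = 0$, $h_0(u) = d_1(u)$ and $\vartheta_M(h_0(u)) = \vartheta_M(d_1(u)) = d_{M+1}(u) = h_M(u)$ provided $M \gge 1$ (using Corollary~\ref{theta} with argument shift $\tfrac{M}{2}$, evaluating $d_1$ at $u - \tfrac{M}{2} + \tfrac{M}{2}$... here one must note that $h_0(u) = d_1(u)$ has no internal shift, so $\vartheta_M(d_1(u-\tfrac{M}{2}+\tfrac{M}{2}))$; more directly apply Corollary~\ref{theta} to $d_1(w + \tfrac{M}{2}) \mapsto d_{M+1}(w)$ with $w = u - \tfrac M 2$, giving $\vartheta_M(d_1(u)) = d_{M+1}(u-\tfrac M2)$, which equals $h_M(u)$ when $M \gge 1$; for $M=0$ the claim is trivial).

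There is no genuine obstacle here; the content is purely bookkeeping of spectral-parameter shifts, and the only thing to get right is that the shift $-\tfrac{i}{2}$ in $b_i, h_i$ and the shift $\tfrac{M}{2}$ in Corollary~\ref{theta} combine so that the index $i$ on the right becomes $M+i$ with a shift $-\tfrac{M+i}{2}$, matching the definition of $b_{M+i}, h_{M+i}$. I would present this as a short computation, checking the $i=0$ case for $h$ separately, and remark that it also follows from the functoriality relation \eqref{l12} if one prefers.
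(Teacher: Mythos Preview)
Your approach is exactly the paper's: the corollary is literally a rephrasing of Corollary~\ref{theta} in the shifted variables \eqref{bdef}--\eqref{hdef}, and your bookkeeping for $b_i(u)$ and for $h_i(u)$ with $i\gge 1$ is correct.

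However, your treatment of the case $i=0$ for $h$ is both unnecessary and wrong. The corollary is stated for the same index range as $b_i$, namely $1\lle i<N$; the case $i=0$ is not part of the claim. Moreover, your asserted identity $\vartheta_M(h_0(u))=h_M(u)$ is false for $M\gge 1$: by Corollary~\ref{theta} one has $\vartheta_M(d_1(u))=d_{M+1}(u-\tfrac{M}{2})$, whereas $h_M(u)=\tl d_M(u-\tfrac{M}{2})\,d_{M+1}(u-\tfrac{M}{2})$ carries the extra nontrivial factor $\tl d_M$. Simply drop that paragraph; the remaining argument is complete.
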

\begin{lem}\label{lem:special-in-Gauss}
The special twisted Yangian $\SY_N$ is generated by the coefficients of  $e_{i}(u)$, $f_{i}(u)$, and $h_i(u)$, for $1\lle i<N$.
\end{lem}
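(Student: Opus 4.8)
The plan is to identify $\SY_N$ with the quotient $\Y_N/(\sdet\,S(u)-1)$ via Lemma~\ref{syquo}, and then show that in this quotient the coefficients of $e_i(u), f_i(u), h_i(u)$ for $1\lle i<N$ already generate everything. By Lemma~\ref{lem:ei-generate-eij}, $\Y_N$ is generated by the coefficients of $d_1(u),\ldots,d_N(u)$ together with $e_i(u), f_i(u)$ for $1\lle i<N$. Since the $h_i(u)$ for $1\lle i<N$ are built out of $d_i(u)$'s by \eqref{hdef}, the only thing missing from the proposed generating set is a way to recover the individual $d_i(u)$'s (equivalently $h_0(u)=d_1(u)$ and one more piece of data) from the $h_i(u)$, $1\lle i<N$, modulo the relation $\sdet\,S(u)=1$.

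The key computation is therefore to invert the system relating $\{d_1(u),\ldots,d_N(u)\}$ to $\{h_0(u),h_1(u),\ldots,h_{N-1}(u)\}$ — but note the proposed set omits $h_0(u)=d_1(u)$, so I must show $d_1(u)$ itself lies in the subalgebra generated by the $e_i,f_i,h_i$ ($1\lle i<N$) once we quotient by $\sdet\,S(u)-1$. First I would observe that from \eqref{hdef}, up to the standard shifts, $h_i(u) = \tl d_i(u-\tfrac i2) d_{i+1}(u-\tfrac i2)$, so the product $d_1(u) h_1(u+\tfrac12)h_2(u+1)\cdots$ telescopes: a suitable ordered product of shifted $h_i(u)$'s recovers a shifted ratio $\tl d_1(\cdot) d_N(\cdot)$, and more generally all ratios $\tl d_i d_j$. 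Combined with $\sdet\,S(u) = d_1(u)d_2(u-1)\cdots d_N(u-N+1) = 1$, which is one multiplicative relation among the $d_i$'s with prescribed shifts, one can solve for each $d_i(u)$ as an explicit (shifted) product/quotient of the $h_j$'s. Concretely, $\sdet\,S(u)=1$ together with the $N-1$ series $h_1,\ldots,h_{N-1}$ gives $N$ independent multiplicative constraints on the $N$ series $d_1,\ldots,d_N$, which is exactly enough to express each $d_i(u)$ — this is the heart of the argument and where the bookkeeping of the spectral-parameter shifts $u\mapsto u-\tfrac i2$, $u\mapsto u-i+1$ must be done carefully so the shifts are consistent.

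Having recovered all $d_i(u)$ (hence all of $\Y_N$, modulo the $\sdet$ relation) from the coefficients of $e_i(u), f_i(u), h_i(u)$, it remains to check these elements actually lie in $\SY_N$, not merely that they generate the quotient. For this I would use the characterization of $\SY_N$ as the fixed-point subalgebra under all $\nu_{g(u)}$ from \eqref{eq:mu_g-twisted}: under $\nu_{g(u)}: S(u)\mapsto g(u)S(u)$ with $g(u)$ even, the Gauss factors transform as $d_i(u)\mapsto g(u)d_i(u)$ while $E(u), F(u)$ are unchanged (since $g(u)$ is scalar, it commutes through the Gauss decomposition and only rescales $D(u)$). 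Hence $e_i(u), f_i(u)$ are $\nu_{g(u)}$-invariant outright, and $h_i(u) = \tl d_i(u-\tfrac i2)d_{i+1}(u-\tfrac i2) \mapsto g(u-\tfrac i2)^{-1}g(u-\tfrac i2) \tl d_i d_{i+1} = h_i(u)$ is invariant too, for $1\lle i<N$. Therefore all the proposed generators lie in $\bigcap_{g}\Y_N^{\nu_{g(u)}} = \SY_N$.

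The main obstacle I anticipate is the shift bookkeeping in the inversion step: solving the multiplicative system to express $d_i(u)$ in terms of the $h_j(u)$ and the $\sdet$ relation requires composing series evaluated at various shifted arguments $u-\tfrac12, u-1, \ldots$, and one must verify that the shifts line up so that the resulting expression is a well-defined element of $\Y_N$ (a formal power series in $u^{-1}$ with the right leading term) rather than something requiring inversion of a non-invertible series. A secondary point to be careful about is that $h_0(u)=d_1(u)$ is \emph{not} among the listed generators, so the argument genuinely relies on the $\sdet\,S(u)=1$ relation (equivalently on passing to the quotient via Lemma~\ref{syquo}) to pin down the overall normalization; without it one recovers only the ratios $\tl d_i d_j$ and not the $d_i$ themselves, which is consistent with $\Y_N \cong \mathscr{ZY}_N\otimes\SY_N$ from Proposition~\ref{prop:center}.
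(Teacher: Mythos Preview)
Your proposal is correct and follows essentially the same strategy as the paper: you verify that $e_i(u), f_i(u), h_i(u)$ are fixed by every $\nu_{g(u)}$ (hence lie in $\SY_N$), and then use the $\sdet$ relation together with the $h_i$ to recover the $d_i$. The only difference is in the packaging of the second half. You pass to the quotient $\Y_N/(\sdet\,S(u)-1)$ via Lemma~\ref{syquo} and propose to explicitly solve the multiplicative system for each $d_i(u)$ in terms of shifted $h_j$'s. The paper instead stays inside $\Y_N$ and argues more softly: from $\sdet\,S(u)=d_1(u)d_2(u-1)\cdots d_N(u-N+1)$ and $h_i(u+\tfrac{i}{2})=\tl d_i(u)d_{i+1}(u)$ one sees $d_1(u)\in \mathscr{ZY}_N\cdot\mathcal{SY}_N[[u^{-1}]]$ (where $\mathcal{SY}_N$ is the subalgebra generated by your proposed set), whence the same holds for every $d_i(u)$; combined with Lemma~\ref{lem:ei-generate-eij} this gives $\mathscr{ZY}_N\cdot\mathcal{SY}_N=\Y_N$, and then the tensor decomposition of Proposition~\ref{prop:center} finishes.

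The paper's route sidesteps precisely the obstacle you flagged: there is no need to track the spectral-parameter shifts or verify that the explicit inversion produces a well-defined power series, because one never actually solves for $d_i(u)$ but only places it in $\mathscr{ZY}_N\cdot\mathcal{SY}_N$. Your route is equally valid once the bookkeeping is done, and has the minor advantage of making the dependence on $\sdet\,S(u)=1$ completely explicit.
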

\begin{proof}
Let $\mathcal{SY}_N$ be the subalgebra of $\Y_N$ generated by the coefficients of  $e_{i}(u)$, $f_{i}(u)$, and $h_i(u)$, for $1\lle i< N$. Clearly, $e_{ij}(u)$, $f_{ji}(u)$, and $h_i(u)$, for $1\lle i<j\lle N$, are stable under all
automorphisms of the form \eqref{eq:mu_g-twisted}. Therefore, we have $\mathcal{SY}_N\subset \SY_N$.

Using
$$\sdet\, S(u)=d_1(u)d_2(u-1)\cdots d_{N}(u-N+1),\qquad h_i(u+\tfrac{i}{2})=\tl d_i(u)d_{i+1}(u)
$$
and Proposition \ref{prop:center}, we have that $d_1(u)\in \mathscr{ZY}_N\cdot \mathcal{SY}_N[[u^{-1}]]$. Hence $d_i(u)\in \mathscr{ZY}_N\cdot \mathcal{SY}_N[[u^{-1}]]$ for all $1\lle i\lle N$. Note that $e_{ij}(u)$ and $f_{ji}(u)$ for $1\lle i<j\lle N$ are in $\mathcal{SY}_N$ by Lemma \ref{lem:ei-generate-eij}. Therefore $\mathscr{ZY}_N\cdot \mathcal{SY}_N=\Y_N$. The claim now follows from Proposition \ref{prop:center}.
\end{proof}

In light of Lemma \ref{lem:special-in-Gauss}, we define the special extended Yangian as follows.

\begin{dfn}\label{sxdef}
The \emph{special extended Yangian} $\SX_N$ is the subalgebra of $\X_N$ generated by the coefficients of $e_{i}(u)$, $f_{i}(u)$, and $h_i(u)$, for $1\lle i< N$.
\end{dfn}

Below we formulate the counterparts of Proposition \ref{prop:PBW} on the PBW type bases for $\Y_N$ and $\SY_N$.

\begin{lem}\label{PBWlem}
Given any linear order on the set of generators $f_{ji}^{(r)}$ and $d_{i}^{(2r)}$, where $r\in\bZ_{> 0}$, $1\lle i<j\lle N$, the ordered monomials in these generators form a basis of $\Y_N$. 
\end{lem}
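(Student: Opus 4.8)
The plan is to deduce Lemma~\ref{PBWlem} from the known PBW theorem for $\Y_N$ in terms of the original generators (Proposition~\ref{prop:PBW}) by a triangularity/filtration argument, transferring a PBW basis across the Gauss decomposition. First I would record the leading terms of the new generators with respect to the filtration \eqref{filter:tY}. From the quasi-determinant formulas (or from \eqref{eq:sij-Gauss}), for $i<j$ one has $f_{ji}^{(r)} \equiv s_{ji}^{(r)}$, $e_{ij}^{(r)}\equiv s_{ij}^{(r)}$, and $d_i^{(r)}\equiv s_{ii}^{(r)}$ modulo $\mathcal F_{r-2}(\Y_N)$ plus products of strictly lower-degree Gauss generators; this is the same computation as in Brundan--Kleshchev \cite[\S5]{BK05} and Jing--Liu--Molev \cite{JLM18}. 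In particular the correspondence $f_{ji}^{(r)}\leftrightarrow s_{ji}^{(r)}$ and $d_i^{(2r)}\leftrightarrow s_{ii}^{(2r)}$ is upper-triangular with respect to the degree filtration. Since by Proposition~\ref{prop:PBW} the ordered monomials in $\{s_{ij}^{(r)}: i>j\}\cup\{s_{kk}^{(2r)}\}$ form a basis of $\Y_N$, an Anti-lexicographic/triangularity lemma for filtered algebras then shows that the ordered monomials in the corresponding set $\{f_{ji}^{(r)}: i>j\}\cup\{d_k^{(2r)}\}$ are also a basis; here one must check that the change of generators respects the filtration degree, i.e. $\mathcal F_s(\Y_N)$ is spanned by ordered monomials in the new generators of total degree $\le s$.

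Carrying this out concretely: let $A$ be the subspace spanned by the proposed ordered monomials in the $f_{ji}^{(r)}$ and $d_i^{(2r)}$. One shows $A=\Y_N$ by induction on the filtration degree. The associated graded $\gr\,\Y_N \cong \mathrm U(\gl_N[z]^\theta)$ is described in \S\ref{subsec:Y}; in it the images $\bar f_{ji}^{(r)}$, $\bar d_i^{(2r)}$ coincide with $\bar s_{ji}^{(r)}$, $\bar s_{ii}^{(2r)}$ (by the leading-term computation), which by Proposition~\ref{prop:PBW} form a PBW basis of $\gr\,\Y_N$. A standard lifting argument (ordered monomials whose symbols form a basis of the associated graded span the whole filtered algebra, and are linearly independent once one knows the ranks match in each degree) then gives that the ordered monomials in the $f_{ji}^{(r)}$, $d_i^{(2r)}$ span $\Y_N$; linear independence follows because passing to $\gr$ sends a nontrivial linear relation of minimal degree to a nontrivial relation among the PBW monomials of $\gr\,\Y_N$, a contradiction.

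One subtlety to address is that $d_i^{(r)}$ is only an even power of $u$'s coefficient that is retained — the relation $s_{ii}(u)=s_{ii}(-u)$ does \emph{not} directly force $d_i(u)=d_i(-u)$, but Proposition~\ref{prop:PBW} only uses the even coefficients $s_{kk}^{(2r)}$, and Lemma~\ref{lem:s11}/the symmetry relation \eqref{sym u} imply that the odd coefficients of $d_i(u)$ lie in the subalgebra generated by the lower-degree generators together with $e$'s and $f$'s; more precisely, one uses that $d_i(u)$ is determined by $s_{ii}(u)$ and lower quasi-minors, and the symmetry \eqref{s11} (applied via $\vartheta_{i-1}$ and Corollary~\ref{theta}, since $d_i(u)=\vartheta_{i-1}(s_{11}(u+\tfrac{i-1}{2}))$ and $s_{11}$ is symmetric) shows the odd part of $d_i(u)$ is not an independent family. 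I expect the main obstacle to be exactly this bookkeeping: proving cleanly that $\mathcal F_s(\Y_N)$ is spanned by ordered monomials of degree $\le s$ in the new generators, i.e.\ that no "degree drop" occurs when rewriting an ordered monomial in the $s$-generators into the $f$-$d$ generators, and conversely. This is handled by the triangularity of the leading terms noted above, but needs care with the ordering conventions and with the role of the $e_{ij}$'s (which must be eliminated using Gauss decomposition and do not appear in the final basis). Once triangularity is in place, the PBW statement for $\SY_N$-type ordered monomials and the analogous statement needed in Section~\ref{sec:Drinfeld} will follow by the same mechanism.
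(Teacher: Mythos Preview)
Your approach is essentially the same as the paper's: establish the congruences $f_{ji}^{(r)}\equiv s_{ji}^{(r)}\pmod{\mathcal F_{r-2}}$ and $d_i^{(2r)}\equiv s_{ii}^{(2r)}\pmod{\mathcal F_{2r-2}}$ from the Gauss decomposition \eqref{eq:sij-Gauss}, then invoke Proposition~\ref{prop:PBW} via filtration triangularity. Your third paragraph's concerns about odd $d_i^{(r)}$ and eliminating the $e_{ij}$'s are superfluous here, since Proposition~\ref{prop:PBW} already uses only the lower-triangular $s_{ji}^{(r)}$ (with $i<j$) and the even diagonal $s_{kk}^{(2r)}$, giving a direct index-preserving bijection with the proposed new generators; the paper's proof is accordingly a single short paragraph.
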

\begin{proof}
Recall the filtration  $\{\mathcal F_s \Y_N\}_{s\gge 0}$ from \eqref{filter:tY}. By Gauss decomposition \eqref{eq:sij-Gauss}, we have
\begin{align*}
f_{ji}^{(r)}& \equiv s_{ji}^{(r)} \ \mod \mathcal F_{r-2}\Y_N,\\
d_{i}^{(2r)}&\equiv s_{ii}^{(2r)} \mod \mathcal F_{2r-2}\Y_N.
\end{align*}
Then the statement is an immediate corollary of Proposition \ref{prop:PBW}.
\end{proof}

\begin{prop}\label{PBWgauss}
Given any linear order on the set of generators $b_{ji,r}$ and $h_{k,2s+1}$, where $r,s\in\bN$, $1\lle i<j\lle N$, $0\lle k<N$ (resp. $1\lle k<N$), the ordered monomials in these generators form a basis of $\Y_N$ (resp. $\SY_N$). 
\end{prop}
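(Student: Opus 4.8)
The plan is to deduce this from the already-established PBW basis in Lemma \ref{PBWlem} by a triangularity/change-of-generators argument with respect to the filtration $\{\mathcal F_s\Y_N\}$. First I would record how the new generators relate to the old ones modulo lower filtration degree. From the definition \eqref{bdef}, $b_{ji}(u) = f_{ji}(u - \tfrac{i}{2})$, so $b_{ji,r}$ is a linear combination of $f_{ji}^{(1)},\dots,f_{ji}^{(r+1)}$ with $f_{ji}^{(r+1)}$ occurring with coefficient $1$; since $\deg f_{ji}^{(r+1)} = r$ and the lower-index $f_{ji}^{(k)}$ have strictly smaller degree, we get $b_{ji,r} \equiv f_{ji}^{(r+1)} \bmod \mathcal F_{r-1}\Y_N$. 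For the Cartan generators, from \eqref{hdef} we have $h_i(u+\tfrac{i}{2}) = \tl d_i(u)d_{i+1}(u)$ for $1\lle i<N$, and $h_0(u)=d_1(u)$; expanding $\tl d_i(u) = 1 - d_i^{(1)}u^{-1}+\cdots$ and multiplying, the coefficient of $u^{-(2s+2)}$ in $h_i(u+\tfrac i2)$ equals $d_{i+1}^{(2s+2)} - d_i^{(2s+2)}$ plus products of strictly-lower-degree generators; shifting $u\mapsto u+\tfrac i2$ only mixes in yet lower-degree terms, so $h_{i,2s+1} \equiv d_{i+1}^{(2s+2)} - d_i^{(2s+2)} \bmod \mathcal F_{2s}\Y_N$ (and $h_{0,2s+1}\equiv d_1^{(2s+2)}$). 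The key point is that the change of basis $\{d_i^{(2s+2)} : 1\lle i\lle N\} \leftrightarrow \{d_1^{(2s+2)}\}\cup\{d_{i+1}^{(2s+2)}-d_i^{(2s+2)} : 1\lle i<N\}$ is unitriangular for each fixed $s$.

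Next I would pass to the associated graded algebra $\gr\,\Y_N = \bigoplus_s \mathcal F_s/\mathcal F_{s-1}$. By Lemma \ref{PBWlem}, the images $\bar f_{ji}^{(r+1)}$ (degree $r$) and $\bar d_i^{(2r)}$ (degree $2r-1$) freely generate a polynomial-type PBW basis of $\gr\,\Y_N$ in the sense that ordered monomials in them form a basis; moreover $\gr\,\Y_N$ is commutative (it is a quotient of $\mathrm U(\gl_N[z]^\theta)$ by the earlier identification, but in any case commutativity of the leading symbols is all we need), so any linear order gives a basis and the combinatorics is that of a polynomial ring. By the congruences above, the symbols $\overline{b_{ji,r}}$ and $\overline{h_{k,2s+1}}$ are obtained from the $\bar f_{ji}^{(r+1)}$ and $\bar d_i^{(2r)}$ by an invertible (degree-preserving, unitriangular within each graded piece) linear substitution. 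Hence the $\overline{b_{ji,r}}$, $\overline{h_{k,2s+1}}$ are also algebraically independent and their ordered monomials form a basis of $\gr\,\Y_N$; the condition $h_{k,2s+1}\prec b_{ji,r}$ is harmless since $\gr\,\Y_N$ is commutative, but I keep it because lifting it to $\Y_N$ below uses it.

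Finally I would lift back to $\Y_N$. A standard filtered-to-graded argument shows: if the leading symbols of a set of ordered monomials form a basis of $\gr\,\Y_N$, then the monomials themselves span $\Y_N$, and a dimension/filtered-degree count in each $\mathcal F_s$ (using that $\mathcal F_s\Y_N/\mathcal F_{s-1}\Y_N$ is finite-dimensional in each internal degree — or more precisely that the filtration is exhaustive and the graded pieces have the stated bases) forces linear independence; the hypothesis $h_{k,2s+1}\prec b_{ji,r}$ guarantees we are comparing the same ordered monomials upstairs and downstairs. For $\SY_N$ the argument is identical after invoking Lemma \ref{lem:special-in-Gauss} and Proposition \ref{prop:center}: $\Y_N \cong \mathscr{ZY}_N\otimes\SY_N$ with $\mathscr{ZY}_N$ polynomial on $c_2,c_4,\dots$, and $c_{2s+2}\equiv h_{0,2s+1}+\text{(something in the other }h\text{'s}) \bmod \mathcal F_{2s}$ — more precisely $\sdet\,S(u) = d_1(u)d_2(u-1)\cdots d_N(u-N+1)$ has leading symbol $\sum_i \bar d_i^{(2s+2)}$ in degree $2s+1$, i.e. $\bar c_{2s+2} = \sum_{i=1}^N \bar d_i^{(2s+2)}$, which together with $\{\bar d_{i+1}^{(2s+2)}-\bar d_i^{(2s+2)}\}$ recovers all $\bar d_i^{(2s+2)}$; so removing $h_0$ from the generating set and the central $c_{2s+2}$ from $\Y_N$ exactly match, and the PBW basis of $\SY_N$ in the $\{h_{k,2s+1} : 1\lle k<N\}\cup\{b_{ji,r}\}$ follows from Proposition \ref{prop:PBW}, or rather Lemma \ref{PBWlem}, for $\SY_N$ via the isomorphism $\SY_N\cong\Y_N/(\sdet\,S(u)-1)$ of Lemma \ref{syquo}.

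The main obstacle I anticipate is not conceptual but bookkeeping: one must verify carefully that the substitution from $\{f_{ji}^{(r)}, d_i^{(2r)}\}$ to $\{b_{ji,r}, h_{k,2s+1}\}$ is genuinely unitriangular with respect to the filtration — in particular that the spectral-shift $u\mapsto u-\tfrac i2$ in $b_{ji}$ and $u\mapsto u+\tfrac i2$ in $h_i$, and the quadratic product $\tl d_i d_{i+1}$, introduce only strictly-lower-filtered-degree corrections and no obstruction to invertibility. Checking the leading symbol $\overline{h_{i,2s+1}} = \bar d_{i+1}^{(2s+2)}-\bar d_i^{(2s+2)}$ (and that the odd-indexed coefficients $h_{k,2s}$ need not be taken as generators because, just as $d_i(u)=d_i(-u)$ has only even coefficients, $h_i(u)$ is even) is the one computation that genuinely needs the evenness built into \eqref{hdef} and the symmetry relation, and I would carry it out explicitly before invoking the filtered-to-graded principle.
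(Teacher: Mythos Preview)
Your approach is essentially the same as the paper's: both establish the congruences $b_{ji,r}\equiv f_{ji}^{(r+1)}$, $h_{0,2s+1}\equiv d_1^{(2s+2)}$, $h_{i,2s+1}\equiv d_{i+1}^{(2s+2)}-d_i^{(2s+2)}$ modulo lower filtration, note that $\bar c_{2r}=\sum_i \bar d_i^{(2r)}$ in the graded algebra, and then invoke Lemma~\ref{PBWlem} together with the tensor decomposition of Proposition~\ref{prop:center} to conclude. Your version spells out the filtered-to-graded lifting argument in more detail than the paper does, but the content is the same.

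One correction is in order: the claim that $\gr\,\Y_N$ is commutative is false. As recorded earlier in the paper, $\gr\,\Y_N\cong \mathrm U(\gl_N[z]^\theta)$, which is a genuine (noncommutative) universal enveloping algebra. Fortunately your conclusion survives, since PBW for enveloping algebras already guarantees that ordered monomials in \emph{any} basis of the underlying Lie algebra, with \emph{any} total order, form a basis of $\mathrm U$; the invertible linear substitution from $\{\bar f_{ji}^{(r+1)},\bar d_i^{(2r)}\}$ to $\{\overline{b_{ji,r}},\overline{h_{k,2s+1}}\}$ is a change of Lie algebra basis in each graded piece, so the new ordered monomials still form a basis of $\gr\,\Y_N$. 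You should replace the commutativity remark with this PBW justification; the rest of your argument goes through unchanged.
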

\begin{proof}
By Gauss decomposition and the definitions of $b_{ji,r}$ and $h_{i,2r+1}$ from \eqref{bdef} and \eqref{hdef}, we have that
\begin{align*}
&b_{ji,r}\equiv f_{ji}^{(r+1)}\equiv s_{ji}^{(r+1)} \qquad \qquad \qquad \qquad \quad \ \ \, \mod \mathcal F_{r-1}\Y_N,\\
&h_{0,2r+1}\equiv d_{1}^{(2r+2)} \equiv s_{11}^{(2r+2)} \qquad \qquad \ \,\qquad \quad \ \ \, \mod \mathcal F_{2r}\Y_N,\\
&h_{i,2r+1}\equiv d_{i+1}^{(2r+2)}-d_i^{(2r+2)}\equiv s_{i+1,i+1}^{(2r+2)}-s_{ii}^{(2r+2)} \mod \mathcal F_{2r}\Y_N.
\end{align*}
Recall the generators $c_{2r}$ of the center $\mathscr Z\Y_N$ from \eqref{sdet}. It is known from the proof of \cite[Thm~ 2.8.2]{Mol07} that
$c_{2r}\equiv \sum_{i=1}^N d_i^{(2r)} \mod \mathcal F_{2r-2}\Y_N.$
Hence the statement follows from Proposition \ref{prop:center} and Lemma \ref{PBWlem}.
\end{proof}

\section{Current relations for twisted Yangians of low ranks}
\label{sec:low rank}

In this section, we establish new relations among the new current generators for (extended/special) twisted Yangians of rank one and two.

\subsection{The rank 1 case}

In this subsection, we shall work with $\X_2$ and $\SX_2$. Recall 
\begin{align} \label{GD2}
\begin{split}
S(u) =\begin{bmatrix} 1 & 0\\ f(u) & 1\end{bmatrix}\begin{bmatrix} d_1(u) & 0\\ 0 & d_2(u)\end{bmatrix}\begin{bmatrix} 1 & e(u)\\ 0 & 1\end{bmatrix}
&=\begin{bmatrix} d_1(u) & d_1(u)e(u)\\ f(u)d_1(u) & d_2(u)+f(u)d_1(u)e(u)\end{bmatrix},
\\
\wtl S(u) =\begin{bmatrix} 1 & -e(u)\\ 0 & 1\end{bmatrix}\begin{bmatrix} \tl d_1(u) & 0\\ 0 & \tl d_2(u)\end{bmatrix}\begin{bmatrix} 1 & 0\\ -f(u) & 1\end{bmatrix}
&=\begin{bmatrix} \tl d_1(u)+e(u)\tl d_2(u)f(u) & -e(u)\tl d_2(u)\\ -\tl d_2(u)f(u) & \tl d_2(u)\end{bmatrix},
\end{split}
\end{align}
where we dropped the subscripts for $e_1(u)$ and $f_1(u)$ in this rank 1 case. 
In this case, we have that 
\begin{align} \label{sdet2}
    \mathrm{sdet}\,S(u)=d_1(u)d_2(u-1).
\end{align}

\begin{lem}\label{lem:o2-relations}
The following relations hold in $\X_2$:
\begin{align}
[d_i(u),d_j(v)] &=0,\label{eq:o2-1}\\
f(u) &=e(-u-1),\label{eq:o2-2}\\
\tl d_1(u)d_2(u) &=\tl d_1(-u-1)d_2(-u-1),\label{eq:o2-3}\\
[d_1(u),e(v)] &=\frac{1}{u-v}d_1(u)\big(e(v)-e(u)\big)+\frac{1}{u+v+1}\big(f(u)-e(v)\big)d_1(u),\label{eq:o2-4}\\
[d_1(u),f(v)] &=\frac{1}{u-v}\big(f(u)-f(v)\big)d_1(u)+\frac{1}{u+v+1}d_1(u)\big(f(v)-e(u)\big),\label{eq:o2-5}\\
[d_2(u),e(v)] &=\frac{1}{u-v}d_2(u)\big(e(u)-e(v)\big)+\frac{1}{u+v+1}\big(e(v)-f(u)\big)d_2(u),\label{eq:o2-6}\\
[d_2(u),f(v)] &=\frac{1}{u-v}\big(f(v)-f(u)\big)d_2(u)+\frac{1}{u+v+1}d_2(u)\big(e(u)-f(v)\big),\label{eq:o2-7}\\
[e(u),f(v)] &=\frac{1}{u-v}\big(\tl d_1(u)d_2(u)-\tl d_1(v)d_2(v)\big)+\frac{1}{u+v+1}\big(e(u)-f(v)\big)^2,\label{eq:o2-8}\\
[e(u),e(v)] &=\frac{1}{u-v}\big(e(u)-e(v)\big)^2+\frac{1}{u+v+1}\big(\tl d_1(u)d_2(u)-\tl d_1(v)d_2(v)\big),\label{eq:o2-9}\\
[f(u),f(v)] &=-\frac{1}{u-v}\big(f(u)-f(v)\big)^2-\frac{1}{u+v+1}\big(\tl d_1(u)d_2(u)-\tl d_1(v)d_2(v)\big).\label{eq:o2-10}
\end{align}
\end{lem}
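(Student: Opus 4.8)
The plan is to reduce every assertion to a statement about the matrix entries of $S(u)$ and of $\wtl S(u)=S(u)^{-1}$, using the explicit rank-one Gauss decomposition \eqref{GD2}: one has $d_1(u)=s_{11}(u)$, $\tl d_1(u)=s_{11}(u)^{-1}$, $e(u)=s_{11}(u)^{-1}s_{12}(u)$, $f(u)=s_{21}(u)s_{11}(u)^{-1}$, $\tl d_2(u)=\tl s_{22}(u)$, $d_2(u)=\tl s_{22}(u)^{-1}$, and hence $\tl d_1(u)d_2(u)=s_{11}(u)^{-1}\tl s_{22}(u)^{-1}$. The only inputs are the component forms of the quaternary relation \eqref{quater u} and of the inverse-matrix relation \eqref{sts}, both for $N=2$, together with Lemma~\ref{stslem}; equivalently one may work directly from the matrix reflection equation \eqref{quat-mat} after substituting $S=FDE$.

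I would treat the three ``linear'' relations first. Relation \eqref{eq:o2-1} is immediate from Lemma~\ref{comsubalg} with $M=N=1$: the algebra $\X_1$ is commutative (\eqref{quater u} with $i=j=k=l=1$ collapses to $(u^2-(v+1)^2)[s_{11}(u),s_{11}(v)]=0$), the subalgebra $\jmath(\X_1)\subset\X_2$ is generated by the coefficients of $d_1(u)$ and $\vartheta_1(\X_1)$ by those of $d_2(u)$ (Corollary~\ref{theta}), and these two subalgebras commute. Relation \eqref{eq:o2-2}, $f(u)=e(-u-1)$, is the conceptually novel identity (it has no analogue for $\rY(\gl_2)$ and forces $e(u)$ and $f(u)$ to be interdependent); it is extracted from the $u\mapsto -v$ specialization of \eqref{quater u}, whose index choices $(i,j,k,l)=(1,1,1,2)$ and $(1,1,2,1)$ yield identities such as $(2v+1)\,s_{11}(-v)s_{12}(v)=s_{11}(v)\big(2v\,s_{21}(-v)+s_{12}(-v)\big)$, which — after dividing by the invertible series $s_{11}(\pm v)$, using $[s_{11}(u),s_{11}(v)]=0$, and invoking the commutators of $s_{11}$ with the off-diagonal entries coming from other components of \eqref{quater u} — rearrange to $f(-v)=e(v)+\tfrac{1}{2v}\big(e(v)-e(-v)\big)$, i.e.\ \eqref{eq:o2-2} after $v\mapsto -u-1$. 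Relation \eqref{eq:o2-3} then follows by substituting \eqref{eq:o2-2} into the remaining $u=-v$ components of \eqref{quater u}; it is the ``unitarity''-type identity for the product $\tl d_1 d_2$.

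For the remaining six relations the strategy is to establish a minimal independent set directly and deduce the rest by symmetry. Concretely, I would prove \eqref{eq:o2-4}, \eqref{eq:o2-6}, and \eqref{eq:o2-8} from scratch — expanding the relevant bracket of $s$'s and $\tl s$'s via \eqref{GD2}, substituting the pertinent components of \eqref{quater u} and \eqref{sts}, and simplifying with \eqref{eq:o2-1}--\eqref{eq:o2-3}. Then \eqref{eq:o2-5} follows from \eqref{eq:o2-4} and \eqref{eq:o2-7} from \eqref{eq:o2-6} by applying the anti-automorphism $\eta$ of \eqref{eta}, which interchanges $e\leftrightarrow f$ and fixes each $d_k$ (Lemma~\ref{lem:tau}); \eqref{eq:o2-9} follows from \eqref{eq:o2-8} via the substitution $f(v)\mapsto e(-v-1)$ followed by $v\mapsto -v-1$, together with \eqref{eq:o2-3}; and \eqref{eq:o2-10} follows from \eqref{eq:o2-9} via the automorphism $\zeta_2$ of \eqref{zeta}, which sends $e(u)\mapsto -f(-u-1)$, $f(u)\mapsto -e(-u-1)$ and fixes the series $\tl d_1(u)d_2(u)$ by \eqref{eq:o2-3} (Lemma~\ref{zetamap}).

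I expect the main obstacle to be two-fold. Conceptually, it is recognizing and proving the ``identification of currents'' \eqref{eq:o2-2}, since no single component of \eqref{quater u} produces it cleanly and one must combine the $u=-v$ specialization with several other components. Technically, the heavy lifting is the quadratic relation \eqref{eq:o2-8} (and hence \eqref{eq:o2-9}, \eqref{eq:o2-10}): there $d_2(u)=\tl s_{22}(u)^{-1}$ is a genuine quasi-determinant, so the bracket $[e(u),f(v)]$ must be assembled from \eqref{quater u} \emph{together with} \eqref{sts}, and matching the precise right-hand side requires repeatedly splitting rational prefactors — e.g.\ via $\frac{1}{u^2-v^2}=\frac{1}{2u}\big(\frac{1}{u-v}+\frac{1}{u+v}\big)$ — into the $\frac{1}{u-v}$-part and the $\frac{1}{u+v+1}$-part, with the additive shift ``$+1$'' (the rank-one value of $\tfrac{N}{2}$) propagated consistently throughout.
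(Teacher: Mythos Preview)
Your overall toolkit—the quaternary relation \eqref{quater u}, the inverse-matrix relation \eqref{sts}, and the symmetries $\eta$ and $\zeta_2$—matches the paper's, but your order of deduction is different, and in one place this creates a real difficulty.

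The paper does not attack \eqref{eq:o2-2} or \eqref{eq:o2-3} head-on. It first proves \eqref{eq:o2-4} (from the $(i,j,k,l)=(1,1,1,2)$ component of \eqref{sts}, not of \eqref{quater u}), and then simply multiplies \eqref{eq:o2-4} by $u+v+1$ and sets $v=-u-1$: the right-hand side collapses to $(f(u)-e(-u-1))d_1(u)=0$, giving \eqref{eq:o2-2} in one line. Likewise, \eqref{eq:o2-3} is not obtained from the $u=-v$ components of \eqref{quater u}; it falls out of \eqref{eq:o2-8} by letting $v\to -u-1$, since $e(u)-f(v)$ vanishes there by \eqref{eq:o2-2}. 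Your direct approach to \eqref{eq:o2-2} runs into trouble: the $(1,1,1,2)$ component of \eqref{quater u} at $u=-v$ actually yields
\[
\tl d_1(-v)\,f(-v)\,d_1(-v)=e(v)+\tfrac{1}{2v}\big(e(v)-e(-v)\big),
\]
not a relation for $f(-v)$ itself—the conjugation by $d_1(-v)$ does not drop out, so the ``rearranges to $f(-v)=e(v)+\tfrac{1}{2v}(e(v)-e(-v))$'' step is incorrect as written, and in any case that formula is not equivalent to \eqref{eq:o2-2}. Disentangling this would require exactly the $[d_1,e]$ commutator you haven't yet established, so you are implicitly presupposing \eqref{eq:o2-4}. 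The cleaner path is the paper's: prove \eqref{eq:o2-4} first, then read off \eqref{eq:o2-2}.

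A second efficiency point: you plan to compute \eqref{eq:o2-6} directly, but the paper obtains both \eqref{eq:o2-6} and \eqref{eq:o2-7} by applying $\zeta_2$ to \eqref{eq:o2-5} and \eqref{eq:o2-4} respectively (the same trick you reserve for \eqref{eq:o2-10}). For \eqref{eq:o2-10} itself the paper uses $\eta$ rather than $\zeta_2$; either works. Your derivations of \eqref{eq:o2-5} from \eqref{eq:o2-4} via $\eta$, and of \eqref{eq:o2-9} from \eqref{eq:o2-8} via $v\mapsto -v-1$, agree with the paper.
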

\begin{proof} We proceed to verify these relations one-by-one. 

\mybox{Equation \eqref{eq:o2-1}}. Setting $i=j=k=l=1$ in the quaternary relation \eqref{quater u}, we obtain 
$$
(u-v+1)(u+v+1)[s_{11}(u),s_{11}(v)]=0$$ and hence $[s_{11}(u),s_{11}(v)]=0$, i.e., $[d_1(u),d_1(v)]=0$. Since the coefficients of $\mathrm{sdet}\,S(u)$ are central and recall \eqref{sdet2}, we deduce that $[d_i(u),d_j(v)]=0$. (Note that $[d_1(u),\tl d_2(v)]=0$ also follows from Lemma \ref{stslem}.)

\mybox{Equations \eqref{eq:o2-4}--\eqref{eq:o2-5}}. Equation \eqref{eq:o2-5} follows from \eqref{eq:o2-4} by applying the anti-automorphism $\eta$ and  Lemma \ref{lem:tau}. It remains to prove \eqref{eq:o2-4}. Setting $i=j=k=1$ and $l=2$ in \eqref{sts}, one has
\begin{align*}
(u^2-v^2)&[s_{11}(u),\tl s_{12}(v)]\\
=&\,(u+v+1)\big(s_{11}(u)\tl s_{12}(v)+s_{12}(u)\tl s_{22}(v)\big)-(u-v)\big(s_{11}(u)\tl s_{12}(v)+s_{21}(u)\tl s_{22}(v)\big).
\end{align*}
This can be converted by \eqref{GD2} to be
\begin{align*}
(u^2-v^2)[d_1(u),-e(v)\tl d_2(v)]=(u+v+1)&\big(-d_1(u)e(v)\tl d_2(v)+d_1(u)e(u)\tl d_2(v) \big)\\
 -(u-v)&\big(-d_1(u)e(v)\tl d_2(v)+f(u)d_1(u)\tl d_2(v) \big).
\end{align*}
Multiplying both sides by $d_2(v)$ from the right, we obtain
\begin{align*}
(u^2-v^2)&[d_1(u),e(v)]\\=&\,(u+v+1)d_1(u)\big(e(v)-e(u)\big)-(u-v)\big(d_1(u)e(v)-f(u)d_1(u)\big).
\end{align*}
This identity can then be rewritten as 
\[
(u-v)(u+v+1)[d_1(u),e(v)]=(u+v+1)d_1(u)\big(e(v)-e(u)\big)-(u-v)\big(e(v)-f(u)\big)d_1(u),
\]
proving \eqref{eq:o2-4}. 

\mybox{Equation \eqref{eq:o2-2}}. Multiplying both sides of \eqref{eq:o2-4} by $u+v+1$
 and then setting $v=-u-1$, we obtain that $\big(f(u)-e(-u-1)\big)d_1(u)=0$. Hence $f(u)=e(-u-1)$.
 
\mybox{Equation \eqref{eq:o2-6}}. 
Applying the automorphism $\zeta_2$ to both sides of \eqref{eq:o2-5} with the help of Lemma \ref{zetamap} and then changing variables $u\mapsto -u-1, v\mapsto -v-1$, we have
\[
[\tl d_2(u),e(v)] =\frac{1}{-u+v}\big(e(u)-e(v)\big)\tl d_2(u)+\frac{1}{-u-v-1}\tl d_2(u)\big(e(v)-f(u)\big).
\]
Multiplying by $d_2(u)$ on both the left and right of the above identity gives us \eqref{eq:o2-6}. 

\mybox{Equation \eqref{eq:o2-7}}. Equation \eqref{eq:o2-7} can be derived from \eqref{eq:o2-4} in the same way as \eqref{eq:o2-6} is derived from \eqref{eq:o2-5} above. We skip the detail.

\mybox{Equation \eqref{eq:o2-8}}. 
We rewrite \eqref{eq:o2-5} as
\beq\label{o2-5b}
\begin{split}
-(u-v)f(v)d_1(u)&+\big(f(v)-f(u)\big)d_1(u)\\=&-(u-v)d_1(u)f(v)-\frac{u-v}{u+v+1}d_1(u)\big(e(u)-f(v)\big).
\end{split}
\eeq
We also reformulate \eqref{eq:o2-6} as 
\beq\label{o2-6b}
[e(u),\tl d_2(v)]=\frac{1}{u-v}\big(e(u)-e(v)\big)\tl d_2(v)+\frac{1}{u+v+1}\tl d_2(v)\big(e(u)-f(v)\big),
\eeq
which implies that
\beq\label{o2-6c}
(u-v)e(u)\tl d_2(v)+\big(e(v)-e(u)\big)\tl d_2(v)=(u-v)\tl d_2(v)e(u)+\frac{u-v}{u+v+1}\tl d_2(v)\big(e(u)-f(v)\big).
\eeq
Applying \eqref{sts} with $i=l=1$ and $j=k=2$, we have
\begin{align*}
(v-u)\tl s_{21}(v)s_{12}(u)+&\,\tl s_{21}(v)s_{12}(u)+\tl s_{22}(v)s_{22}(u)\\
=&\,(v-u)s_{12}(u)\tl s_{21}(v)+s_{11}(u)\tl s_{11}(v)+s_{12}(u)\tl s_{21}(v),
\end{align*}
which is equivalent to
\begin{align*}
(u-v)&\, \tl d_2(v)f(v)d_1(u)e(u)-\tl d_2(v)f(v)d_1(u)e(u)+\tl d_2(v)\big(d_2(u)+f(u)d_1(u)e(u)\big)\\
=&\, (u-v)d_1(u)e(u)\tl d_2(v)f(v)+d_1(u)\big(\tl d_1(v)+e(v)\tl d_2(v)f(v)\big)-d_1(u)e(u)\tl d_2(v)f(v).
\end{align*}
We rewrite it further as
\begin{align} \label{i1j2}
\begin{split}
    \tl d_2(v)\Big((u-v)f(v)&d_1(u) -\big(f(v)-f(u)\big)d_1(u)\Big)e(u)+\tl d_2(v)d_2(u)\\
=\, &d_1(u)\Big((u-v)e(u)\tl d_2(v)+\big(e(v)-e(u)\big)\tl d_2(v)\Big)f(v)+d_1(u)\tl d_1(v).
\end{split}
\end{align}

Substituting \eqref{o2-5b} and \eqref{o2-6c} on the left- and right-hand sides of \eqref{i1j2} and then multiplying by $\tl d_1(u)d_2(v)$ on the left, we obtain that
\begin{align*}
(u-v)f(v)e(u)+&\,\frac{u-v}{u+v+1}\big(e(u)-f(v)\big)e(u)+\tl d_1(u)d_2(u)\\
=&\, (u-v)e(u)f(v)+\frac{u-v}{u+v+1}\big(e(u)-f(v)\big)f(v)+\tl d_1(v)d_2(v),
\end{align*}
proving \eqref{eq:o2-8}.

\mybox{Equation \eqref{eq:o2-3}}. 
Since $e(u)-f(v)$ has a zero at $v=-u-1$ by \eqref{eq:o2-2}, $\frac{1}{u+v+1}\big(e(u)-f(v)\big)^2$ approaches 0 as $v$ tends to $-u-1$. Then \eqref{eq:o2-8} with $v$ tending to $-u-1$ gives us \eqref{eq:o2-3}.

\mybox{Equations \eqref{eq:o2-9}--\eqref{eq:o2-10}}.  
Replacing $v$ by $-v-1$ in \eqref{eq:o2-8} and then using $f(-v-1)=e(v)$, we prove \eqref{eq:o2-9}. Equation~\eqref{eq:o2-10} follows from \eqref{eq:o2-9} by applying the anti-automorphism $\eta$ and Lemma~ \ref{lem:tau}.
\end{proof}

 Introduce the following series 
\begin{align*}
    b(u)&:=\sum_{r\gge 0} b_ru^{-r-1}=f(u-\tfrac12),
    \\
h(u)&:=  1+ \sum_{r\gge 0} h_ru^{-r-1} =\tl d_1(u-\tfrac12)d_2(u-\tfrac12).
\end{align*}
Due to \eqref{eq:o2-2}, we have $b(u)=e(-u-\tfrac12)$. It follows from \eqref{eq:o2-3} that
$$h(u)=h(-u),$$
which is equivalent to that $h_{2r}=0$, for all $r \in \bN$. 
\begin{lem}\label{sx2l}
We have the following relations in $\X_2$ in generating series:
\begin{align}
[h(u),h(v)] &=0,\label{hhu}\\
(u-v)[b(u),b(v)] &=-\big(b(u)-b(v)\big)^2-\frac{u-v}{u+v}\big(h(u)-h(v)\big),\label{bbu}\\
(u^2-v^2)[h(u),b(v)] &=(2v+1)h(u)b(v)+(2v-1)b(v)h(u)\label{hbu}\\
 +(u-v+1)b(-u) &h(u) -2(u+v)\tl d_1(u-\tfrac12)b(u)d_2(u-\tfrac12)+(u-v-1)h(u)b(-u).\nonumber
\end{align}
\end{lem}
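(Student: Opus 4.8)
The plan is to obtain the three asserted relations by rewriting the rank-one relations of Lemma~\ref{lem:o2-relations} after the substitutions $u\mapsto u-\tfrac12$, $v\mapsto v-\tfrac12$ that define $b(u)$ and $h(u)$. Recall $b(u)=f(u-\tfrac12)=e(-u-\tfrac12)$ and $h(u)=\tl d_1(u-\tfrac12)d_2(u-\tfrac12)$, and that $h(u)=h(-u)$ by \eqref{eq:o2-3}.

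First, \eqref{hhu} is immediate: it is just \eqref{eq:o2-1} together with $[d_1,d_2]=0$, since $h(u)$ is a product of series in $\tl d_1$ and $d_2$ evaluated at shifted arguments, and all of $d_1(u),d_2(u),d_1(v),d_2(v)$ pairwise commute by \eqref{eq:o2-1} (and $[d_1(u),\tl d_2(v)]=0$ by Lemma~\ref{stslem}). Second, \eqref{bbu} comes from \eqref{eq:o2-10}: substituting $u\mapsto u-\tfrac12$, $v\mapsto v-\tfrac12$ into
\[
[f(u),f(v)]=-\frac{1}{u-v}\big(f(u)-f(v)\big)^2-\frac{1}{u+v+1}\big(\tl d_1(u)d_2(u)-\tl d_1(v)d_2(v)\big)
\]
turns $u-v$ into $u-v$, $u+v+1$ into $u+v$, $f(u)\mapsto b(u)$, $f(v)\mapsto b(v)$, and $\tl d_1(u)d_2(u)\mapsto h(u)$; multiplying through by $u-v$ gives exactly \eqref{bbu}.

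The main work is \eqref{hbu}. I would start from the commutator of $h(u)=\tl d_1(u-\tfrac12)d_2(u-\tfrac12)$ with $b(v)=f(v-\tfrac12)$, expanding
\[
[\tl d_1 d_2,\, f] = \tl d_1[d_2,f] + [\tl d_1,f] d_2 = \tl d_1[d_2,f] - \tl d_1[d_1,f]\tl d_1 d_2,
\]
and then substituting the explicit formulas \eqref{eq:o2-5} for $[d_1(u),f(v)]$ and \eqref{eq:o2-7} for $[d_2(u),f(v)]$. After the shift $u\mapsto u-\tfrac12,\ v\mapsto v-\tfrac12$ the two denominators $u-v$ and $u+v+1$ become $u-v$ and $u+v$, and one must use $f(u-\tfrac12)=b(u)$, $e(u-\tfrac12)=b(-u)$ (from \eqref{eq:o2-2}), and regroup $\tl d_1(u-\tfrac12)\,f(u-\tfrac12)\,d_2(u-\tfrac12)=\tl d_1(u-\tfrac12)\,b(u)\,d_2(u-\tfrac12)$ to produce the middle term $-2(u+v)\tl d_1(u-\tfrac12)b(u)d_2(u-\tfrac12)$ on the right of \eqref{hbu}. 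Clearing the denominator $u^2-v^2=(u-v)(u+v)$ and carefully collecting the $h(u)b(v)$, $b(v)h(u)$, $b(-u)h(u)$, $h(u)b(-u)$ terms with their polynomial coefficients should yield \eqref{hbu}. The one subtlety to watch is the ordering of the noncommuting factors $\tl d_1$, $d_2$, $f$, $e$ at the various arguments: \eqref{eq:o2-5} and \eqref{eq:o2-7} each have one term with $d_i$ on the left and one with $d_i$ on the right, so the regrouping into $h(u)b(v)$ versus $b(v)h(u)$ (note the distinct coefficients $2v+1$ and $2v-1$) and the appearance of the ``mixed'' term $\tl d_1 b d_2$ must be tracked with care; this bookkeeping is where I expect essentially all the difficulty to lie, the rest being mechanical substitution.
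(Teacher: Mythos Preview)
Your proposal is correct and follows essentially the same route as the paper. For \eqref{hhu} and \eqref{bbu} the paper says exactly what you do: they are direct consequences of \eqref{eq:o2-1} and \eqref{eq:o2-10} after the shift. For \eqref{hbu} the paper also expands $[\tl d_1 d_2,\,\cdot\,]$ by the Leibniz rule and substitutes the rank-one commutator formulas; the only cosmetic difference is that the paper computes $[\tl d_1(u)d_2(u),e(v)]$ using \eqref{eq:o2-4}/\eqref{eq:o2-6} and then sets $u\mapsto u-\tfrac12$, $v\mapsto -v-\tfrac12$ (so that $e(-v-\tfrac12)=b(v)$), whereas you propose computing $[\tl d_1(u)d_2(u),f(v)]$ using \eqref{eq:o2-5}/\eqref{eq:o2-7} and setting $v\mapsto v-\tfrac12$; by \eqref{eq:o2-2} these are the same calculation. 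The ``bookkeeping'' you flag is exactly where the paper invests its effort: it first records the auxiliary identity
\[
2\tl d_1(u)e(v)d_2(u)-h(u)e(v)-e(v)h(u)=\tfrac{1}{u-v}\big[(e(u)-e(v))h(u)-h(u)(e(u)-e(v))\big]
\]
(obtained by subtracting $\tl d_1[d_2,e]$ from $[\tl d_1,e]d_2$), which converts the mixed-order terms $\tl d_1\,e(v)\,d_2$ into $h(u)e(v)$ and $e(v)h(u)$ plus lower-order corrections, leaving only the single surviving mixed term $\tl d_1(u-\tfrac12)b(u)d_2(u-\tfrac12)$ that appears in \eqref{hbu}. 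You will want the $f$-analogue of this identity when you carry out your version.
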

\begin{proof}
The first two relations \eqref{hhu}--\eqref{bbu} follow directly from \eqref{eq:o2-1} and \eqref{eq:o2-10} in Lemma \ref{lem:o2-relations}. It remains to prove the relation \eqref{hbu}.

It follows from \eqref{eq:o2-4} that
\beq\label{proof7}
[\tl d_1(u),e(v)]=\frac{1}{u-v}\big(e(u)-e(v)\big)\tl d_1(v)+\frac{1}{u+v+1}\tl d_1(u)\big(f(u)-e(v)\big).
\eeq
Combining this with  \eqref{eq:o2-6}, we have
\beq\label{proof6}
\begin{split}
2\tl d_1(u)e(v)d_2(u)-&\,\tl d_1(u)d_2(u)e(v)-e(v)\tl d_1(u)d_2(u)\\
=&\, [\tl d_1(u),e(v)]d_2(u)-\tl d_1(u)[d_2(u),e(v)]\\
=&\, \frac{1}{u-v}\big(e(u)-e(v)\big)\tl d_1(u)d_2(u)-\frac{1}{u-v}\tl d_1(u)d_2(u)\big(e(u)-e(v)\big).
\end{split}
\eeq
Below it suffices for us to focus only on the terms containing $e(v)$. By \eqref{eq:o2-6}, \eqref{proof7}, and \eqref{proof6},  we have
\begin{align*}
[\tl d_1(u)& d_2(u),e(v)]=[\tl d_1(u),e(v)]d_2(u)+\tl d_1(u)[d_2(u),e(v)]\\
=&\, \frac{1}{u-v}\big(e(u)-e(v)\big)\tl d_1(u)d_2(u)+\frac{1}{u+v+1}\tl d_1(u)\big(e(v)-f(u)\big)d_2(u)\\
+&\, \frac{1}{u-v}\tl d_1(u)d_2(u)\big(e(u)-e(v)\big)+\frac{1}{u+v+1}\tl d_1(u)\big(e(v)-f(u)\big)d_2(u)\\
=&\, \frac{1}{u-v}\big(e(u)-e(v)\big)\tl d_1(u)d_2(u)+\frac{1}{u+v+1}e(v)\tl d_1(u)d_2(u)-\frac{1}{u+v+1}\tl d_1(u)f(u)d_2(u)\\+&\,\frac{1}{u-v}\tl d_1(u)d_2(u)\big(e(u)-e(v)\big)+\frac{1}{u+v+1}\tl d_1(u)d_2(u)e(v)-\frac{1}{u+v+1}\tl d_1(u)f(u)d_2(u)\\
&\qquad \qquad + \frac{1}{u+v+1}\Big(\frac{1}{u-v}\big(e(u)-e(v)\big)\tl d_1(u)d_2(u)-\frac{1}{u-v}\tl d_1(u)d_2(u)\big(e(u)-e(v)\big)\Big)\\
=&\, \frac{1}{{(u-v)(u+v+1)}}\Big(-(2v+2)e(v)\tl d_1(u)d_2(u)-2v\tl d_1(u)d_2(u)e(v)\\
&\quad\quad\, +(u+v+2)e(u)\tl d_1(u)d_2(u)-2(u-v)\tl d_1(u)f(u)d_2(u)+(u+v)\tl d_1(u)d_2(u)e(u)\Big).
\end{align*}
Note that $b(u)=e(-u-\tfrac12)$ and $h(u)=\tl d_1(u-\tfrac12)d_2(u-\tfrac12)$, substituting $u$ and $v$ by $u-\tfrac12$ and $-v-\tfrac12$ in the above equation proves the  relation \eqref{hbu}.
\end{proof}

Next we work out the relations for $\SX_2$. By definition, the subalgebra $\SX_2$ is generated by the coefficients of $b(u)$ and $h(u)$.

Throughout the paper, we shall use the following notation: 
\[
\{x,y\}=xy+yx,
\]
for $x, y$ in some algebra. Set $h_{-1}=1$.

\begin{prop}\label{sx2p}
The following relations hold in $\SX_2$ for $r,s\in\bN$:
\begin{align}
[h_r,h_s] &=0,\qquad\qquad h_{2r}=0,\label{hh}\\
[b_{r+1},b_s]-[b_r,b_{s+1}] &=\{b_r,b_s\}-2(-1)^rh_{r+s+1},\label{bb}\\
[h_{r+1},b_s]-[h_{r-1},b_{s+2}] &=2\{h_{r-1},b_{s+1}\}+[h_{r-1},b_s].\label{hb}
\end{align}
\end{prop}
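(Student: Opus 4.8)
The plan is to obtain each of \eqref{hh}, \eqref{bb}, \eqref{hb} by comparing coefficients of a single monomial $u^{-a}v^{-b}$ in the generating-series identities of Lemma~\ref{sx2l}, expanding throughout $\tfrac{1}{u\pm v}=\sum_{k\gge0}(\mp1)^k u^{-k-1}v^k$ (so that every side of every identity is an honest formal series in $u^{-1},v^{-1}$, with non-negative powers of $v$ permitted in the mixed rational terms). All three relations involve only the $b_r$ and $h_r$, and $\SX_2$ is by Definition~\ref{sxdef} the subalgebra of $\X_2$ generated by their coefficients, so each of them is automatically a relation in $\SX_2$. The relation \eqref{hh} is then immediate: $[h_r,h_s]=0$ is the coefficient of $u^{-r-1}v^{-s-1}$ in \eqref{hhu}, and $h_{2r}=0$ is the vanishing already recorded before Lemma~\ref{sx2l} as a consequence of $h(u)=h(-u)$.

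For \eqref{bb} I would extract the coefficient of $u^{-r-1}v^{-s-1}$, $r,s\in\bN$, from \eqref{bbu}. The left-hand side $(u-v)[b(u),b(v)]$ produces exactly $[b_{r+1},b_s]-[b_r,b_{s+1}]$. On the right, writing $-(b(u)-b(v))^2=-b(u)^2+b(u)b(v)+b(v)b(u)-b(v)^2$ and noting that $b(u)^2$ (resp.\ $b(v)^2$) contains only powers of $u$ (resp.\ of $v$), only the two cross terms survive and contribute $\{b_r,b_s\}$; and using $\tfrac{u-v}{u+v}=1+2\sum_{k\gge1}(-1)^k u^{-k}v^k$ together with $h(u)-h(v)=\sum_{m\gge0}h_m(u^{-m-1}-v^{-m-1})$, the unique monomial of the form $u^{-r-1}v^{-s-1}$ occurring in $-\tfrac{u-v}{u+v}(h(u)-h(v))$ arises from $u^{-k}v^{k-m-1}$ with $k=r+1$, $m=r+s+1$, with coefficient $-2(-1)^r h_{r+s+1}$. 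Adding the three contributions gives \eqref{bb}.

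For \eqref{hb} the key observation is that the last three terms on the right of \eqref{hbu}, namely $(u-v+1)b(-u)h(u)$, $-2(u+v)\tl d_1(u-\tfrac12)b(u)d_2(u-\tfrac12)$ and $(u-v-1)h(u)b(-u)$, are each a formal power series in $u^{-1}$ (or in $u$) multiplied by a polynomial in $v$ of degree $\lle1$; they therefore contain no negative power of $v$ and contribute nothing to the coefficient of $u^{-r}v^{-s-1}$ for any $s\in\bN$. Extracting that coefficient (using the convention $h_{-1}=1$) thus reduces to the left-hand side $(u^2-v^2)[h(u),b(v)]$, which gives $[h_{r+1},b_s]-[h_{r-1},b_{s+2}]$, and the first two right-hand terms $(2v+1)h(u)b(v)+(2v-1)b(v)h(u)$, which give $2\{h_{r-1},b_{s+1}\}+[h_{r-1},b_s]$; equating the two yields \eqref{hb}.

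Beyond routine bookkeeping, the one step I expect to require genuine care is the observation just used for \eqref{hb}: a priori it is not clear that \eqref{hbu} should restrict to a relation inside $\SX_2$ at all, since $b(-u)$ and $\tl d_1(u-\tfrac12)b(u)d_2(u-\tfrac12)$ are not manifestly expressible through the generators $b_r,h_r$; the point is precisely that these terms are invisible at the monomials $u^{-r}v^{-s-1}$ governing \eqref{hb}, so that no closure problem arises. Once this is in place, the remainder is straightforward coefficient comparison.
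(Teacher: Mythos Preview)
Your proposal is correct and follows essentially the same approach as the paper's proof: both extract \eqref{hh} and \eqref{hb} from \eqref{hhu} and \eqref{hbu} by reading off the coefficients of $u^{-r-1}v^{-s-1}$ and $u^{-r}v^{-s-1}$, and obtain \eqref{bb} from \eqref{bbu} via the expansion $\tfrac{u-v}{u+v}=1+2\sum_{k\gge1}(-1)^k u^{-k}v^k$. Your explicit observation that the last three terms in \eqref{hbu} carry only nonnegative powers of $v$ and hence do not contribute is exactly the reason the paper's terse coefficient comparison works.
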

\begin{proof} 
The relations \eqref{hh} and \eqref{hb} follow from \eqref{hhu} and \eqref{hbu}, respectively, by comparing the coefficients of $u^{-r-1}v^{-s-1}$ and $u^{-r}v^{-s-1}$. The relation \eqref{bb} is obtained from \eqref{bbu} using the expansion 
$\frac{u-v}{u+v}=-1+2\sum_{p\gge 0}(-1)^p v^pu^{-p}.$
\end{proof}
It will be proved later in greater generality (see Theorem \ref{mainthm2}) that the relations \eqref{hh}--\eqref{hb} are defining relations for $\SX_2$. 

\begin{rem}
We can rewrite \eqref{bbu} as
$$
[b(u),b(v)]=-\frac{1}{u-v}(b(u)-b(v))^2-\frac{1}{u+v}(h(u)-h(v)),
$$
which gives rise to the following alternative of \eqref{bb}:
\beq\label{bibi}
[b_r,b_s]=\sum_{p=0}^{r-1}b_{r+s-1-p}b_{p}-\sum_{p=0}^{s-1}b_{r+s-1-p}b_{p}+(-1)^rh_{r+s}.
\eeq
\end{rem}

\subsection{The rank 2 case}

In the rank 2 case (i.e., $N=3$), we have the following expressions with ``$(u)$" omitted for the matrices $S(u)$ and $\tl S(u)$ and their entries:
\begin{align} \label{SS3}
\begin{split}
    S&=\begin{bmatrix}
d_1 & d_1e_1& d_1e_{13}\\
f_1d_1 & d_2+f_1d_1e_1& d_2e_2+f_1d_1e_{13}\\
f_{31}d_1 & f_2d_2+f_{31}d_1e_{1} & d_3+f_2d_2e_2+f_{31}d_1e_{13}
\end{bmatrix},
\\
\wtl S&=\begin{bmatrix}
\tl d_1 +e_1\tl d_2 f_1+\tl e_{13}\tl d_3 \tl f_{31} & -e_1\tl d_2-\tl e_{13}\tl d_3 f_2& \tl e_{13}\tl d_3\\
-\tl d_2f_1-e_2\tl d_3 \tl f_{31} & \tl d_2+e_2\tl d_3f_1& -e_2\tl d_3\\
\tl d_3\tl f_{31} & -\tl d_3f_2 & \tl d_3
\end{bmatrix},
\end{split}
\end{align}
where $\tl e_{ij}$ and $\tl f_{ji}$ are defined in \eqref{eq:def-tilde-e-f}.
\begin{lem}\label{efij}
We have $e_{ij}(u)=f_{ji}(-u-i)$ for $1\lle i<j\lle 3$.
\end{lem}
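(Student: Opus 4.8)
The plan is to prove the three identities $e_{ij}(u) = f_{ji}(-u-i)$ for $1 \le i < j \le 3$, i.e., the cases $(i,j) = (1,2), (1,3), (2,3)$, by reducing each to the rank-one identity \eqref{eq:o2-2} via the embeddings studied in Section~\ref{sec:GD}. Recall from \eqref{eq:o2-2} that in $\X_2$ we have $f(u) = e(-u-1)$, equivalently $e(u) = f(-u-1)$; written in the notation $e_{12}, f_{21}$ this reads $e_{12}(u) = f_{21}(-u-1)$, which is exactly the case $i=1$, $j=2$ of the claim with $N=3$ instead of $N=2$. So first I would observe that the case $(1,2)$ follows from applying the homomorphism $\jmath \colon \X_2 \to \X_3$ of \eqref{jmath}: since $\jmath$ fixes $s_{ij}(u)$ for $1 \le i,j \le 2$, the quasi-determinant formulas show $\jmath$ sends $d_1(u), d_2(u), e_1(u), f_1(u)$ in $\X_2$ to $d_1(u), \dots$ but one must be careful — $d_2(u)$ in $\X_3$ involves only $s_{ij}$ with $i,j \le 2$, so indeed $\jmath$ sends the rank-one $e(u), f(u)$ to $e_1(u), f_{21}(u)$ in $\X_3$, giving $e_{12}(u) = f_{21}(-u-1)$ in $\X_3$.

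Next, for the case $(2,3)$, I would use the shift embedding $\vartheta_1 \colon \X_2 \to \X_3$ of \eqref{eq:theta-m}. By Corollary~\ref{theta}, $\vartheta_1$ sends $e_{12}(u + \tfrac12)$ to $e_{23}(u)$ and $f_{21}(u+\tfrac12)$ to $f_{32}(u)$. Applying $\vartheta_1$ to the rank-one identity $e_{12}(u) = f_{21}(-u-1)$ (now with $u$ replaced by $u + \tfrac12$): the left side becomes $e_{23}(u)$, and the right side, $f_{21}(-(u+\tfrac12)-1) = f_{21}((-u-2) + \tfrac12)$, becomes $f_{32}(-u-2)$. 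This is precisely $e_{23}(u) = f_{32}(-u-2)$, the case $i=2$, $j=3$.

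For the remaining case $(1,3)$, I would combine the first two cases with Lemma~\ref{lem:ei-generate-eij}, which gives $e_{13}(u) = [e_{12}(u), e_{23}^{(1)}]$ and $f_{31}(u) = [f_{32}^{(1)}, f_{21}(u)]$. Substituting $e_{12}(u) = f_{21}(-u-1)$ and noting $e_{23}^{(1)} = f_{32}^{(1)}$ (the $u^{-1}$-coefficient identity coming from case $(2,3)$ evaluated appropriately — more precisely, expanding $e_{23}(u) = f_{32}(-u-2)$ in powers of $u^{-1}$ shows the leading coefficients agree: $e_{23}^{(1)} = f_{32}^{(1)}$), we get
\[
e_{13}(u) = [f_{21}(-u-1), f_{32}^{(1)}] = -[f_{32}^{(1)}, f_{21}(-u-1)] \cdot (-1)^{?}
\]
so I must track the sign and the commutator order: $[e_{12}(u), e_{23}^{(1)}] = [f_{21}(-u-1), f_{32}^{(1)}] = -[f_{32}^{(1)}, f_{21}(-u-1)] = -f_{31}(-u-1)$. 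That gives a spurious sign, so instead I should use the formula for $f_{31}$ with the correctly matched argument. The clean route is: $f_{31}(-u-1) = [f_{32}^{(1)}, f_{21}(-u-1)] = [e_{23}^{(1)}, e_{12}(u)] = -[e_{12}(u), e_{23}^{(1)}] = -e_{13}(u)$; but the claim wants $e_{13}(u) = f_{31}(-u-1)$ in $\X_3$, which forces me to recheck — and indeed for $N=3$ the case $(1,3)$ asserts $e_{13}(u) = f_{31}(-u-1)$. To reconcile the sign I expect to need the correct form of Lemma~\ref{lem:ei-generate-eij} read with consistent bracket conventions; so the main obstacle is bookkeeping of signs and spectral shifts in this last case. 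Alternatively, and perhaps more robustly, I would apply the automorphism $\zeta_3$ of \eqref{zeta} together with Lemma~\ref{zetamap} to the already-proven cases, since $\zeta_3$ relates $e_{ij}$ to $\tl f_{i'j'}$ and swaps the roles symmetrically; or simply invoke $\jmath$ and $\vartheta_1$ do not reach $(1,3)$, so one genuinely needs the commutator argument. I expect the sign reconciliation in the $(1,3)$ case to be the only real subtlety, everything else being a direct transport of \eqref{eq:o2-2} along embeddings.
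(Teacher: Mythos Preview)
Your approach is essentially identical to the paper's: the cases $(1,2)$ and $(2,3)$ are transported from the rank-one identity \eqref{eq:o2-2} via the embeddings $\jmath$ and $\vartheta_1$, and the case $(1,3)$ is deduced from these via Lemma~\ref{lem:ei-generate-eij}. There is no need for $\zeta_3$ or any alternative route.

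The sign trouble you encountered in the $(1,3)$ case stems from a computational slip in reading off the coefficient $e_{23}^{(1)}$. Expanding $f_{32}(-u-2)$ in powers of $u^{-1}$, the series $(-u-2)^{-1} = -u^{-1} + 2u^{-2} - \cdots$ contributes $-f_{32}^{(1)}$ to the coefficient of $u^{-1}$; hence the identity $e_{23}(u)=f_{32}(-u-2)$ gives $e_2^{(1)} = -f_2^{(1)}$, not $e_2^{(1)} = f_2^{(1)}$ as you wrote. With this correction the commutator computation runs cleanly:
\[
e_{13}(u) = [e_1(u), e_2^{(1)}] = [f_1(-u-1), -f_2^{(1)}] = [f_2^{(1)}, f_1(-u-1)] = f_{31}(-u-1),
\]
exactly as required. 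This is precisely the paper's argument.
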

\begin{proof}
The formulas for $j=i+1$ are a direct consequence of \eqref{eq:o2-2} and Corollary \ref{theta} using the maps $\vartheta_0$ and $\vartheta_1$. In particular, we have $e_2^{(1)}=-f_2^{(1)}$. Using Lemma \ref{lem:ei-generate-eij}, we have
\[
e_{13}(u)=[e_{1}(u),e_2^{(1)}]=[f_1(-u-1),-f_2^{(1)}]=[f_2^{(1)},f_1(-u-1)]=f_{31}(-u-1).\qedhere
\]
\end{proof}

\begin{lem}
We have
\begin{align}
(u-v)[e_1(u),e_2(v)]&= e_1(u)e_2(v)-e_1(v)e_2(v)-e_{13}(u)+e_{13}(v),\label{e1e2u}\\
[e_2^{(1)},\tl d_3(v)]&= -e_2(v)\tl d_3(v)-\tl d_3(v)f_2(v).\label{e1td2}
\end{align}
\end{lem}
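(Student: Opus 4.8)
The plan is to extract both identities directly from the defining relations in the rank~2 case, exactly as was done for the rank~1 relations of Lemma~\ref{lem:o2-relations}, by expanding the appropriate specializations of the quaternary relation \eqref{quater u} (or its inverse form \eqref{sts}) through the explicit Gauss-decomposed matrices \eqref{SS3}. The key point is that each identity involves only indices in a single size-2 or size-3 block, so the computations reduce to manipulations already carried out, combined with the rank~1 formulas imported via the embeddings $\vartheta_0,\vartheta_1$ and Corollary~\ref{theta}.

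For \eqref{e1e2u}: I would start from the quaternary relation \eqref{quater u} with the index choice $i=1$, $j=k=2$, $l=3$ (or a nearby choice adapted to isolate $e_1$ and $e_2$), which produces a relation among the $(1,2)$- and $(2,3)$- and $(1,3)$-entries of $S(u)$ and $S(v)$. Substituting the expressions $s_{12}=d_1e_1$, $s_{23}=d_2e_2+f_1d_1e_{13}$, $s_{13}=d_1e_{13}$ from \eqref{SS3}, and then clearing the diagonal factors $d_1, d_2$ by multiplying on the left by $\tl d_1(u)$ and on the right by suitable $\tl d_j(v)$ (using the commutativity of the $d_i$'s from Corollary~\ref{cor:commu-d-e-f} and Lemma~\ref{comsubalg} to move them past the off-diagonal currents with mixed blocks), the cross terms involving $f_1$ should cancel or collapse, leaving \eqref{e1e2u}. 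Alternatively, one can deduce \eqref{e1e2u} more quickly by applying Lemma~\ref{lem:ei-generate-eij}, writing $e_{13}(u)=[e_1(u),e_2^{(1)}]$, and commuting; but extracting the $u$-$v$ coefficient structure still requires going back to \eqref{quater u} for the $[e_1(u),e_2(v)]$ bracket, so the Gauss-decomposition route is cleaner.

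For \eqref{e1td2}: here the zero-mode relation \eqref{eq:zero-mode-s} gives $[s_{23}^{(1)},s_{ij}(v)]$ or similar brackets for free, and $e_2^{(1)}=s_{23}^{(1)}$. I would compute $[e_2^{(1)},\tl d_3(v)]$ by noting $\tl d_3(v)=\tl s_{33}(v)$ (from the bottom-right entry of $\wtl S$ in \eqref{SS3}, since in the $3\times 3$ case $\tl d_3 = \tl s_{33}$), then invoking \eqref{sts} with $k=l=3$ and $i,j$ chosen so that $s_{ij}^{(1)}$ has a component equal to $e_2^{(1)}$ — concretely this is the bracket of the coefficient of $u^{-1}$ in the $(2,3)$-entry against $\tl s_{33}(v)$. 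Reading off the first moment in $u$ and translating $\tl s_{23}(v)=-e_2(v)\tl d_3(v)$, $\tl s_{33}(v)=\tl d_3(v)$, $\tl s_{32}(v)=-\tl d_3(v)f_2(v)$ via \eqref{SS3} yields \eqref{e1td2}. The main obstacle I anticipate is bookkeeping the $f_1$- and $f_{31}$-contributions in the $(2,3)$-entry of $S$ and the $\tl f_{31}$-contributions in $\wtl S$: one must check that all such ``lower-triangular'' cross terms either commute through (by Lemma~\ref{stslem}/Lemma~\ref{comsubalg}) or cancel in pairs, so that the final identities are genuinely confined to the $e_1,e_2,d_3$ currents as stated. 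Once that cancellation pattern is verified, both \eqref{e1e2u} and \eqref{e1td2} follow by routine coefficient comparison.
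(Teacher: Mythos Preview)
Your overall plan is right, but for \eqref{e1e2u} the choice of \eqref{quater u} over \eqref{sts} is not merely a matter of taste: it is the wrong choice, and the ``cross terms involving $f_1$ should cancel or collapse'' step is where the proposal breaks. With $i=1,j=k=2,l=3$ in \eqref{quater u} the right-hand side contains $s_{21}(v)s_{32}(u)=f_1(v)d_1(v)\big(f_2(u)d_2(u)+f_{31}(u)d_1(u)e_1(u)\big)$ and similar pieces, which genuinely drag in $f_2,f_{31}$; there is no block-commutativity argument that kills these, and you would be forced into a system of several quaternary specializations rather than a single clean identity. The paper instead uses \eqref{sts} with exactly the indices you named: because $s_{12}(u)=d_1(u)e_1(u)$ lies in the top row of $S$ while $\tl s_{23}(v)=-e_2(v)\tl d_3(v)$, $\tl s_{13}(v)=(e_1(v)e_2(v)-e_{13}(v))\tl d_3(v)$, $\tl s_{33}(v)=\tl d_3(v)$ lie in the last column of $\wtl S$, \emph{no $f$'s appear at all}. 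One line of substitution into $(u-v)[s_{12}(u),\tl s_{23}(v)]=s_{11}(u)\tl s_{13}(v)+s_{12}(u)\tl s_{23}(v)+s_{13}(u)\tl s_{33}(v)$, followed by peeling off $d_1(u)$ on the left and $\tl d_3(v)$ on the right (both commute past what remains by Corollary~\ref{cor:commu-d-e-f}), gives \eqref{e1e2u} immediately.

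For \eqref{e1td2} your direct route via \eqref{sts} would work, but the paper does something quicker that you set up yourself in the first paragraph and then don't use: take the $u^{-1}$-coefficient of the rank-1 relation \eqref{o2-6b}, which reads $[e^{(1)},\tl d_2(v)]=-e(v)\tl d_2(v)-\tl d_2(v)f(v)$, and then apply $\vartheta_1$ (Corollary~\ref{theta}) to shift all indices up by one. This is a one-line derivation and avoids any bookkeeping with $\wtl S$ in rank~2.
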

\begin{proof}
Setting $i=1$, $j=k=2$, and $l=3$ in \eqref{sts}, we obtain that 
\[
(u-v)[s_{12}(u),\tl s_{23}(v)]=s_{11}(u)\tl s_{13}(v)+s_{12}(u)\tl s_{23}(v)+s_{13}(u)\tl s_{33}(v)
\]
which can be rewritten via \eqref{SS3} as 
\[
(u-v)[d_1(u)e_1(u),-e_2(v)\tl d_3(v)]=d_1(u)\big(-e_{13}(v)+e_1(v)e_2(v)-e_1(u)e_2(v)+e_{13}(u)\big)\tl d_3(v).
\]
Then the relation \eqref{e1e2u} follows by cancelling out $d_1(u)$ and $\tl d_3(v)$ since $\tl d_3(v)$ commutes with $d_1(u)$ and $e_1(u)$ while $d_1(u)$ commutes with $e_2(v)$; here we have applied Corollary \ref{cor:commu-d-e-f}.

The relation \eqref{e1td2} follows from \eqref{o2-6b} by taking the coefficients of $u^{-1}$ and applying the map $\vartheta_1$ in \eqref{eq:theta-m}; here we have applied Lemma \ref{lem:theta} and Corollary \ref{theta}.
\end{proof}

\begin{lem}
We  have
\beq\label{e13f2}
[e_{13}(u),e_2(v)]=e_2(v)[e_1(u),e_2(v)]+\frac{1}{u+v+1}\tl d_2(v)d_3(v)\big(f_1(v)-e_1(u)\big).
\eeq
\end{lem}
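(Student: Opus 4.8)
The plan is to mimic the derivation of \eqref{e1e2u}: extract the desired identity from a suitable specialization of the $S\tl S$-relation \eqref{sts}, then clear the diagonal factors $d_i(u),\tl d_j(v)$ using the commutation relations in Corollary \ref{cor:commu-d-e-f} and Lemma \ref{stslem}. Concretely, since $e_{13}(u)$ appears in the $(1,3)$-entry of $S(u)$ via $s_{13}(u)=d_1(u)e_{13}(u)$, and $e_2(v)$ appears in the $(2,3)$-entry of $\wtl S(v)$ via $\tl s_{23}(v)=-e_2(v)\tl d_3(v)$, I would set $i=1$, $j=3$, $k=2$, $l=3$ in \eqref{sts}. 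With these indices $\delta_{jk}=\delta_{il}=\delta_{jl}=0$ and $\delta_{ik}=0$ as well, so the right-hand side collapses to only the $\delta_{jl}$ terms; in fact with $j=l=3$ the surviving contributions are $(u-v)\sum_a \tl s_{2a}(v)s_{1a}(u) - \sum_a \tl s_{2a}(v)s_{a1}(u)$, and the left-hand side is $(u^2-v^2)[s_{13}(u),\tl s_{23}(v)]$. I would then rewrite every $s_{**}(u)$ and $\tl s_{**}(v)$ via the Gauss decomposition \eqref{SS3}.

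The next step is bookkeeping: on the left-hand side substitute $s_{13}(u)=d_1(u)e_{13}(u)$ and $\tl s_{23}(v)=-e_2(v)\tl d_3(v)$, and on the right-hand side expand the two sums over $a=1,2,3$ using the entries of $S$ and $\wtl S$ from \eqref{SS3}. Using Corollary \ref{cor:commu-d-e-f} and Lemma \ref{stslem}, the series $d_1(u)$ commutes with $e_2(v)$, $f_2(v)$, $\tl d_3(v)$, and $e_{23}$-type objects, while $\tl d_3(v)$ commutes with $d_1(u)$, $e_1(u)$, $e_{13}(u)$; this lets me pull $d_1(u)$ out to the far left of both sides and $\tl d_3(v)$ out to the far right of both sides, cancelling them (they are invertible). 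What remains should be an identity among $e_1,e_2,e_{13},f_1,f_2$ and a single $\tl d_2(v)d_3(v)$ factor coming from the $\tl s_{21}(v)=-\tl d_2(v)f_1(v)-e_2(v)\tl d_3(v)\tl f_{31}(v)$ term paired with $s_{11}(u)=d_1(u)$ (after the $d_1(u)$ cancellation this becomes $\tl d_2(v)f_1(v)$, and the $\tl e_{13}$ contribution should reassemble into $\tl d_2(v)d_3(v)e_1(u)$ with the right sign). Simplifying the $u\pm v$ coefficients and dividing through by $(u-v)(u+v+1)$ should then produce \eqref{e13f2}; along the way one uses $[e_1(u),e_2(v)]$ from \eqref{e1e2u} and $e_{13}(v)=[e_1(v),e_2^{(1)}]$ from Lemma \ref{lem:ei-generate-eij} to recognize the commutator $[e_1(u),e_2(v)]$ appearing on the right.

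I expect the main obstacle to be the algebraic reorganization after the $d_1(u),\tl d_3(v)$ cancellations: the raw identity from \eqref{sts} will contain several terms of the form (coefficient)$\times$(product of two currents) in an order dictated by the $S\tl S$-relation, and matching them against the target form $e_2(v)[e_1(u),e_2(v)] + \tfrac{1}{u+v+1}\tl d_2(v)d_3(v)(f_1(v)-e_1(u))$ requires carefully tracking which factors are to the left and which to the right, and combining the $(u+v+1)$ and $(u-v)$ pieces correctly. In particular, isolating exactly the $\tl d_2(v)d_3(v)$-coefficient (as opposed to other $\tl d$-products that might a priori appear) and verifying that the non-$\tl d_2 d_3$ terms collapse precisely to $(u-v)e_2(v)[e_1(u),e_2(v)]$ is the delicate point; I would handle it by first moving all $e_2(v)$ factors to the left using Corollary \ref{cor:commu-d-e-f} (noting $e_2(v)$ commutes with $d_1(u)$ but not with $e_{13}(u)$, so a commutator term $[e_{13}(u),e_2(v)]$ re-emerges and must be tracked consistently), then reading off the two coefficients of $(u-v)$ and $1$ in the polynomial identity. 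An alternative route, if the direct computation is too unwieldy, is to apply the map $\vartheta_1$ of \eqref{eq:theta-m} together with Corollary \ref{theta} to a rank-one relation, but since $e_{13}$ is genuinely a rank-two object this seems less likely to work cleanly than the direct \eqref{sts} approach.
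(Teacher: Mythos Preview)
Your central commutation claim is false: $\tl d_3(v)$ does \emph{not} commute with $e_{13}(u)$. Corollary~\ref{cor:commu-d-e-f} only asserts $[d_i(u),e_j(v)]=0$ for $i\neq j,j+1$ with $e_j=e_{j,j+1}$; it says nothing about $e_{13}$. In fact, writing $e_{13}(u)=[e_1(u),e_2^{(1)}]$ via Lemma~\ref{lem:ei-generate-eij}, using the Jacobi identity together with $[e_1(u),\tl d_3(v)]=0$ and \eqref{e1td2}, one finds
\[
[e_{13}(u),\tl d_3(v)]=\big[e_1(u),[e_2^{(1)},\tl d_3(v)]\big]=-[e_1(u),e_2(v)]\,\tl d_3(v)-\tl d_3(v)\,[e_1(u),f_2(v)],
\]
which is nonzero. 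Your plan of stripping $\tl d_3(v)$ off to the far right therefore collapses, and the left-hand side does not reduce to $d_1(u)[e_{13}(u),e_2(v)]\tl d_3(v)$ as you need.

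The paper takes a different specialization, $(i,j,k,l)=(1,3,3,2)$, so that only $\delta_{jk}$ survives and \eqref{sts} yields the clean identity $(u-v)[s_{13}(u),\tl s_{32}(v)]=\sum_a s_{1a}(u)\tl s_{a2}(v)$. Because $\tl s_{32}(v)=-\tl d_3(v)f_2(v)$, this gives a relation for $[e_{13}(u),f_2(v)]$ rather than for $[e_{13}(u),e_2(v)]$; the paper still must handle the nontrivial commutator $[e_{13}(u),\tl d_3(v)]$ and does so exactly by the computation displayed above. Once the relation \eqref{o3pf2} for $[e_{13}(u),f_2(v)]$ is obtained, the anti-automorphism $\eta$ together with the substitution $u\mapsto -u-1$ and Lemma~\ref{efij} converts it into \eqref{e13f2}. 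Your choice $(1,3,2,3)$, besides the $\tl d_3$ issue, produces on the right the extra sum $\sum_a \tl s_{2a}(v)s_{a1}(u)$ containing $s_{21}(u)=f_1(u)d_1(u)$ and $s_{31}(u)=f_{31}(u)d_1(u)$; these $f$-series in the variable $u$ are absent from the target identity and would require further nontrivial cancellations.
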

\begin{proof}
Applying \eqref{sts} with $i=1$, $j=k=3$, and $l=2$, one obtains
\[
(u-v)[s_{13}(u),\tl s_{32}(v)]=s_{11}(u)\tl s_{12}(v)+s_{12}(u)\tl s_{22}(v)+s_{13}(u)\tl s_{32}(v).
\]
Applying \eqref{SS3} and cancelling $d_1(u)$ from the left (thanks to the commutativity of $d_1(u)$ and $\tl s_{32}(v)$), we have
\beq\label{o3pf1}
\begin{split}
(u-v)[e_{13}(u),\tl d_3(v)f_2(v)]=&\,e_1(v)\tl d_2(v)+\tl e_{13}(v)\tl d_3(v)f_2(v)\\
-&\,e_1(u)\tl d_2(v)-e_1(u)e_2(v)\tl d_3(v)f_2(v)+e_{13}(u)\tl d_3(v)f_2(v).
\end{split}
\eeq
By \eqref{e1e2u}, the right-hand side of \eqref{o3pf1} is equal to
\begin{align*}
&\, \big(e_1(v)-e_1(u)\big)\tl d_2(v)+\big(\tl e_{13}(v)-e_1(u)e_2(v)+e_{13}(u)\big)\tl d_3(v)f_2(v)
\\=&\, \big(e_1(v)-e_1(u)\big)\tl d_2(v)+\big(e_1(v)e_2(v)-e_{13}(v)-e_1(u)e_2(v)+e_{13}(u)\big)\tl d_3(v)f_2(v)\\
=&\, \big(e_1(v)-e_1(u)\big)\tl d_2(v)-(u-v)[e_1(u),e_2(v)]\tl d_3(v)f_2(v).
\end{align*}
On the other hand, the left-hand side of \eqref{o3pf1} is equal to
\begin{align*}
(u-v)&[e_{13}(u),\tl d_3(v)]f_2(v)+(u-v)\tl d_3(v)[e_{13}(u),f_2(v)]\\
\stackrel{\eqref{eq:ei-generate-eij}}{=}\  &\ (u-v)\big[[e_1(u),e_2^{(1)}],\tl d_3(v)\big]f_2(v)+(u-v)\tl d_3(v)[e_{13}(u),f_2(v)]
\\= \ \ & \, \,   (u-v)\big[e_1(u),[e_2^{(1)},\tl d_3(v)]\big]f_2(v)+(u-v)\tl d_3(v)[e_{13}(u),f_2(v)]
\\ \stackrel{\eqref{e1td2}}{=}&\  (u-v)[e_1(u),-e_2(v)\tl d_3(v)-\tl d_3(v)f_2(v)]f_2(v)+(u-v)\tl d_3(v)[e_{13}(u),f_2(v)]\\
= \ \hspace{0.1cm}&\, -(u-v)[e_1(u),e_2(v)]\tl d_3(v)f_2(v)-(u-v)\tl d_3(v)[e_1(u),f_2(v)]f_2(v)\\
& \qquad\qquad\qquad \qquad\qquad\qquad \qquad\qquad\qquad\qquad+(u-v)\tl d_3(v)[e_{13}(u),f_2(v)].
\end{align*}
Therefore, 
\[
\big(e_1(v)-e_1(u)\big)\tl d_2(v)=-(u-v)\tl d_3(v)[e_1(u),f_2(v)]f_2(v)+(u-v)\tl d_3(v)[e_{13}(u),f_2(v)].
\]
Multiplying $d_3(v)$ from the left and noting that $d_3(v)$ commutes with $\big(e_1(v)-e_1(u)\big)$, we have
\beq\label{o3pf2}
[e_{13}(u),f_2(v)]=\frac{1}{u-v}\big(e_1(v)-e_1(u)\big)\tl d_2(v)d_3(v)+[e_1(u),f_2(v)]f_2(v).
\eeq
The desired relation \eqref{e13f2} now follows by applying the anti-automorphism $\eta$ to \eqref{o3pf2} with the substitution $u\mapsto -u-1$; here we have applied Lemma \ref{lem:tau} and Lemma \ref{efij}.
\end{proof}

\begin{lem}\label{e1e2e2lem}
We have
\beq\label{e1e2e2}
\big[[e_1(u),e_2(v)],e_2(v)\big]=\frac{\tl d_2(v)d_3(v)}{u-v-1}\Big(\frac{f_1(v)-e_1(v+1)}{2v+2}-\frac{f_1(v)-e_1(u)}{u+v+1}\Big).
\eeq
\end{lem}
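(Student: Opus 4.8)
The plan is to derive \eqref{e1e2e2} by taking the commutator of \eqref{e13f2} with $e_2(v)$ once more and then eliminating the auxiliary currents $e_{13}(u)$ and $\tl d_2(v)d_3(v)$ in favor of the rank-1 generators. First I would rewrite \eqref{e13f2} in the form
\[
[e_{13}(u),e_2(v)]-e_2(v)[e_1(u),e_2(v)]=\frac{1}{u+v+1}\,\tl d_2(v)d_3(v)\bigl(f_1(v)-e_1(u)\bigr),
\]
and observe that the left-hand side, upon substituting $u\mapsto v+1$ (where $e_{13}(v+1)=[e_1(v+1),e_2^{(1)}]$ by Lemma~\ref{lem:ei-generate-eij}), together with the symmetry relation $e_2^{(1)}=-f_2^{(1)}$ from Lemma~\ref{efij}, will produce a second expression for $\tl d_2(v)d_3(v)$ in terms of $e$- and $f$-currents; this is what accounts for the first fraction $\frac{f_1(v)-e_1(v+1)}{2v+2}$ in \eqref{e1e2e2}. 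The key is that $\tl d_2(v)d_3(v)$ is (up to the shift in \eqref{hdef}) the rank-1 series $h$ for the embedded $\X_2$ sitting in positions $2,3$, so both its occurrences are the \emph{same} element and can be cancelled against each other.

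Concretely, I would apply the operator $[\,\cdot\,,e_2(v)]$ to \eqref{e13f2}. On the right-hand side, $[e_2(v),\tl d_2(v)d_3(v)]$ is controlled by the rank-1 relation \eqref{eq:o2-6} transported via $\vartheta_1$ (Corollary~\ref{theta}, Lemma~\ref{lem:theta}), which gives $[e_2(v),\tl d_2(v)d_3(v)]$ as $\tl d_2(v)d_3(v)$ times a linear combination of $e_2(v),f_2(v)$ — crucially these terms will either vanish (many $e_2(v)^2$-type contributions telescope since we are evaluating two arguments at the same point $v$) or get absorbed. On the left-hand side, $\bigl[[e_{13}(u),e_2(v)],e_2(v)\bigr]$ appears, and using \eqref{e13f2} again to substitute $[e_{13}(u),e_2(v)]$ we convert the whole left side into $\bigl[[e_1(u),e_2(v)],e_2(v)\bigr]$ up to terms involving $\tl d_2(v)d_3(v)$ and $\bigl[e_2(v),f_1(v)-e_1(u)\bigr]$; the latter bracket is zero by Corollary~\ref{cor:commu-d-e-f} since $|1-2|=1$ but the arguments of $e_1$ and $e_2$ differ — one must instead use \eqref{e1e2u} to handle $[e_1(u),e_2(v)]$. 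After collecting, the coefficient of $\tl d_2(v)d_3(v)$ on each side is a rational function of $u,v$, and matching them pins down the shape $\frac{1}{u-v-1}\bigl(\tfrac{\cdot}{2v+2}-\tfrac{\cdot}{u+v+1}\bigr)$.

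The cleanest route for the ``$\frac{1}{2v+2}$'' term is actually the specialization trick: multiply \eqref{e13f2} by $u+v+1$ and set $u=-v-1$, giving $(u+v+1)[e_{13}(u),e_2(v)]\big|_{u=-v-1}=0$ on the left after using $e_1(-v-1)=f_1(v)$ (Lemma~\ref{efij}), so that $\tl d_2(v)d_3(v)\bigl(f_1(v)-e_1(-v-1)\bigr)=0$ — wait, that is trivially zero, so instead one differentiates, or better, one evaluates the \emph{derived} relation at $u=v+1$ to express $\tl d_2(v)d_3(v)$ via $\bigl[[e_1(v+1),e_2(v)],e_2(v)\bigr]$ and $f_1(v)-e_1(v+1)$. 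Substituting this back eliminates $\tl d_2(v)d_3(v)$ entirely and yields \eqref{e1e2e2}.

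The main obstacle I anticipate is bookkeeping the terms that involve $\tl d_2(v)d_3(v)$ interacting with $e_2(v)$ and $f_2(v)$: one must repeatedly invoke the rank-1 identities \eqref{eq:o2-6}, \eqref{eq:o2-8}, \eqref{o2-6b} pulled back along $\vartheta_1$, keeping careful track of whether a given $e_2$ or $f_2$ sits to the left or right of $\tl d_2(v)d_3(v)$, and exploiting that all three currents are evaluated at the \emph{same} point $v$ so that the $\frac{1}{u-v}$-type kernels degenerate. The risk is sign errors and mismatched shifts $\tfrac{i}{2}$ in \eqref{bdef}--\eqref{hdef}; I would guard against this by checking the lowest components (coefficients of $u^{-1}v^{-2}$, say) of \eqref{e1e2e2} directly against the defining relation \eqref{quater u} for $\X_3$.
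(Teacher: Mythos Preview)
Your core plan — apply $[\,\cdot\,,e_2(v)]$ to \eqref{e13f2} and then ``use \eqref{e13f2} again'' on the resulting left-hand side — is circular. If you write \eqref{e13f2} as $A=B+C$ with $A=[e_{13}(u),e_2(v)]$, $B=e_2(v)[e_1(u),e_2(v)]$ and $C=\beta_1(u,v)$, then bracketing both sides with $e_2(v)$ gives $[A,e_2(v)]=[B,e_2(v)]+[C,e_2(v)]$, while substituting $A=B+C$ back into $[A,e_2(v)]$ produces exactly the same right-hand side. You obtain $0=0$, not a relation for $\big[[e_1(u),e_2(v)],e_2(v)\big]$. In particular nothing in this step isolates $\big[[e_1(u),e_2(v)],e_2(v)\big]$ without the stray prefactor $e_2(v)$, so no amount of bookkeeping of $\tl d_2(v)d_3(v)$-terms afterwards can recover the lemma.

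The paper's proof avoids this by starting from \eqref{e1e2u}, not \eqref{e13f2}: it multiplies the inner commutator by $(u-v)$, which introduces $e_{13}(u)-e_{13}(v)$, and only \emph{then} brackets with $e_2(v)$. Now \eqref{e13f2} is applied twice (at $u$ and at $v$) to eliminate the $e_{13}$-terms, and the $e_2(v)[e_1(\cdot),e_2(v)]$ contributions reassemble into $\big[[e_1(u),e_2(v)],e_2(v)\big]-\big[[e_1(v),e_2(v)],e_2(v)\big]$. This yields a genuine identity of the form
\[
(u-v-1)\big[[e_1(u),e_2(v)],e_2(v)\big]+\tfrac{1}{u+v+1}\tl d_2(v)d_3(v)\bigl(f_1(v)-e_1(u)\bigr)=\text{(expression independent of }u\text{)}.
\]
Your specialization idea is then exactly right: setting $u=v+1$ determines the $u$-independent side as $\tfrac{1}{2v+2}\tl d_2(v)d_3(v)\bigl(f_1(v)-e_1(v+1)\bigr)$, and feeding this back gives \eqref{e1e2e2}. (Two minor points: Corollary~\ref{cor:commu-d-e-f} does \emph{not} kill $[e_2(v),e_1(u)]$ or $[e_2(v),f_1(v)]$, since $|1-2|=1$, so that remark should be dropped; and the shifts in \eqref{bdef}--\eqref{hdef} play no role here, as the lemma is stated in the $e,f,d$ variables.)
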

\begin{proof}
We have
\begin{align*}
& (u-v)\big[[e_1(u),e_2(v)],e_2(v)\big]
\\
&\stackrel{\eqref{e1e2u}}{=}[e_1(u)e_2(v)-e_1(v)e_2(v)-e_{13}(u)+e_{13}(v),e_2(v)]\\
&\stackrel{\eqref{e13f2}}{=} [e_1(u),e_2(v)]e_2(v)-[e_1(v),e_2(v)]e_2(v)-e_2(v)[e_1(u),e_2(v)]+e_2(v)[e_1(v),e_2(v)]\\
&\qquad \qquad  -\frac{1}{u+v+1}\tl d_2(v)d_3(v)\big(f_1(v)-e_1(u)\big)+\frac{1}{2v+1}\tl d_2(v)d_3(v)\big(f_1(v)-e_1(v)\big)\\
&\;\; = \big[[e_1(u),e_2(v)],e_2(v)\big]- \big[[e_1(v),e_2(v)],e_2(v)\big]\\ &\qquad \qquad   -\frac{1}{u+v+1}\tl d_2(v)d_3(v)\big(f_1(v)-e_1(u)\big)+\frac{1}{2v+1}\tl d_2(v)d_3(v)\big(f_1(v)-e_1(v)\big).
\end{align*}
Therefore, 
\beq\label{o3pf3}
\begin{split}
-\big[[e_1(v),&\,e_2(v)], e_2(v)\big]+\frac{1}{2v+1}\tl d_2(v)d_3(v)\big(f_1(v)-e_1(v)\big)\\
=&\, (u-v-1) \big[[e_1(u),e_2(v)],e_2(v)\big]+\frac{1}{u+v+1}\tl d_2(v)d_3(v)\big(f_1(v)-e_1(u)\big).
\end{split}
\eeq
Note that the left-hand side of \eqref{o3pf3} is independent of $u$. Setting $u=v+1$, we find the left-hand side of \eqref{o3pf3} is equal to 
$\frac{1}{2v+2}\tl d_2(v)d_3(v)\big(f_1(v)-e_1(v+1)\big).$
Plugging this expression back into \eqref{o3pf3} proves the desired relation \eqref{e1e2e2}.
\end{proof}

Now we are ready to establish the Serre relations. Set
\begin{align*}
&\beta_1(u,v)=\frac{1}{u+v+1}\tl d_2(v)d_3(v)\big(f_1(v)-e_1(u)\big),\\
&\beta_2(u,v)=\frac{1}{u+v+2}\big(\tl d_2(u)d_3(u)-\tl d_2(v) d_3(v)\big),\\
&\beta_3(u,v)=\frac{\tl d_2(v)d_3(v)}{u-v-1}\Big(\frac{f_1(v)-e_1(v+1)}{2v+2}-\frac{f_1(v)-e_1(u)}{u+v+1}\Big).
\end{align*}
Then it follows from \eqref{e13f2}, \eqref{eq:o2-9}, and \eqref{e1e2e2}, respectively, that
\begin{align}
[e_{13}(u),e_2(v)]&=e_2(v)[e_1(u),e_2(v)]+\beta_1(u,v),\label{beta1}\\
[e_2(u),e_2(v)]&=\frac{1}{u-v}\big(e_2(u)-e_2(v)\big)^2+\beta_2(u,v),\label{beta2}\\
\big[[e_1(u),e_2(v)],e_2(v)\big]&=\beta_3(u,v).\label{beta3}
\end{align}
Comparing these relations with their counterparts of the Yangian $\rY(\gl_N)$ in \cite[\S5]{BK05}, one finds that the summands $\beta_i(u,v)$ are the extra terms appearing in the relations of twisted Yangians.

\begin{lem}\label{serrelem}
We have
\beq\label{serre}
\begin{split}
\big[[e_1(u),e_2(v)],e_2(w)\big]+\{v\leftrightarrow w\}&
\\
= \frac{1}{u-v}\Big( \beta_1(v,w)-\,\beta_1(u,w)&+\big(e_1(u)-e_1(v)\big)\beta_2(v,w)\\&+\beta_3(u,w)-\beta_3(v,w)\Big)+\{v\leftrightarrow w\}.
\end{split}
\eeq
\end{lem}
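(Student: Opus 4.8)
The plan is to compute the single bracket $\big[[e_1(u),e_2(v)],e_2(w)\big]$ explicitly, by taking the commutator with $e_2(w)$ of the relation \eqref{e1e2u}, and to package the outcome so that \eqref{serre} reduces to the vanishing of an explicit residual after antisymmetrization in $v\leftrightarrow w$. Concretely, I would first rewrite \eqref{e1e2u} as an identity for $(u-v)[e_1(u),e_2(v)]$ and take the commutator of both sides with $e_2(w)$. Expanding via $[AB,C]=A[B,C]+[A,C]B$, substituting \eqref{beta1} (which, being deduced from \eqref{sts}, holds for arbitrary spectral parameters) for $[e_{13}(u),e_2(w)]$ and $[e_{13}(v),e_2(w)]$, then writing $P\,e_2(v)-e_2(w)\,P=P\big(e_2(v)-e_2(w)\big)+[P,e_2(w)]$ with $P:=[e_1(u)-e_1(v),e_2(w)]$ and $[P,e_2(w)]=\beta_3(u,w)-\beta_3(v,w)$ by \eqref{beta3}, and finally replacing $[e_2(v),e_2(w)]$ by means of \eqref{beta2}, one obtains
\[
(u-v)\big[[e_1(u),e_2(v)],e_2(w)\big]=G(v,w)+R(v,w),
\]
where $G(v,w)$ is exactly the bracketed expression on the right-hand side of \eqref{serre} and
\[
R(v,w):=\frac{1}{v-w}\big(e_1(u)-e_1(v)\big)\big(e_2(v)-e_2(w)\big)^{2}+\big[e_1(u)-e_1(v),e_2(w)\big]\big(e_2(v)-e_2(w)\big).
\]

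Dividing this by $u-v$, adding the $v\leftrightarrow w$ counterpart, and observing that $\tfrac{1}{u-v}G(v,w)+\tfrac{1}{u-w}G(w,v)$ is precisely the right-hand side of \eqref{serre}, it remains only to prove
\[
\frac{R(v,w)}{u-v}+\frac{R(w,v)}{u-w}=0,\qquad\text{equivalently}\qquad (u-w)R(v,w)+(u-v)R(w,v)=0.
\]
This is the crux. Although $R(v,w)$ still contains $[e_1(u)-e_1(v),e_2(w)]$ and $R(w,v)$ contains $[e_1(u)-e_1(w),e_2(v)]$, one re-expands all such commutators via \eqref{e1e2u}; this reintroduces the currents $e_{13}(u),e_{13}(v),e_{13}(w)$, but a direct computation — resting on the elementary identities $\tfrac{u-w}{v-w}-\tfrac{u-v}{v-w}=1$, $\tfrac{u-w}{v-w}-1=\tfrac{u-v}{v-w}$ and $1+\tfrac{u-v}{v-w}=\tfrac{u-w}{v-w}$ — shows that every coefficient of $e_{13}(\cdot)$ in $(u-w)R(v,w)+(u-v)R(w,v)$ vanishes. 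What then survives is the combination $\big(e_1(u)-\tfrac{u-w}{v-w}e_1(v)+\tfrac{u-v}{v-w}e_1(w)\big)\big(e_2(v)-e_2(w)\big)^{2}$ produced by the first term of $R$, cancelled against the same combination with opposite sign produced by the re-expanded commutator term, yielding the asserted identity.

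The main obstacle is therefore purely computational: organizing the expansion of $R(v,w)$ and $R(w,v)$, keeping careful track of the rational prefactors in $u,v,w$, so that the $e_{13}$-contributions manifestly cancel and the remaining words in $e_1(\cdot)$ and $e_2(\cdot)$ telescope. No relation beyond \eqref{e1e2u} is needed in this last step — the relations \eqref{beta1}, \eqref{beta2}, \eqref{beta3} having already been used in the first reduction — so it is mechanical though lengthy; the first reduction to $G(v,w)+R(v,w)$ is by contrast short, and I anticipate no difficulty there.
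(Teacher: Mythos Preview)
Your proposal is correct and follows essentially the same route as the paper. Both arguments expand $(u-v)[[e_1(u),e_2(v)],e_2(w)]$ via \eqref{e1e2u} and then substitute \eqref{beta1}, \eqref{beta2}, \eqref{beta3}; the $\beta$-terms assemble into the right-hand side of \eqref{serre}, and the residual non-$\beta$ part (your $R(v,w)$, the paper's $\Xi(u,v,w)$ up to the factor $(u-w)(v-w)$) must be shown to cancel under the $v\leftrightarrow w$ symmetrization. The only difference is that the paper outsources this last step by citing \cite[Lemma~5.7]{BK05}, whereas you verify the identity $(u-w)R(v,w)+(u-v)R(w,v)=0$ directly by re-expanding the commutators through \eqref{e1e2u} and checking that all $e_{13}$-contributions and all remaining words cancel; your account of that cancellation is accurate.
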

Here and below $\{v\leftrightarrow w\}$ denotes a summand obtained from the prior one with $v, w$ switched.

\begin{proof}
By \eqref{e1e2u}, we have
\begin{align*}
&(u-v)(u-w)(v-w)\big[[e_1(u),e_2(v)],e_2(w)\big]
\\
&= (u-w)(v-w)[e_1(u)e_2(v)-e_1(v)e_2(v)+e_{13}(v)-e_{13}(u),e_2(w)].
\end{align*}

Applying \eqref{beta1} and \eqref{beta3} we bring this to the form
\begin{align*}
=&\, (u-w)(v-w)[e_1(u),e_2(w)]e_2(v)+(u-w)(v-w)e_1(u)[e_2(v),e_2(w)]\\
&-(u-w)(v-w)[e_1(v),e_2(w)]e_2(v)-(u-w)(v-w)e_1(v)[e_2(v),e_2(w)]\\
&+(u-w)(v-w)\big([e_1(v),e_2(w)]e_2(w)+ \beta_1(v,w)-\beta_3(v,w) \big)\\
&-(u-w)(v-w)\big([e_1(u),e_2(w)]e_2(w)+ \beta_1(u,w)-\beta_3(u,w) \big)
\end{align*}
which is rewritten by \eqref{beta2} as
\begin{align*}
=&\,(u-w)(v-w)[e_1(u),e_2(w)]e_2(v)-(u-w)(v-w)[e_1(v),e_2(w)]e_2(v)\\
&+(u-w)e_1(u)\big((e_2(v)-e_2(w))^2+ (v-w)\beta_2(v,w) \big)\\
&-(u-w)e_1(v)\big((e_2(v)-e_2(w))^2+ (v-w)\beta_2(v,w) \big)\\
&+(u-w)(v-w)\big([e_1(v),e_2(w)]e_2(w)+ \beta_1(v,w)-\beta_3(v,w) \big)\\
&-(u-w)(v-w)\big([e_1(u),e_2(w)]e_2(w)+ \beta_1(u,w)-\beta_3(u,w) \big)
\\
= &\;\Xi(u,v,w) +\La(u,v,w).
\end{align*}
Here $\Xi(u,v,w)$ (and resp., $\La(u,v,w)$) denotes the sum of the summands above which do not involve (and resp., involve) $\beta$'s; that is,
\begin{align*}
\Xi(u,v,w)  
:=&\,(u-w)(v-w)[e_1(u),e_2(w)]e_2(v)-(u-w)(v-w)[e_1(v),e_2(w)]e_2(v)\\
&+(u-w)e_1(u) (e_2(v)-e_2(w))^2 
-(u-w)e_1(v) (e_2(v)-e_2(w))^2\\
&+(u-w)(v-w) [e_1(v),e_2(w)]e_2(w) -(u-w)(v-w) [e_1(u),e_2(w)]e_2(w),
\end{align*}
and 
\begin{align*}
\La(u,v,w)
:=&\, (u-w)(v-w) \Big(e_1(u) \beta_2(v,w)  
- e_1(v) \beta_2(v,w) \\
&\qquad\qquad\qquad\quad + \big(\beta_1(v,w)-\beta_3(v,w) \big)
- \big(\beta_1(u,w)-\beta_3(u,w) \big) \Big).
\end{align*}
%

Plugging \eqref{e1e2u} into $\Xi(u,v,w)$, one finds that $\Xi(u,v,w)$ is symmetric in $v$ and $w$; this is exactly the same as in the proof of \cite[Lemma 5.7]{BK05}. Therefore, we have
\begin{align*}
&(u-v)(u-w)(v-w)\big(\big[[e_1(u),e_2(v)],e_2(w)\big]+\{v\leftrightarrow w\}\big)\\
=&\, \Xi(u,v,w)+\La(u,v,w)-\Xi(u,w,v)-\La(u,w,v)
\\
=&\, \La(u,v,w)-\La(u,w,v),
\end{align*} 
proving \eqref{serre}.
\end{proof}

Recall from \eqref{bdef}--\eqref{hdef} the series
$b_i(u)=\sum_{r\in \bN} b_{i,r}u^{-r-1},\, h_{i}(u)=1+\sum_{r\in\bN}h_{i,r}u^{-r-1},$ for $i=1,2.$ Recall that 
$h_i(u)=h_i(-u)$,
i.e., $h_{i,2r}=0$, for all $r\in \bN$. 
Denote by $\scriptsize \begin{bmatrix} c_{11} & c_{12}\\ c_{21} & c_{22} \end{bmatrix} =\begin{bmatrix} 2 & -1\\ -1 & 2\end{bmatrix}$ the Cartan matrix of type $A_2$. 

\begin{thm}\label{so3rel}
The algebra $\SX_3$ is generated by the coefficients of $b_i(u)$ and $h_i(u)$, $i=1,2$. Moreover, the following relations hold in $\SX_3$, for $i,j\in \{1,2\}$, $k_1, k_2,r, s\in\bN$:
\begin{align}
[h_{i,r},h_{j,s}] &=0,  \qquad h_{i,2r}=0,\label{o3hh}\\
[h_{i,r+1},b_{j,s}]-[h_{i,r-1},b_{j,s+2}] &=c_{ij}\{h_{i,r-1},b_{j,s+1}\}+\frac{1}{4}c_{ij}^2[h_{i,r-1},b_{j,s}],\label{o3hb}\\
[b_{i,r+1},b_{j,s}]-[b_{i,r},b_{j,s+1}] &=\frac{c_{ij}}{2}\{b_{i,r},b_{j,s}\} -2\delta_{ij}(-1)^rh_{i,r+s+1},\label{o3bb}\\
\mathrm{Sym}_{k_1,k_2}\big[b_{i,k_1},[b_{i,k_2},b_{j,r}] \big]& \notag \\
=
(-1)^{k_1}\sum_{p\gge 0}2^{-2p} & \big([h_{i,k_1+k_2-2p-1},b_{j,r+1}]-\{h_{i,k_1+k_2-2p-1},b_{j,r}\}\big),
\qquad \text{ if } i\neq j.\label{o3serre}
\end{align}
(It is understood that $h_{i,-1}=1$ above.)
\end{thm}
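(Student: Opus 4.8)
\textbf{Proof proposal for Theorem~\ref{so3rel}.}
The plan is to derive the four families of relations from the generating-series identities already in hand for $N=2$ and $N=3$, distinguishing the diagonal case $i=j$ (which reduces to rank~$1$ via the embeddings $\vartheta_0,\vartheta_1$) from the off-diagonal case $i\neq j$ (which uses the genuine rank~$2$ computations of \S4.2). First I would record the generating statement: by Lemma~\ref{lem:special-in-Gauss} and Definition~\ref{sxdef}, $\SX_3$ is generated by the coefficients of $e_i(u),f_i(u),h_i(u)$, $i=1,2$; since $e_i(u)=b_i(-u-\tfrac i2+\tfrac12)$ up to the shift in \eqref{bdef} (more precisely $f_i(u)=b_i(u+\tfrac i2)$ and $e_i(u)=f_i(-u-i)$ by Lemma~\ref{efij}), the coefficients of $b_i(u)$ and $h_i(u)$ already generate $\SX_3$, proving the first assertion. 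For \eqref{o3hh} I would simply invoke \eqref{hh} of Proposition~\ref{sx2p}, applied to the two rank~$1$ subalgebras $\vartheta_0(\SX_2)$ and $\vartheta_1(\SX_2)$ together with $[h_1(u),h_2(v)]=0$, which follows from Corollary~\ref{cor:commu-d-e-f} since $h_i$ is built from $d_i,d_{i+1}$ and $h_1$ involves only $d_1,d_2$ while $h_2$ involves only $d_2,d_3$ — here one needs that the $d_k(u)$ mutually commute, which is Corollary~\ref{cor:commu-d-e-f}.

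Next, for the diagonal cases of \eqref{o3hb} and \eqref{o3bb} (i.e.\ $i=j$, where $c_{ii}=2$, $\tfrac14 c_{ii}^2=1$, $\tfrac{c_{ii}}2=1$): these are exactly \eqref{hb} and \eqref{bb} of Proposition~\ref{sx2p} transported through $\vartheta_{i-1}$ using Corollary~\ref{theta2}, which says $\vartheta_{i-1}$ sends $b_1(u)\mapsto b_i(u)$ and $h_1(u)\mapsto h_i(u)$ (after the shift $b_i(u)=f_i(u-\tfrac i2+\cdots)$ is accounted for — one must check the half-integer shift in \eqref{bdef}--\eqref{hdef} is compatible with the shift $\tfrac{M}2$ in $\vartheta_M$, which it is by construction of \eqref{bdef}). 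For the off-diagonal $i\neq j$ cases of \eqref{o3hb} (then $c_{12}=c_{21}=-1$, $\tfrac14 c_{ij}^2=\tfrac14$) and \eqref{o3bb} (then $\tfrac{c_{ij}}2=-\tfrac12$, $\delta_{ij}=0$), I would first convert \eqref{o3bb} into the $b$-variable generating-series form. The component relation \eqref{o3bb} with $i\neq j$ should follow from \eqref{e1e2u}: rewriting $(u-v)[e_1(u),e_2(v)]=e_1(u)e_2(v)-e_1(v)e_2(v)-e_{13}(u)+e_{13}(v)$, symmetrizing appropriately and using $e_{13}(u)=[e_1(u),e_2^{(1)}]$ to express things in terms of $e_1,e_2$ only, then passing to $b_i$ via $b_i(u)=e_i(-u-\tfrac i2+\tfrac12)$; the term $\frac{c_{ij}}2\{b_{i,r},b_{j,s}\}$ comes from the $e_1e_2$ quadratic pieces and there is no $h$-term since $\delta_{ij}=0$. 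For the off-diagonal \eqref{o3hb}, I would start from \eqref{e13f2} and \eqref{e1td2} — more directly, from a cross-rank analogue of \eqref{hbu}, namely a generating identity for $[h_1(u),b_2(v)]$ (equivalently $[\tl d_1 d_2, e_2]$ and $[\tl d_1 d_2, f_2]$), which one extracts from \eqref{sts} with a mixed choice of indices such as $i=1,j=2$ or from Lemma~\ref{efij} combined with Corollary~\ref{cor:commu-d-e-f}; the coefficient $\tfrac14$ on the right is the telltale sign that the cross term only contributes at ``order $2$'' in the resolvent expansion, consistent with $c_{ij}^2=1$.

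Finally, the Serre relation \eqref{o3serre}: this is the heart of the theorem and I expect it to be the main obstacle. The strategy is to start from Lemma~\ref{serrelem}, which already packages $\big[[e_1(u),e_2(v)],e_2(w)\big]+\{v\leftrightarrow w\}$ in terms of the correction series $\beta_1,\beta_2,\beta_3$, and then to expand $\frac{1}{u-v}$ and the various $\frac{1}{u+v+1}$, $\frac{1}{u\mp v\pm\cdots}$ as geometric series, collect the coefficient of $u^{-k_1-1}v^{-k_2-1}w^{-r-1}$ on both sides, symmetrize in $(k_1,k_2)$, and then translate from the $e$-variables to the $b$-variables. Several simplifications must be arranged: (i) the $\beta_2$-contribution, which carries $\tl d_2 d_3=h_1(\cdot)$, must reorganize into the $[h_{i},b_j]$ and $\{h_i,b_j\}$ terms on the right of \eqref{o3serre}; (ii) the $\beta_1$ and $\beta_3$ contributions must largely cancel against each other after symmetrization — indeed $\beta_3(u,w)-\beta_1(u,w)$ has a factor structure that should telescope, leaving only $h_1$-times-$b_2$ pieces — and the residual $u$-independent parts (evaluated at $u=v+1$ as in Lemma~\ref{e1e2e2lem}) must combine into the $\sum_{p\gge0}2^{-2p}$ series, whose appearance traces back to expanding $\frac{1}{2v+2}=\frac1{2v}\cdot\frac1{1+1/v}$ type factors and the odd-series nature $h_{i,2r}=0$; (iii) the sign $(-1)^{k_1}$ arises from the identity $e_1(u)=f_1(-u-1)$ (Lemma~\ref{efij}) when one of the $b$'s on the left is rewritten, exactly as the $(-1)^r$ appears in \eqref{bb}. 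The bookkeeping here is delicate: one is extracting a triple Taylor coefficient from a sum of rational functions with denominators $u-v$, $u-w$, $v-w$, $u+v+1$, $v+w+1$, $u-v-1$, etc., and must verify that all poles at $u=v$, $u=w$ cancel so that the symmetrized left side is genuinely a polynomial identity in the resolvent variables; I would follow the template of \cite[Lemma~5.7]{BK05} for the $\beta$-free part $\Xi$ (already done in Lemma~\ref{serrelem}) and concentrate all new work on $\Lambda(u,v,w)-\Lambda(u,w,v)$, showing its triple coefficient equals the right-hand side of \eqref{o3serre}. A sanity check at low order ($k_1=k_2=0$, $r=0$) against the classical Serre relation in $\gr\SX_3 \cong \mathrm U(\fksl_3[z]^\theta)$ will confirm the normalization.
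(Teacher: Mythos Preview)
Your treatment of \eqref{o3hh}, \eqref{o3bb}, and the diagonal cases of \eqref{o3hb} matches the paper's almost exactly. For the off-diagonal \eqref{o3hb} you are vaguer than necessary: the paper does not go back to \eqref{sts} but instead observes that $h_2(u)=\tl d_2(u-1)d_3(u-1)$ with $[d_3(u),b_1(v)]=0$ by Corollary~\ref{cor:commu-d-e-f}, so $[h_2(u),b_1(v)]$ is governed entirely by the \emph{rank~$1$} identity \eqref{eq:o2-7} for $[d_2(u),f_1(v)]$; a short expansion then gives \eqref{o3hb} for $(i,j)=(2,1)$, and the automorphism $\zeta_3$ of Lemma~\ref{zetamap} (which swaps $b_1\leftrightarrow -b_2$, $h_1\leftrightarrow h_2$) yields $(i,j)=(1,2)$. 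Your route would eventually work but misses this shortcut.

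The substantive divergence is in the Serre relation \eqref{o3serre}. You propose to extract the general triple coefficient $u^{-k_1-1}v^{-k_2-1}w^{-r-1}$ directly from Lemma~\ref{serrelem}. The paper deliberately avoids this: the denominators $u-v$, $u-w$, $u\pm v\pm 1$, $u-w-1$, $2v+2$, $u+v+2$ make a general coefficient extraction intractable (your hoped-for ``$\beta_1,\beta_3$ largely cancel'' is not borne out in any simple way). Instead the paper follows a Levendorskii-type bootstrap (\cite{Le93}): writing $\mathbb S_{ij}(k_1,k_2;r)$ and $\mathbb F_{ij}(k_1,k_2;r)$ for the two sides, it (i) proves only the base case $\mathbb S_{ij}(k,0;0)=\mathbb F_{ij}(k,0;0)$ by a single, carefully organised coefficient extraction from \eqref{serre} (after applying $\eta$, shifting to $b$-variables, and expanding in the region $v\gg w\gg u$ so that only a handful of terms survive at $u^{-1}v^{-1}w^{-k-1}$), then (ii) uses the already-established relation $[h_{i,1},b_{j,s}]=2c_{ij}b_{j,s+1}$ as a ``raising operator'': applying $\mathrm{ad}\,h_{i,1}$ and $\mathrm{ad}\,h_{j,1}$ to $\mathbb S_{ij}=\mathbb F_{ij}$ yields a $2\times 2$ linear system whose solution propagates the identity first in $r$, then in $k_1+k_2$. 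This algebraic induction replaces the entire analytic bookkeeping you anticipate and is the missing idea in your plan.
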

It will be proved later in greater generality (see Theorem \ref{mainthm2}) that the relations \eqref{o3hh}--\eqref{o3serre} are defining relations for $\SX_3$. 

\begin{proof}
The relations for $i=j$ are clear by Proposition \ref{sx2p} and Corollary \ref{theta2}. For $i\ne j$, the relation \eqref{o3bb}  follows from \eqref{e1e2u} while \eqref{o3hh} follows by Corollary \ref{cor:commu-d-e-f}. It remains to verify \eqref{o3hb} for $i\neq j$ and the Serre relation \eqref{o3serre}. 

Let us prove first \eqref{o3hb} for $i=2$ and $j=1$. It follows from \eqref{eq:o2-7} and $[d_3(u),b_1(v)]=0$ that
\[
[h_2(u),b_1(v)]=\frac{1}{u-v-\frac12} h_2(u)\big(b_1(u-\tfrac12)-b_1(v)\big)+\frac{1}{u+v-\frac12} \big(b_1(v)-b_1(-u+\tfrac12)\big)h_2(u).
\]
Clearing the denominators and simplifying further, we have
\[
(u^2-v^2)[h_2(u),b_1(v)]=-v\big(h_2(u)b_1(v)+b_1(v)h_2(u)\big)+\frac{1}{4}[h_2(u),b_1(v)]+\cdots,
\]
where $\cdots$ stands for terms that will not contribute to the coefficients of $u^{-r-1}v^{-s-1}$ for $r,s\in\bN$. The relation \eqref{o3hb} for $i=2$ and $j=1$ follows. 

By Lemma \ref{zetamap}, the automorphism $\zeta_3:\X_3\to \X_3$ maps
$b_i(u) \mapsto -b_{3-i}(u)$ and $h_i(u) \mapsto h_{3-i}(u).$
Applying $\zeta_3$ to \eqref{o3hb} for $i=2$ and $j=1$, we obtain \eqref{o3hb} for $i=1$ and $j=2$.

The proof of the Serre relation \eqref{o3serre} is long and will be given in  \S\ref{subsec:Serre} below.
\end{proof}

\begin{lem}\label{serresimple}
For $i\neq j \in \{1,2\}$, $m\gge 0$, and $r\gge 1$, we have
\[
[h_{i,m+1},b_{j,r-1}]=\sum_{p\gge 0}2^{-2p}\big([h_{i,m-2p-1},b_{j,r+1}]-\{h_{i,m-2p-1},b_{j,r}\}\big).
\]
\end{lem}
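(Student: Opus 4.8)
The plan is to obtain the identity purely by iterating the relation \eqref{o3hb}, specialized to our situation $i\neq j$, in which $c_{ij}=-1$ and $c_{ij}^2=1$. Rewriting \eqref{o3hb} with the two free indices taken to be $m$ and $r-1$ yields the recursion
\[
[h_{i,m+1},b_{j,r-1}]=[h_{i,m-1},b_{j,r+1}]-\{h_{i,m-1},b_{j,r}\}+\tfrac14[h_{i,m-1},b_{j,r-1}],
\]
valid for all $m\gge 0$ and $r\gge 1$ (for $m=0$ this uses the convention $h_{i,-1}=1$). Before running the argument I would record the conventions that make the right-hand side of the asserted formula meaningful: $h_{i,-1}=1$, and $h_{i,n}=0$ for $n\lle -2$ since $h_i(u)\in 1+u^{-1}\bC[[u^{-1}]]$; together with $h_{i,2p}=0$ for $p\gge 0$ from \eqref{o3hh}, these ensure that only finitely many of the summands on the right-hand side are nonzero, so that the series is genuinely a finite sum.

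Next I would induct on $m\gge 0$, with the statement understood for all $r\gge 1$ at once. For the base case $m=0$ the recursion degenerates: the two terms containing $h_{i,-1}=1$ collapse via $[h_{i,-1},\cdot]=0$ and $\{h_{i,-1},b_{j,r}\}=2b_{j,r}$ to give $[h_{i,1},b_{j,r-1}]=-2b_{j,r}$, which is exactly the $p=0$ term of the right-hand side, all terms with $p\gge 1$ vanishing since then $-2p-1\lle -3$. For $m=1$ both sides vanish: the left because $h_{i,2}=0$, and the right because each $h_{i,-2p}$ occurring there ($p\gge 0$) is zero. For the inductive step with $m\gge 2$, I would substitute the inductive hypothesis for $m-2$ into the term $[h_{i,m-1},b_{j,r-1}]$ of the recursion, multiply the resulting series by $\tfrac14=2^{-2}$, shift the summation index by one, and recognize the leftover term $[h_{i,m-1},b_{j,r+1}]-\{h_{i,m-1},b_{j,r}\}$ as the $p=0$ contribution; the two pieces then reassemble into the claimed series, completing the induction.

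I do not anticipate any real obstacle here: the content is just a telescoping of the rank-one type relation \eqref{o3hb}. The only care needed is bookkeeping — correctly tracking the boundary term $h_{i,-1}=1$ (which contributes through the anticommutator but not through the commutator) in the case $m$ even, and checking that the sum truncates. If one prefers, the parity split can be avoided entirely: for $m$ odd the left-hand side $[h_{i,m+1},b_{j,r-1}]$ vanishes because $m+1$ is even, while on the right every index $m-2p-1$ is even, so each $h_{i,m-2p-1}$ vanishes; hence the genuine content lies only in the case $m$ even.
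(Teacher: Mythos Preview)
Your proof is correct and follows essentially the same approach as the paper's. Both arguments rest on the single observation that \eqref{o3hb} with $c_{ij}=-1$ rewrites each summand $[h_{i,m-2p-1},b_{j,r+1}]-\{h_{i,m-2p-1},b_{j,r}\}$ as $[h_{i,m-2p+1},b_{j,r-1}]-\tfrac14[h_{i,m-2p-1},b_{j,r-1}]$, after which the weighted sum telescopes; the paper carries out the telescoping directly in one line, while you package the same cancellation as an induction on $m$.
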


\begin{proof}
By \eqref{o3hb}, we have
\begin{align*}
[h_{i,m-2p-1},b_{j,r+1}]- \{h_{i,m-2p-1},b_{j,r}\}
= [h_{i,m-2p+1},b_{j,r-1}]-\tfrac{1}{4}[h_{i,m-2p-1},b_{j,r-1}].
\end{align*}
The lemma follows from this and the fact that $h_{i,-1}=1$.
\end{proof}

Lemma~\ref{serresimple} allows to simplify the Serre relation \eqref{o3serre}, for $r\gge 1$ and $i\neq j$ to be
\begin{align}\label{eq:simpleSerre}
    \mathrm{Sym}_{k_1,k_2}\big[b_{i,k_1},[b_{i,k_2},b_{j,r}] \big] &=(-1)^{k_1}[h_{i,k_1+k_2+1},b_{j,r-1}], \qquad \text{ for } r\gge 1,\, i\neq j.
\end{align}

\begin{rem}
One can reformulate the relations \eqref{o3hh}--\eqref{o3bb} in generating series as follows: 
\begin{align*}
[h_i(u),h_j(v)]&=0,
\\
 (u^2-v^2)[h_i(u),b_j(v)]&=c_{ij}v\{h_i(u),b_j(v)\}+\frac{1}{4}c_{ij}^2[h_i(u),b_j(v)]\\&\quad -[h_i(u),b_{j,1}]-c_{ij}\{h_i(u),b_{j,0}\}-v[h_i(u),b_{j,0}], 
\\
(u-v)[b_i(u),b_i(v)]&=-\frac{c_{ii}}{2}\big(b_i(u)-b_i(v)\big)^2- \frac{u-v}{u+v}\big(h_i(u)-h_i(v)\big),
\\
(u-v)[b_i(u),b_j(v)]&=\frac{c_{ij}}{2}\{b_i(u),b_j(v)\}+\big([b_{i,0},b_j(v)]-[b_i(u),b_{j,0}]\big).
\end{align*}
The relation \eqref{eq:simpleSerre} leads to the following formulation of the Serre relation in generating series:
\begin{align*}
 &\mathrm{Sym}_{u,v}\big[b_{i}(u),[b_{i}(v),b_{j}(w)] \big]
 \\
 =&-\frac{w^{-1}}{1+uv^{-1}}[h_{i}(u),b_{j}(w)]
+ w^{-1}\mathrm{Sym}_{u,v}\big[b_{i}(u),[b_{i}(v),b_{j,0}] \big].
\end{align*}
\end{rem}

\subsection{Proof of the Serre relations } \label{subsec:Serre}

Now we prove the Serre relations \eqref{o3serre}. Our strategy bears similarities with \cite{Le93} while the details here are more involved. Denote  the left and right-hand sides  of \eqref{o3serre} (cf. Lemma \ref{serresimple}) by
\begin{align}
\mathbb S_{ij}(k_1,k_2;r) &=\mathrm{Sym}_{k_1,k_2}\big[b_{i,k_1},[b_{i,k_2},b_{j,r}]\big],
\\
\mathbb F_{ij}(k_1,k_2;r) &=(-1)^{k_1}\sum_{p\gge 0}2^{-2p}\big([h_{i,k_1+k_2-2p-1},b_{j,r+1}]-\{h_{i,k_1+k_2-2p-1},b_{j,r}\}\big).
\end{align} 

\medskip 

{\bf Claim 1}. We have $\mathbb S_{ij}(k,0;0)=\mathbb F_{ij}(k,0;0)$.

\begin{proof}
 Let us compute $\mathbb S_{21}(k,0;0)$ first when $k$ is even. We do the following 3 transformations.
\begin{itemize}
    \item Apply the anti-automorphism $\eta$ to \eqref{serre} and obtain such a relation between generating series  $f_i(z), d_j(z)$.
    \item To get the Serre relations in terms of $h_i(z),b_i(z)$, we shift the spectral parameters properly using \eqref{bdef} and \eqref{hdef}, namely $u\to u-\frac{1}{2}$, $v\to v-1$, and $w\to w-1$.
    \item Finally, we expand the series in the region $\{(u,v,w):v\gg w\gg u\}$ and compare the coefficients of $u^{-1}v^{-1}w^{-k-1}$.
\end{itemize}
The expression $\mathbb S_{21}(k,0;0)$ is equal to the coefficients of $u^{-1}v^{-1}w^{-k-1}$ in the left-hand side of \eqref{serre} and hence is the same as the coefficients of $u^{-1}v^{-1}w^{-k-1}$ in this expansion from the right-hand side of \eqref{serre}. Due to the suitable choice of the region, it turns out the nontrivial contribution of the term $u^{-1}v^{-1}w^{-k-1}$ from the right-hand side only comes from the following terms (at this stage we only need terms containing $b_1(u)$ and $v^{-j}$ for $j=0,1$),
\begin{align*}
\frac{1}{(u-v+\frac{1}{2})(u+w-\frac{1}{2})}b_1(u)h_2(w)+\frac{1}{(u-v+\frac{1}{2})(u-w-\frac{1}{2})(u+w-\frac{1}{2})}b_1(u)h_2(w)&\\
+\frac{1}{(u-w+\frac{1}{2})(v+w)}(h_2(w)-h_2(v))b_1(u)+\frac{1}{(u-w+\frac{1}{2})(u+v-\frac{1}{2})}b_1(u)h_2(v)&.
\end{align*}
Expanding it at $v=\infty$ and taking the coefficients of $v^{-1}$, we obtain
\be
-\frac{1}{w+u-\frac{1}{2}}b_1(u)h_2(w)+\frac{1}{w^2-(u-\frac{1}{2})^2}b_1(u)h_2(w)-\frac{1}{w-u-\frac{1}{2}}h_2(w)b_1(u) =: \Omega(u,w).
\ee 
Since $h_2(w)=h_2(-w)$ and $k$ is even, the coefficient of $u^{-1}w^{-k-1}$ in
$
\frac{1}{w^2-(u-\frac{1}{2})^2}b_1(u)h_2(w)$
is zero. 

We shall write 
\[
A(u,w)\simeq B(u,w)
\]
if $A(u,w)$ and $B(u,w)$ have the same coefficients of $u^{-k}w^{-l}$, for $k,l>0$. In such a notation we have
\[
[h_2(w),b_1(u)]\simeq -\frac{1}{w-u-\frac{1}{2}}h_2(w)b_1(u)+\frac{1}{w+u-\frac{1}{2}}b_1(u)h_2(w).
\]
This implies further that
\beq\label{o3pf12}
\frac{w-u+\frac{1}{2}}{w-u-\frac{1}{2}}h_2(w)b_1(u)\simeq \frac{w+u+\frac{1}{2}}{w+u-\frac{1}{2}}b_1(u)h_2(w).
\eeq
From this one obtains that
\be
-\frac{1}{w+u-\frac{1}{2}}b_1(u)h_2(w)-\frac{1}{w-u-\frac{1}{2}}h_2(w)b_1(u)\simeq \frac{u}{w-\frac12}[h_2(w),b_1(u)]-\frac{1}{w-\frac12}\{h_2(w),b_1(u)\}.
\ee
The coefficient of $u^{-1}w^{-k}$ in the right-hand side is equal to
\[
\sum_{p\gge 0}2^{-2p}\Big([h_{2,k-2p-1},b_{1,1}]-\{h_{2,k-2p-1},b_{1,0}\}\Big).
\]
Thus we have
\[
\mathbb S_{21}(k,0;0)=\sum_{p\gge 0}2^{-2p}\big([h_{2,k-2p-1},b_{1,1}]-\{h_{2,k-2p-1},b_{1,0}\}\big).
\]

By a similar computation using \eqref{o3pf12} we also have that $\Omega(u,w)\simeq \Omega(u,-w)$. Thus, if $k$ is odd, then $\mathbb S_{21}(k,0;0)=0$.

Again, applying $\zeta_3$ to the above Serre relation, we also obtain another Serre relation
\[
\mathbb S_{12}(k,0;0)=\sum_{p\gge 0}2^{-2p}\big([h_{1,k-2p-1},b_{2,1}]-\{h_{1,k-2p-1},b_{2,0}\}\big),
\]
whenever $k$ is even or odd since $h_{i,2r}=0$.
\end{proof}

{\bf Claim 2}. We have $\mathbb S_{ij}(k,0;r)=\mathbb F_{ij}(k,0;r)$. Moreover, $\mathbb S_{ij}(k+1,0;r)+\mathbb S_{ij}(k,1;r)=0$.

\begin{proof}
We prove $\mathbb S_{ij}(k,0;r)=\mathbb F_{ij}(k,0;r)$ by induction on $r$. The base case is shown in Claim 1. Suppose $\mathbb S_{ij}(k,0;r)=\mathbb F_{ij}(k,0;r)$. By \eqref{o3hb} with $r=0$, we have 
\[
[h_{i,1},b_{j,s}]=2c_{ij}b_{j,s+1}.
\]

Applying $h_{i,1}$ and $h_{j,1}$ to $\mathbb S_{ij}(k,0;r)=\mathbb F_{ij}(k,0;r)$, we obtain
\begin{align*}
&4\bS_{ij}(k+1,0;r)+4\bS_{ij}(k,1;r)-2\bS_{ij}(k,0;r+1)=-2\bF_{ij}(k,0;r+1),\\
-&2\bS_{ij}(k+1,0;r)-2\bS_{ij}(k,1;r)+4\bS_{ij}(k,0;r+1)=4\bF_{ij}(k,0;r+1),
\end{align*}
respectively. Solving the system, we obtain that
\[
\bS_{ij}(k+1,0;r)+\bS_{ij}(k,1;r)=0,\qquad 
\bS_{ij}(k,0;r+1)=\bF_{ij}(k,0;r+1).
\]
Hence by induction, we have $\mathbb S_{ij}(k,0;r)=\mathbb F_{ij}(k,0;r)$ for all $r\in\bN$. In particular, from the proof, we also have $\bS_{ij}(k+1,0;r)+\bS_{ij}(k,1;r)=0$ for all $r\in\bN$.
\end{proof}

{\bf Claim 3}. We have $\mathbb S_{ij}(k_1+1,k_2;r)+\mathbb S_{ij}(k_1,k_2+1;r)=0$ and $\mathbb S_{ij}(k_1,k_2;r)=\mathbb F_{ij}(k_1,k_2;r)$. 
\begin{proof}
We prove it by induction on $k_1+k_2$. When $k_1+k_2=0$, this is clear as by definition $\bS_{ij}(k_1,k_2;r)$ is symmetric in $k_1,k_2$ and also $\bS_{ij}(1,0;r)=0$ by Claim 2.

Suppose $\mathbb S_{ij}(k_1+1,k_2;r)+\mathbb S_{ij}(k_1,k_2+1;r)=0$. Applying $h_{i,1}$ and $h_{j,1}$ to it as in the proof of Claim 2, we conclude that
\[
\bS_{ij}(k_1+2,k_2;r)+2\bS_{ij}(k_1+1,k_2+1;r)+\bS_{ij}(k_1,k_2+2;r)=0.
\]
Thus 
\begin{align*}
\bS_{ij}(k_1+1,k_2+1;r)+&\,\bS_{ij}(k_1,k_2+2;r)\\=&\,(-1)^{k_2+1}\big(\bS_{ij}(k_1+k_2+2,0;r)+\bS_{ij}(k_1+k_2+1,1;r)\big)=0
\end{align*}
by Claim 2. Therefore, $\mathbb S_{ij}(k_1+1,k_2;r)+\mathbb S_{ij}(k_1,k_2+1;r)=0$ for all $k_1,k_2,r\in\bN$.

It is clear that $\mathbb F_{ij}(k_1+1,k_2;r)+\mathbb F_{ij}(k_1,k_2+1;r)=0$ for all $k_1,k_2,r\in\bN$. Hence $\mathbb S_{ij}(k_1,k_2;r)=\mathbb F_{ij}(k_1,k_2;r)$ follows immediately from this property and Claim 2.
\end{proof}

\section{Drinfeld type presentations of twisted Yangians}
\label{sec:Drinfeld}

In this section we formulate and establish the Drinfeld type current presentations of (special) twisted Yangians $\Y_N$ and $\SY_N$. 

\subsection{Main results}

\begin{thm}  [Drinfeld presentation of the twisted Yangian $\Y_N$]
\label{mainthm}
Let $A=(c_{ij})_{1\lle i,j\lle N-1}$ be the Cartan matrix of type $A_{N-1}$, and set $h_{i,-1}=1$, $c_{0j}=-\delta_{j,1}$. Then the twisted Yangian $\Y_N$ is generated by $h_{i,r}$, $ b_{j,r}$, for $0\lle i<N$, $1\lle j<N$ and $r\in\bN$ subject only to the following relations, for $r,s \in \bN$:
\begin{align}
[ h_{i,r}, h_{j,s}]&=0, \qquad { h_{i,2r}=0, } \label{hhN}\\	
[ h_{i,r+1}, b_{j,s}]-[ h_{i,r-1}, b_{j,s+2}]&=c_{ij}\{ h_{i,r-1}, b_{j,s+1}\}+\frac{1}{4}c_{ij}^2[ h_{i,r-1}, b_{j,s}],\label{hbN}\\
[ b_{i,r+1}, b_{j,s}]-[ b_{i,r}, b_{j,s+1}]&=\frac{c_{ij}}{2}\{ b_{i,r}, b_{j,s}\}-2\delta_{ij}(-1)^r h_{i,r+s+1},\label{bbN}\\
[b_{i,r},b_{j,s}]&=0,\qquad  \text{ for }|i-j|>1,\label{bbN2}\\
\mathrm{Sym}_{k_1,k_2}\big[b_{i,k_1},[b_{i,k_2},b_{j,r}] \big] &=(-1)^{k_1}[h_{i,k_1+k_2+1},b_{j,r-1}],
\quad \text{ for } c_{ij}=-1.\label{serreN}
\end{align}
\end{thm}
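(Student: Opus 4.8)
The plan is to follow the standard two-step template for establishing a presentation: first produce a surjection from the abstractly presented algebra onto $\Y_N$, then prove it is injective by a PBW-type comparison. Let $\widehat{\Y}_N$ be the associative algebra on generators $h_{i,r}$, $b_{j,r}$ ($0\lle i<N$, $1\lle j<N$, $r\in\bN$) with relations \eqref{hhN}--\eqref{serreN}. I would first verify that the Gauss generators $h_{i,r},b_{j,r}$ of $\Y_N$ from \eqref{bdef}--\eqref{hdef} satisfy \eqref{hhN}--\eqref{serreN}, giving a homomorphism $\Phi\colon\widehat{\Y}_N\to\Y_N$. The relations split into three groups. (i) Those confined to indices within distance $\lle 1$ — \eqref{hbN} and \eqref{bbN} for $|i-j|\lle 1$, and \eqref{serreN} — are obtained by transporting the rank $1$ and rank $2$ identities of Proposition~\ref{sx2p} and Theorem~\ref{so3rel} through the embeddings $\vartheta_M$ and $\jmath$ via Corollaries~\ref{theta2} and~\ref{theta} and Lemma~\ref{comsubalg}; the case $i=0,j=1$ of \eqref{hbN} (involving $h_0=d_1$) comes directly from the rank $1$ identities of Lemma~\ref{lem:o2-relations} after the shifts in \eqref{bdef}--\eqref{hdef}. (ii) The long-range relations — \eqref{bbN2}, \eqref{hhN}, and the cases $|i-j|\gge 2$ of \eqref{hbN} — follow from the commuting-subalgebra statements: Lemma~\ref{stslem}, Corollary~\ref{cor:commu-d-e-f}, and Lemma~\ref{comsubalg} applied to $\jmath(\X_{j-1})$ and $\vartheta_{j-1}(\X_{N-j+1})$, which gives $[d_i(u),d_j(v)]=0$ for all $i,j$ and $[h_i(u),b_j(v)]=0$ for $|i-j|\gge 2$. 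Finally $\Phi$ is surjective, since $d_i(u)$ is recovered from $h_0(u),\dots,h_{i-1}(u)$ by \eqref{hdef}, the $e_j(u),f_j(u)$ from $b_j(u)$, all $e_{ij}(u),f_{ji}(u)$ from Lemma~\ref{lem:ei-generate-eij}, and every $s_{ij}(u)$ from \eqref{eq:sij-Gauss}.

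For injectivity it suffices to exhibit a spanning set of $\widehat{\Y}_N$ that $\Phi$ carries onto the PBW basis of $\Y_N$ in Proposition~\ref{PBWgauss}: a surjection taking a spanning set to a linearly independent set is an isomorphism. In $\widehat{\Y}_N$ define higher root vectors by $\widehat b_{i+1,i,r}:=b_{i,r}$ and inductively $\widehat b_{j+1,i,r}:=[b_{j,0},\widehat b_{j,i,r}]$, mirroring \eqref{eq:ei-generate-eij}; Lemma~\ref{lem:ei-generate-eij} then gives $\Phi(\widehat b_{ji,r})=b_{ji,r}$. The candidate spanning set is the set of ordered monomials in the $h_{k,2s+1}$ ($0\lle k<N$) and the $\widehat b_{ji,r}$ ($1\lle i<j\lle N$) with all $h$'s to the left, ordered as in Proposition~\ref{PBWgauss}; note $h_{i,2r}=0$ holds in $\widehat{\Y}_N$ by \eqref{hhN}, so only the $h_{k,2s+1}$ occur.

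I would then show these ordered monomials span $\widehat{\Y}_N$ by the straightening argument of Brundan--Kleshchev \cite[\S5--6]{BK05}, running a primary induction on the filtration degree ($\deg h_{i,r}=\deg b_{j,r}=r$, under which all of \eqref{hhN}--\eqref{serreN} are filtered) and a secondary induction on the disorder of a monomial at fixed degree. The reduction rules are: the $h$'s commute among themselves by \eqref{hhN}; an $h_{i,r}$ is moved past a $b_{j,s}$ using \eqref{hbN}, which (exactly as in the derivation of Lemma~\ref{serresimple}, and using $h_{i,-1}=1$, $h_{i,2r}=0$) rewrites $[h_{i,r},b_{j,s}]$ as a combination of the $b_{j,\bullet}$ and of strictly lower-degree products $b_{j,\bullet}h_{i,\bullet}$; two $\widehat b$'s are reordered as in \cite{BK05} — non-adjacent ones commute by \eqref{bbN2}, adjacent ones by \eqref{bbN}, and nested triple brackets collapse via the Serre relation \eqref{serreN} in the form \eqref{eq:simpleSerre} — the sole new feature being that \eqref{bbN} and \eqref{serreN} produce extra $h_{i,\bullet}$ terms of the same degree, which are then pushed to the left by the previous step. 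Iterating until no reduction applies yields an expression in ordered monomials of the desired shape.

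Applying $\Phi$ to this spanning set returns, by construction and Lemma~\ref{lem:ei-generate-eij}, exactly the PBW basis of Proposition~\ref{PBWgauss}; hence the spanning set is a basis and $\Phi$ is an isomorphism, which is the assertion of the theorem. The main obstacle I expect is the straightening: proving that \eqref{bbN} and \eqref{serreN} genuinely suffice to bring every iterated bracket of the $b_{j,\bullet}$ into normal form while the twisted-Yangian corrections (the $h$-terms absent from the $\rY(\gl_N)$ story) are tracked so that they always have low enough degree to be absorbed by the inductive hypothesis; one must also check that the recursive definition of $\widehat b_{ji,r}$ is path-independent, the analog of well-definedness of quasi-determinant root vectors. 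A lesser subtlety is the half-size Cartan ($h_{i,2r}=0$), which must be invoked correctly whenever the case $i=j$ of \eqref{bbN} contributes an $h_{i,r+s+1}$.
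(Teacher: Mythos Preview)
Your outline is correct and matches the paper's two-step approach: verify the relations in $\Y_N$ via the rank $1$--$2$ computations and the embeddings $\jmath,\vartheta_M$ to obtain the surjection, then prove injectivity by a PBW comparison using the same iterated-bracket elements $\widehat b_{ji,r}$ and the filtration $\deg h_{i,r}=\deg b_{j,r}=r$.

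The one place where the paper is more concrete than your sketch is the spanning argument. Rather than a running ``straightening with disorder'' procedure, the paper passes once and for all to the associated graded algebra $\gr\,\widehat{\Y}_N$ and proves the single closed commutation relation
\[
[\bar \ttb_{ij}^{(r)},\bar\ttb_{kl}^{(s)}]=\delta_{jk}\bar\ttb_{il}^{(r+s)}-\delta_{li}\bar\ttb_{kj}^{(r+s)}-(-1)^r\delta_{ki}\bar\ttb_{jl}^{(r+s)}+(-1)^r\delta_{lj}\bar\ttb_{ki}^{(r+s)}
\]
for all pairs $(i,j),(k,l)$, by a seven-case analysis (the cases $j>k$; $j=k$; $i=k,j>l$; $i>k,j=l$; $i=k,j=l$; $i>k>j>l$; $i>k>l>j$). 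This relation identifies $\gr\,\widehat{\Y}_N$ as a quotient of $\mathrm{U}(\gl_N[z]^\theta)$, from which the spanning by ordered monomials is immediate. Your ``reordering two $\widehat b$'s as in \cite{BK05}'' is exactly this step in disguise, but note that the rules you list (\eqref{bbN2}, \eqref{bbN}, \eqref{serreN}) govern only the \emph{simple} $b_{i,r}$; reordering two composite $\widehat b_{ji,r}$ ultimately requires this full case analysis, including the auxiliary identities $[\bar\ttb_{i+3,i+1}^{(r)},\bar\ttb_{i+2,i}^{(s)}]=0$ and the path-independence $\bar\ttb_{ij}^{(r)}=[\bar\ttb_{i,i-1}^{(0)},\bar\ttb_{i-1,j}^{(r)}]=[\bar\ttb_{i,j+1}^{(r)},\bar\ttb_{j+1,j}^{(0)}]$ that you flagged. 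So your anticipated obstacle is precisely where the work lies, and the cleanest way through it is the graded-algebra formulation above rather than a direct disorder induction.
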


Theorem \ref{mainthm} will be proved in \S\ref{subsec:proofmain} below. Note that the relation \eqref{hbN} with $r=0$ reads simply as
\beq\label{h1bN}
[h_{i,1},b_{j,s}]=2c_{ij}b_{j,s+1}.
\eeq

\begin{rem} \label{rem:Serre-1}
The right-hand side of the Serre relation \eqref{serreN} for $r=0$ is understood by formally validating \eqref{hbN} for $c_{ij}=-1\&s=-1$ or equivalently Lemma \ref{serresimple} for $r=0$. More explicitly, $[h_{i,m},b_{j,-1}]$ is defined inductively on $m$ by declaring that $[h_{i,-1},b_{j,-1}]=0$ and
$$
[h_{i,m+1},b_{j,-1}]=[h_{i,m-1},b_{j,1}]-\{h_{i,m-1},b_{j,0}\}+\frac{1}{4}[h_{i,m-1},b_{j,-1}],
$$
or equivalently,
\[
[h_{i,m+1},b_{j,-1}]=\sum_{p\gge 0}2^{-2p}  \Big([h_{i,m-2p-1},b_{j,r+1}]-\{h_{i,m-2p-1},b_{j,r}\}\Big).
\]
\end{rem}

A variant of Theorem \ref{mainthm} gives us a presentation for $\SY_N$ below. Note that $h_{0,r}$  $(r\in \bN)$ are additional generators in $\Y_N$, which are not present in $\SY_N$. 

\begin{thm}
[Drinfeld presentation of the special twisted Yangian $\SY_N$]
\label{mainthm2}
The special twisted Yangian $\SY_N$ is generated by $h_{i,r}$, $b_{i,r}$, for $1\lle i<N$ and $r\in\bN$ subject only to the same relations \eqref{hhN}--\eqref{serreN}, for $1\lle i,j<N$. Moreover, we have $\SX_N\cong \SY_N$.
\end{thm}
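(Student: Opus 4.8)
The plan is to exhibit the associative algebra $\mathcal{A}$ abstractly presented by the generators $h_{i,r},b_{i,r}$ ($1\lle i<N$, $r\in\bN$) and the relations \eqref{hhN}--\eqref{serreN} (for $1\lle i,j<N$) simultaneously as $\SX_N$ and as $\SY_N$, by sandwiching it between the two and comparing with the PBW basis of $\SY_N$ supplied by Proposition \ref{PBWgauss}. The first task is to verify that the relations \eqref{hhN}--\eqref{serreN} with $1\lle i,j<N$ hold among the Gauss-decomposition elements $h_{i,r},b_{i,r}$ of $\SX_N$: the diagonal cases $i=j$ are the rank one relations \eqref{hh}--\eqref{hb} of Proposition \ref{sx2p}, the cases $|i-j|=1$ are the rank two relations of Theorem \ref{so3rel} (including the Serre relation, in the equivalent form \eqref{serreN}), both transported into $\SX_N$ by the embeddings $\vartheta_M$ and $\jmath$ via Corollaries \ref{theta} and \ref{theta2}, and the cases $|i-j|>1$ follow from Corollary \ref{cor:commu-d-e-f}. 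Since $\SX_N$ is generated by the coefficients of $b_i(u)$ and $h_i(u)$, $1\lle i<N$ (Definition \ref{sxdef} together with \eqref{eq:o2-2}), this produces a surjection $\pi\colon\mathcal{A}\twoheadrightarrow\SX_N$; composing with the canonical epimorphism $\X_N\twoheadrightarrow\Y_N$, which carries $\SX_N$ onto $\SY_N$ by Lemma \ref{lem:special-in-Gauss}, yields a surjection $\Phi\colon\mathcal{A}\twoheadrightarrow\SY_N$ factoring through $\pi$.

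The heart of the proof is a PBW-type spanning statement for $\mathcal{A}$. Following Lemma \ref{lem:ei-generate-eij}, I would introduce higher root vectors inside $\mathcal{A}$ by setting $b_{i+1,i,r}:=b_{i,r}$ and $b_{j+1,i,r}:=[b_{j,0},b_{ji,r}]$ for $i<j$, and prove that $\mathcal{A}$ is spanned over $\bC$ by the ordered monomials in the elements $b_{ji,r}$ ($1\lle i<j\lle N$, $r\in\bN$) and $h_{k,2s+1}$ ($1\lle k<N$, $s\in\bN$). I would establish this by the same reduction procedure that proves Theorem \ref{mainthm}: set $\deg h_{i,r}=\deg b_{i,r}=r$, pass to the induced filtration, and use \eqref{hhN}--\eqref{bbN2} to bring an arbitrary word in the generators into ordered form, the Serre relations \eqref{serreN} being exactly what permits the long $b$-brackets to be rewritten in terms of the $b_{ji,r}$. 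The observation that makes this a genuine shortcut is that, applied to a word in the generators of $\mathcal{A}$, the procedure only ever invokes instances of \eqref{hhN}--\eqref{serreN} with $1\lle i,j<N$ and never creates an element $h_{0,\cdot}$; hence it is literally the restriction to $\mathcal{A}$ of the reduction carried out in the proof of Theorem \ref{mainthm}, and terminates inside $\mathcal{A}$.

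Granting the spanning statement, the rest is formal. By construction $\Phi$ carries the spanning monomials of $\mathcal{A}$ bijectively onto the monomials forming the PBW basis of $\SY_N$ in Proposition \ref{PBWgauss} (its $\SY_N$ case, with $1\lle k<N$); here one uses that $\Phi(b_{ji,r})$ equals the Gauss-decomposition element $b_{ji,r}$ of $\SY_N$, which is precisely \eqref{eq:ei-generate-eij}. Since these images are linearly independent in $\SY_N$ and $\Phi$ is surjective, $\Phi$ is an isomorphism, and the spanning monomials form a basis of $\mathcal{A}$. Since $\Phi$ factors as $\mathcal{A}\twoheadrightarrow\SX_N\twoheadrightarrow\SY_N$ with first arrow $\pi$, the surjection $\pi$ is injective as well; hence $\SX_N\cong\mathcal{A}\cong\SY_N$, which is the assertion.

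The only substantial step is the spanning statement for $\mathcal{A}$, i.e.\ showing that every word in the generators can be reduced to ordered PBW form using nothing beyond the defining relations \eqref{hhN}--\eqref{serreN}, rather than relations that become available only a posteriori once $\mathcal{A}$ is identified with $\SY_N$. This is precisely the combinatorially delicate core of the proof of Theorem \ref{mainthm}, where the bookkeeping around the Serre relations \eqref{serreN} is the most demanding ingredient; everything else in the present statement is a formal consequence of that argument together with the PBW basis of Proposition \ref{PBWgauss} and the low-rank relations of Section \ref{sec:low rank}.
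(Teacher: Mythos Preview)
Your proposal is correct and follows essentially the same route as the paper: construct a surjection from the abstractly presented algebra through $\SX_N$ onto $\SY_N$, then prove injectivity by showing that an ordered spanning set maps onto the PBW basis of Proposition~\ref{PBWgauss}, with the spanning argument borrowed from the proof of Theorem~\ref{mainthm}. One small caveat: the reduction in the proof of Theorem~\ref{mainthm} is organized via the elements $\ttb_{ii}^{(r)}=h_{0,r}+\cdots+h_{i-1,r}$, which do involve $h_{0,r}$, so it is not \emph{literally} the restriction; however, since the diagonal terms only ever enter through differences $\ttb_{ii}^{(r)}-\ttb_{jj}^{(r)}$ (case~(5)), the argument goes through verbatim after redefining $\ttb_{ii}^{(r)}=h_{1,r}+\cdots+h_{i-1,r}$ in $\mathcal{A}$, and your conclusion stands.
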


\begin{proof} 
Denote by $\mathscr S\YDrN$ the algebra generated by the above generators and relations. By the proof of Theorem \ref{mainthm}, it is clear that the elements $h_{i,r}$ and $b_{i,r}$ in $\SX_N$ defined in terms of Gauss decomposition satisfy the relations in Theorem \ref{mainthm} with admissible indices. Thus we have a surjective homomorphism $\mathscr S\YDrN \twoheadrightarrow \SX_N$. Then we have the following chain of algebra homomorphisms,
\[
\mathscr S\YDrN \twoheadrightarrow \SX_N \hookrightarrow \X_N \twoheadrightarrow \Y_N \twoheadrightarrow \SY_N,
\]
see Definitions \ref{tydef}, \ref{sxdef} and Lemma \ref{syquo}.

By Lemma \ref{lem:special-in-Gauss} and Definition \ref{sxdef}, the composition map $\SX_N \hookrightarrow \X_N \twoheadrightarrow \Y_N \twoheadrightarrow \SY_N$ gives us an epimorphism $\SX_N \twoheadrightarrow \SY_N$. In particular, we have the following surjective homomorphisms
\beq\label{pfmain1}
\psi:\mathscr S\YDrN \twoheadrightarrow \SX_N  \twoheadrightarrow \SY_N.
\eeq
Therefore, in order to show $\mathscr S\YDrN \cong \SX_N  \cong \SY_N$, it suffices to show that the homomorphism $\psi:\mathscr S\YDrN\to\SY_N$ in \eqref{pfmain1} is also injective. We achieve that by showing a spanning set of $\mathscr S\YDrN$ under the map $\psi$ is a basis of $\SY_N$.

It is known from Proposition \ref{prop:PBW} that there is a PBW basis of $\SY_N$ consisting of monomials in $h_{i,2r+1},b_{ji,r}$, $1\lle i<j\lle N$, $r\gge 0$, with certain (maybe any) given order. In order to show that $\mathscr S\YDrN \cong \SX_N  \cong \SY_N$, it suffices to show that there is a spanning set of $\mathscr S\YDrN$ that are sent to those monomials in $\SY_N$ under the homomorphism $\psi$. This is basically the same as done in the proof of Theorem \ref{mainthm}. 
\end{proof}

\begin{rem}
The Drinfeld presentation in Theorem \ref{mainthm} of the special twisted Yangians of type AI obtained via Gauss decomposition matches perfectly with the presentation obtained via degeneration of affine $\imath$quantum group in \cite{LWZ24}.
\end{rem}

\subsection{A flat deformation}

There exists a flat deformation $\rY(\gl_N,\hbar)$ of the Yangian $\rY(\gl_N)$, cf. \cite[Rem. 1.4.4\&2.4.5]{Mol07}, and this induces a flat deformation on $\Y_N$ as a subalgebra. Let us make this explicit using the new current generators. 

For each $\hbar\in\bC^\times$, consider the algebra $\Y_{N,\hbar}$ with generators $s_{ij}^{(r)}$ and the relations
\begin{align*}
(u^2-v^2)[s_{ij}(u),s_{kl}(v)]&=\hbar (u+v)\big(s_{kj}(u)s_{il}(v)-s_{kj}(v)s_{il}(u)\big)\\
&-\hbar(u-v)\big(s_{ik}(u)s_{jl}(v)-s_{ki}(v)s_{lj}(u)\big)\\
&\hskip1.02cm +\hbar^2\big(s_{ki}(u)s_{jl}(v)-s_{ki}(v)s_{jl}(u)\big)
\end{align*}
and
\[
s_{ji}(-u)=s_{ij}(u)+\hbar\frac{s_{ij}(u)-s_{ij}(-u)}{u},
\]
where
\[
s_{ij}(u)=\delta_{ij}+\hbar\sum_{r>0}s_{ij}^{(r)}u^{-r}.
\]

Similarly, applying the Gauss decomposition to the matrix $S(u)=(s_{ij}(u))$ satisfying \eqref{eq:sij-Gauss}, we again obtain generating series $e_{ij}(u)$, $f_{ji}(u)$, and $d_i(u)$. Set
\[
e_{ij}(u)=\hbar\sum_{r\gge 1}e_{ij}^{(r)}u^{-r},\quad f_{ji}(u)=\hbar\sum_{r\gge 1}f_{ji}^{(r)}u^{-r},\quad d_k(u)=1+\hbar\sum_{r\gge 1}d_{k}^{(r)}u^{-r}.
\]
We further set
\begin{align*}
b_i(u)&=\hbar\sum_{r\gge 0}b_{i,r}u^{-r-1}:= f_{i+1,i}\big(u-\tfrac{i\hbar}{2}\big),\\
h_0(u)&=1+\hbar\sum_{r\gge 0}h_{0,r}u^{-r-1}:=d_1(u),\\
h_i(u)&=1+\hbar\sum_{r\gge 0}h_{i,r}u^{-r-1}:=\tl d_{i}\big(u-\tfrac{i\hbar}{2}\big)d_{i+1}\big(u-\tfrac{i\hbar}{2}\big). 
\end{align*}
Then the relations in $\Y_{N,\hbar}$ become
\begin{align*}
[ h_{i,r}, h_{j,s}]&=0, \qquad { h_{i,2r}=0, } \\	
[ h_{i,r+1}, b_{j,s}]-[ h_{i,r-1}, b_{j,s+2}]&=c_{ij}\hbar \{ h_{i,r-1}, b_{j,s+1}\}+\frac{1}{4}c_{ij}^2\hbar^2[ h_{i,r-1}, b_{j,s}],\\
[ b_{i,r+1}, b_{j,s}]-[ b_{i,r}, b_{j,s+1}]&=\frac{c_{ij}\hbar}{2}\{ b_{i,r}, b_{j,s}\}-2\delta_{ij}(-1)^r h_{i,r+s+1},\\
[b_{i,r},b_{j,s}]&=0,\qquad  \text{ for }|i-j|>1,\\
\mathrm{Sym}_{k_1,k_2}\big[b_{i,k_1},[b_{i,k_2},b_{j,r}] \big]& =[h_{i,k_1+k_2+1},b_{j,r-1}],
\quad \text{ for } c_{ij}=-1.
\end{align*}
Here we have adopted the convention that $h_{i,-1}=\hbar^{-1}$. Note the novel feature in the above relations where an $\hbar^2$-term appears; compare \cite[Rem. 1.4.4\&2.4.5]{Mol07}. 

The algebras $\Y_{N,\hbar}$ with $\hbar\ne 0$ are all isomorphic to each other, with an explicit isomorphism $\Y_{N,\hbar}\to \Y_{N}=\Y_{N,1}$ given by $s_{ij}^{(r)}\mapsto s_{ij}^{(r)}\hbar^{r-1}$. The isomorphism in terms of the current generators is given by
\[
h_{i,r}\mapsto h_{i,r}\hbar^r,\qquad b_{j,s}\mapsto b_{j,s}\hbar^{s}.
\]

The relations above are homogeneous if we regard $\hbar$ as a formal parameter of degree $1$ and let $\deg h_{i,r} =\deg b_{i,r}=r$, for all $i,r$.

\subsection{Generalizations}
\label{subsec:gen}

Let $A=(c_{ij})$ be a Cartan matrix of type ADE. Then the generators and relations in Theorem~\ref{mainthm2} still make sense and define an algebra denoted by $\Y(A)$. Theorem~\ref{mainthm2} asserts that $\Y(A)$ associated to the type A$_{N-1}$  Cartan matrix is isomorphic to the special twisted Yangian associated to the orthogonal Lie algebra (or better, associated to the symmetric pair of type AI). 

We conjecture that $\Y(A)$ associated to the type D Cartan matrix is isomorphic to the twisted Yangians associated to symmetric pairs of split type DI introduced in \cite{GR16}. 

Associated to Cartan matrices $A$ of exceptional types $E_6$-$E_7$-$E_8$, the algebras $\Y(A)$ are new. They should be viewed as twisted Yangians associated to symmetric pairs of split type EI, EV and EVIII; they are deformations of twisted current algebras $\g [z]^\theta$ associated to the Chevalley involution $\omega$ of a simple Lie algebra $\g$ of type E, where $\theta (x\otimes z^r) =\omega(x) \otimes (-z)^r$. 

In a totally different (degeneration) approach, we have constructed in \cite{LWZ24} a class of twisted Yangian type algebras from Drinfeld presentations \cite{LW21, Z22} of split affine $\imath$quantum groups. These algebras from degeneration will be shown to coincide with $\Y(A)$ associated to Cartan matrix $A$ of type ADE here, and the algebra $\Y(A)$ will be shown to admit a PBW basis of the expected size.

For a Cartan matrix $A=(c_{ij})$ of {\it affine} ADE type (excluding A$_1^{(1)}$), the algebra $\Y(A)$ defined by the generators and relations in Theorem~\ref{mainthm2} remains valid. We shall refer to this algebra as the {\em twisted affine Yangians} (of split affine ADE types). They are deformations of universal central extensions of twisted double current algebras $\g [t^{\pm 1}, z]^\theta$ associated to the Chevalley involution $\omega$ on $\g$, where $\theta (x\otimes t^s z^r) =\omega(x) \otimes t^{-s} (-z)^r$. These algebras should be viewed as a twisted analogue of affine Yangians. One expects that they admit a degeneration to a twisted analogue of deformed double current algebras (cf. \cite{G07}). 

The construction in this paper opens the door for constructing the Drinfeld type current presentations for the twisted Yangians associated to symmetric pairs of type ABCD \cite{Ol92,MR02,GR16} and also for defining new twisted Yangians associated to symmetric pairs of type FI and GI in current presentations; compare \cite{LWZ24}. 

Brundan-Kleshchev's parabolic presentations of the Yangian $\rY(\gl_N)$ \cite{BK05} (generalizing Drinfeld's current presentation) allowed them to further develop connections to finite W-algebras of type A through a notion of shifted Yangians. The current presentation of the twisted Yangian $\Y_N$ in this paper leads to parabolic presentations of $\Y_N$, shifted twisted Yangians, and applications to finite W-algebras of classical type (cf. \cite{Br16}). This has been addressed in \cite{LPT+25}.

\subsection{Proof of the Drinfeld type presentation}
\label{subsec:proofmain}

\begin{proof}[Proof of Theorem \ref{mainthm}]
We first show that the twisted Yangian $\Y_N$ has generators $h_{i,r}$, $ b_{j,r}$ and these generators satisfy the relations stated in the theorem. Define $h_{i,r},b_{i,r}$ via Gauss decomposition as in \eqref{hdef}, \eqref{bdef} for $1\lle i<N$ and $r\in\bN$. Define $h_0(u)=\sum_{r\gge 0}h_{0,r}u^{-r-1}=d_1(u)$, namely $h_{0,r}=d_1^{(r+1)}$. It is clear from the proof of Lemma \ref{lem:special-in-Gauss}, cf. also Lemma \ref{lem:ei-generate-eij}, that $h_{i,r},b_{j,r}$ for $0\lle i<N$, $1\lle j<N$, $r\in\bN$, generate the twisted Yangian $\Y_N$.

We prove that $h_{i,r},b_{i,r}$ satisfy the listed relations by induction on $N$. The base cases $N=2,3$ are essentially confirmed in Proposition \ref{sx2p} and Theorem \ref{so3rel}, respectively.  The condition $h_{0,2r}=0$ follows from Lemma \ref{lem:s11} as $h_0(u)=d_1(u)=s_{11}(u)$ is even. Suppose now the relations hold for the twisted Yangian $\Y_{N}$. We would like to prove it for the  $\Y_{N+1}$ case. Recall that we have the natural embedding $\Y_N\to \Y_{N+1}$, $s_{ij}(u)\to s_{ij}(u)$ which sends $h_{i}(u)$, $b_j(u)$ of $\Y_N$ to the series in $ \Y_{N+1}$ of the same name. Note that we also have the homomorphism $\vartheta_1:\X_N\to \X_{N+1}$ with the properties in Corollaries \ref{theta} and \ref{theta2}. Hence the above relations hold for the case when $i,j<N$ or $i,j\gge 2$. It remains to show the relations for the case when ($i\in\{0,1\}$ and $j=N$) or ($i=N$ and $j\in\{0,1\}$). This follows immediately from Corollary \ref{cor:commu-d-e-f} when $N\gge 3$.

Denote by $\YDrN$ the algebra generated by the above generators and relations as in the statement of the theorem. The above argument implies that there is a surjective homomorphism
\beq\label{surjm}
\YDrN \twoheadrightarrow \Y_N,
\eeq
which sends the generators $h_{i,r}$ $b_{j,r}$ of $\YDrN$ to the elements of $\Y_N$ denoted by the same symbols. Hence to complete the proof, we need to prove the homomorphism is injective. By Proposition \ref{PBWgauss}, the set of monomials in 
\[
h_{i,2r+1}\qquad \text{with}\qquad 0\lle i< N,\quad r\gge 0
\]
and
\[
b_{ij,r} \qquad \text{with}\qquad 1\lle j<i\lle N,\quad r\gge 0,
\]
taken in some fixed order is linearly independent in the twisted Yangian $\Y_N$.

For any $1\lle j<i\lle N$, define elements $\ttb_{ij}^{(r)}$ in $\YDrN$ inductively by the rule:
\beq\label{pfN0}
\ttb_{i+1,i}^{(0)}=b_{i,0},\qquad \ttb_{i+1,j}^{(r)}=[\ttb_{i+1,i}^{(0)},\ttb_{ij}^{(r)}]
\eeq
for $j<i$. Note that this definition is consistent with \eqref{eq:ei-generate-eij}. Define 
\beq\label{pfN1}
\ttb_{ii}^{(r)}=h_{0,r}+h_{1,r}+\cdots+h_{i-1,r}
\eeq
and set $\ttb_{ji}^{(r)}=(-1)^{r+1}\ttb_{ij}^{(r)}$ for $i>j$. Then we have $\ttb_{ii}^{(r)}+(-1)^r\ttb_{ii}^{(r)}=0$, namely $\ttb_{ii}^{(r)}=0$ if $r$ is even. Clearly, the homomorphism \eqref{surjm} maps $\ttb_{ij}^{(r)}$ (with $i>j$) and $\ttb_{ii}^{(r)}$ of $\YDrN$ to $b_{ij,r}$ and $h_{0,r}+h_{1,r}+\cdots+h_{i-1,r}$ of $\Y_N$. Theorem \ref{mainthm} will follow if we prove that the algebra $\YDrN$ is spanned by the monomials in $\ttb_{ij}^{(r)}$, $1\lle j<i\lle N$, and $h_{i,2r+1}$, $1\lle i\lle N$, $r\in\bN$, taken in some fixed order.

Define an ascending filtration on $\YDrN$ by setting $\deg b_{i,r}=\deg h_{i,r}=r$. Let $\gr\,\YDrN$ be the associated graded algebra. Let $\bar \ttb_{ij}^{(r)}$, $\bar h_{i,r}$ be the images of $\ttb_{ij}^{(r)}$, $h_{i,r}$, respectively, in the $r$-th component of the graded algebra $\gr\,\YDrN$. We claim that these images satisfy
\beq\label{todo}
[\bar \ttb_{ij}^{(r)},\bar\ttb_{kl}^{(s)}]=\delta_{jk}\bar\ttb_{il}^{(r+s)}-\delta_{li}\bar\ttb_{kj}^{(r+s)}-(-1)^r\delta_{ki}\bar\ttb_{jl}^{(r+s)}+(-1)^r\delta_{lj}\bar\ttb_{ki}^{(r+s)}.
\eeq
By the relation $\bar \ttb_{ji}^{(r)}=(-1)^{r+1}\bar \ttb_{ij}^{(r)}$, it suffices to show it for the cases when $i\gge j$ and $k\gge l$.

We prepare some relations that will be used later. Observe from \eqref{hhN}, \eqref{hbN} and \eqref{h1bN} that
\beq\label{pfN3}
[\bar h_{i,r},\bar\ttb_{k+1,k}^{(s)}]=2(1-\delta_{r,ev})c_{ik}\bar \ttb_{k+1,k}^{(r+s)},
\eeq
where $\delta_{r,ev}=1$ if $r$ is even and $\delta_{r,ev}=0$ otherwise.

The relation \eqref{bbN2} implies that for $|i-k|>1$, we have
\beq\label{pfN4}
[\bar\ttb_{i+1,i}^{(r)},\bar\ttb_{k+1,k}^{(s)}]=0.
\eeq
Due to \eqref{bibi}, we have
\beq\label{pfN5}
[\bar\ttb_{i+1,i}^{(r)},\bar\ttb_{i+1,i}^{(s)}]=(-1)^r \bar h_{i}^{(r+s)}=(-1)^r(\bar\ttb_{i+1,i+1}^{(r+s)}-\bar\ttb_{ii}^{(r+s)}).
\eeq

If $|i-k|=1$, then using \eqref{bbN}, \eqref{serreN}, and \eqref{pfN3}, we obtain that
\beq\label{pfN6}
[\bar\ttb_{i+1,i}^{(r+1)},\bar\ttb_{k+1,k}^{(s)}]=[\bar\ttb_{i+1,i}^{(r)},\bar\ttb_{k+1,k}^{(s+1)}],
\eeq
\beq\label{pfN7}
[\bar\ttb_{i+1,i}^{(r)},[\bar\ttb_{i+1,i}^{(s)},\bar\ttb_{k+1,k}^{(p)}]]=-[\bar\ttb_{i+1,i}^{(s)},[\bar\ttb_{i+1,i}^{(r)},\bar\ttb_{k+1,k}^{(p)}]]-2(-1)^r\delta_{r+s,ev}\bar\ttb_{k+1,k}^{(r+s+p)}.
\eeq

If $i-j>1$, then we prove that
\beq\label{pfN8}
\bar\ttb_{ij}^{(r)}=[\bar\ttb_{i,i-1}^{(0)},\bar\ttb_{i-1,j}^{(r)}]=[\bar\ttb_{i,j+1}^{(r)},\bar\ttb_{j+1,j}^{(0)}],
\eeq
where the first equality holds by definition \eqref{pfN0}. Indeed, for the second equality, we use induction on $i-j$ with the following,
\begin{align*}
\bar\ttb_{i+1,j}^{(r)}&\stackrel{\eqref{pfN0}}{=}[\bar\ttb_{i+1,i}^{(0)},\bar\ttb_{ij}^{(r)}]=[\bar\ttb_{i+1,i}^{(0)},[\bar\ttb_{i,j+1}^{(r)},\bar\ttb_{j+1,j}^{(0)}]]\\&\stackrel{\eqref{pfN4}}{=}[[\bar\ttb_{i+1,i}^{(0)},\bar\ttb_{i,j+1}^{(r)}],\bar\ttb_{j+1,j}^{(0)}]\stackrel{\eqref{pfN0}}{=}[\bar\ttb_{i+1,j+1}^{(r)},\bar\ttb_{j+1,j}^{(0)}].
\end{align*}
Repeating the same argument with the help of \eqref{pfN8}, we obtain that for $i-j>1$ and $r,s\in\bN$, we have
\beq\label{1pfn}
\bar\ttb_{i+1,j}^{(r+s)}=[\bar\ttb_{i+1,i}^{(r)},\bar\ttb_{i,j}^{(s)}].
\eeq

Now we are in the position of proving \eqref{todo}. We first deal with the case when $i=j$ or $k=l$. Assume that $i=j$. If $k=l$, then the left-hand side of \eqref{todo} is zero by \eqref{hhN} while the right-hand side of \eqref{todo} is clearly zero for either $i\ne k$ or $i=k$. If $k>l$, then we proceed by induction on $k-l$.  It follows from \eqref{pfN1} and \eqref{pfN3} that
\beq\label{pfN2}
[\bar\ttb_{ii}^{(r)},\bar\ttb_{l+1,l}^{(s)}]=2(1-\delta_{r,ev})(\delta_{i,l+1}-\delta_{il})\bar\ttb_{l+1,l}^{(r+s)},
\eeq
confirming \eqref{todo} for this particular case when $k-l=1$. Then the general case follows from
\begin{align*}
[\bar \ttb_{ii}^{(r)},\bar\ttb_{k+1,l}^{(s)}]&=\big[\bar \ttb_{ii}^{(r)},[\bar\ttb_{k+1,k}^{(0)},\bar\ttb_{kl}^{(s)}]\big]\\
&=\big[[\bar \ttb_{ii}^{(r)},\bar\ttb_{k+1,k}^{(0)}],\bar\ttb_{kl}^{(s)}\big]+\big[\bar\ttb_{k+1,k}^{(0)},[\bar \ttb_{ii}^{(r)},\bar\ttb_{kl}^{(s)}]\big]
\end{align*}
using induction hypothesis, \eqref{pfN0}, and \eqref{1pfn}--\eqref{pfN2}. 

Then we shall verify \eqref{todo} case by case, considering all relations between pairs of indices $i>j$ and  $k>l$. There will be seven cases.

(1) If $j>k$, then \eqref{todo} is straightforward by \eqref{pfN4} and \eqref{pfN8}.

(2) If $j=k$, then using \eqref{pfN6} and \eqref{pfN8} we find that
\[
[\bar\ttb_{j+1,j}^{(r)},\bar\ttb_{j,j-1}^{(s)}]=\bar \ttb_{j+1,j-1}^{(r+s)}.
\]
Taking the commutator with $\bar\ttb_{j+2,j+1}^{(0)},\cdots,\bar\ttb_{i+1,i}^{(0)}$ and using \eqref{pfN4}, we obtain
\[
[\bar\ttb_{i+1,j}^{(r)},\bar\ttb_{j,j-1}^{(s)}]=\bar\ttb_{i+1,j-1}^{(r+s)}
\]
Similarly, apply the commutator with $\bar\ttb_{j-1,j-2}^{(0)},\cdots,\bar\ttb_{l+1,l}^{(0)}$ to obtain \eqref{todo} for $j=k$.

(3) Now we prove \eqref{todo} with $i=k$ and $j>l$. We first consider the commutator $[\bar\ttb_{i+3,i+1}^{(r)},\bar\ttb_{i+2,i}^{(s)}]$. It suffices to do the calculation for the case $i=1$ as the same argument works for all $i$. Clearly, it follows from \eqref{pfN4} and \eqref{pfN7} that
\beq\label{pfN9}
[\bar\ttb_{43}^{(r)},[\bar\ttb_{32}^{(0)},[\bar\ttb_{32}^{(0)},\bar\ttb_{21}^{(s)}]]]=[[[\bar\ttb_{43}^{(r)},\bar\ttb_{32}^{(0)}],\bar\ttb_{32}^{(0)}],\bar\ttb_{21}^{(s)}]=0.
\eeq
We have
\begin{align*}
 [\bar\ttb_{42}^{(r)},\bar\ttb_{31}^{(s)}] &\stackrel{\eqref{pfN8}}{=} [[\bar\ttb_{43}^{(r)},\bar\ttb_{32}^{(0)}],[\bar\ttb_{32}^{(0)},\bar\ttb_{21}^{(s)}]]\stackrel{\eqref{pfN9}}{=} [[\bar\ttb_{43}^{(r)},[\bar\ttb_{32}^{(0)},\bar\ttb_{21}^{(s)}]],\bar\ttb_{32}^{(0)}]\\
&\stackrel{\eqref{pfN4}}{=}  [[[\bar\ttb_{43}^{(r)},\bar\ttb_{32}^{(0)}],\bar\ttb_{21}^{(s)}],\bar\ttb_{32}^{(0)}]\stackrel{\eqref{pfN9}}{=} [[\bar\ttb_{43}^{(r)},\bar\ttb_{32}^{(0)}],[\bar\ttb_{21}^{(s)},\bar\ttb_{32}^{(0)}]]\stackrel{\eqref{pfN8}}{=}  - [\bar\ttb_{42}^{(r)},\bar\ttb_{31}^{(s)}].
\end{align*}
Therefore,
\beq\label{pfN10}
[\bar\ttb_{i+3,i+1}^{(r)},\bar\ttb_{i+2,i}^{(s)}]=[\bar\ttb_{42}^{(r)},\bar\ttb_{31}^{(s)}]=0.
\eeq

Then we look at a particular case,
\begin{align*}
[\bar\ttb_{32}^{(r)},\bar\ttb_{31}^{(s)}]&\stackrel{\eqref{pfN0}}{=}[\bar\ttb_{32}^{(r)},[\bar\ttb_{32}^{(0)},\bar\ttb_{21}^{(s)}]] \stackrel{\eqref{pfN7}}{=}-[\bar\ttb_{32}^{(0)},[\bar\ttb_{32}^{(r)},\bar\ttb_{21}^{(s)}]]-2(-1)^r\delta_{r,ev}\bar\ttb_{21}^{(r+s)}\\
&\stackrel{\eqref{pfN6}}{=}-[\bar\ttb_{32}^{(0)},[\bar\ttb_{32}^{(0)},\bar\ttb_{21}^{(r+s)}]]-2(-1)^r\delta_{r,ev}\bar\ttb_{21}^{(r+s)}
\\
&\stackrel{\eqref{pfN7}}{=}\bar\ttb_{21}^{(r+s)}-2(-1)^r\delta_{r,ev}\bar\ttb_{21}^{(r+s)}=(-1)^{r+1}\bar\ttb_{21}^{(r+s)}.
\end{align*}
This implies further that
\begin{align*}
[\bar\ttb_{32}^{(r)},\bar\ttb_{41}^{(s)}]&\stackrel{\eqref{pfN0}}{=}[\bar\ttb_{32}^{(r)},[\bar\ttb_{43}^{(0)},\bar\ttb_{31}^{(s)}]]=[[\bar\ttb_{32}^{(r)},\bar\ttb_{43}^{(0)}],\bar\ttb_{31}^{(s)}]+[[\bar\ttb_{32}^{(r)},\bar\ttb_{31}^{(s)}],\bar\ttb_{43}^{(0)}]\\
& \stackrel{\eqref{pfN0}}{=}-[\bar\ttb_{42}^{(r)},\bar\ttb_{31}^{(s)}]+[(-1)^{r+1}\bar\ttb_{21}^{(r+s)},\bar\ttb_{43}^{(0)}]\xlongequal{\eqref{pfN4}\eqref{pfN10}}0-0=0.
\end{align*} 

Similarly, we have $[\bar\ttb_{l+2,l-1}^{(r)},\bar\ttb_{l+1,l}^{(s)}]=0$. Applying \eqref{pfN8} to it, we get
\beq\label{pfN11}
[\bar\ttb_{ij}^{(r)},\bar\ttb_{l+1,l}^{(s)}]=0
\eeq
for any $i>l+1$ and $j<l$. 

Finally, to show \eqref{todo} for the case $i=k$ and $j>l$, namely 
\beq\label{pfN12}
[\bar\ttb_{ij}^{(r)},\bar\ttb_{il}^{(s)}]=(-1)^{r+1}\bar\ttb_{jl}^{(r+s)},
\eeq
it essentially reduces to show the case $[\bar\ttb_{i,i-1}^{(r)},\bar\ttb_{i,i-2}^{(s)}]$ by using \eqref{pfN8} and \eqref{pfN11}. This case is the same for all $i$, and for $i=3$ it has been done above.

(4) The case $i>k$ and $j=l$ is similar the case (3). Again, the application of \eqref{pfN8} and \eqref{pfN11} reduces the calculation to prove that $[\bar\ttb_{i,i-2}^{(r)},\bar\ttb_{i-1,i-2}^{(s)}]=(-1)^{s+1}\bar\ttb_{i,i-1}^{(r+s)}$.

(5) For the case $i=k$ and $j=l$, we show by induction on $i-j$ that
\[
[\bar\ttb_{ij}^{(r)},\bar\ttb_{ij}^{(s)}]=(-1)^r(\bar\ttb_{ii}^{(r+s)}-\bar\ttb_{jj}^{(r+s)}).
\]
The base case $i-j=1$ is proved in \eqref{pfN5}. By induction hypothesis, we have
\begin{align*}
[\bar\ttb_{ij}^{(r)},\bar\ttb_{ij}^{(s)}]&\stackrel{\eqref{pfN0}}{=}[\bar\ttb_{ij}^{(r)},[\bar\ttb_{i,i-1}^{(0)},\bar\ttb_{i-1,j}^{(s)}]]=[[\bar\ttb_{ij}^{(r)},\bar\ttb_{i,i-1}^{(0)}],\bar\ttb_{i-1,j}^{(s)}]+[\bar\ttb_{i,i-1}^{(0)},[\bar\ttb_{ij}^{(r)},\bar\ttb_{i-1,j}^{(s)}]]\\&
\stackrel{\eqref{pfN12}}{=}[\bar\ttb_{i-1,j}^{(r)},\bar\ttb_{i-1,j}^{(s)}]+[\bar\ttb_{i,i-1}^{(0)},(-1)^{(s+1)}\bar\ttb_{i,i-1}^{(s)}]\\
&=(-1)^{r}(\bar\ttb_{i-1,i-1}^{(r+s)}-\bar\ttb_{jj}^{(r+s)})+(-1)^{s+1}(\bar\ttb_{ii}^{(r+s)}-\bar\ttb_{i-1,i-1}^{(r+s)})=(-1)^{r}(\bar\ttb_{ii}^{(r+s)}-\bar\ttb_{jj}^{(r+s)}).
\end{align*}

(6) If $i>k>j>l$, then due to \eqref{pfN8} and \eqref{pfN11}, it suffices to verify that $[\bar\ttb_{i+3,i+1}^{(r)},\bar\ttb_{i+2,i}^{(s)}]=0$ which is done in \eqref{pfN10}.

(7) If $i>k>l>j$, then similarly it is reduced to \eqref{pfN11}.

Thus, we complete the verification of the relations \eqref{todo}. They imply that the associated graded algebra $\gr\,\YDrN$ is spanned by the set of monomials in the elements $\ttb_{ij}^{(r)}$, $1\lle j<i\lle N$, and $\ttb_{ii}^{(2r+1)}$, $1\lle i\lle N$, $r\in\bN$ taken in some fixed order. We shall take the order that all $\ttb_{ii}^{(2r+1)}$ are to the left of $\ttb_{jk}^{(s)}$. Due to \eqref{pfN1} and \eqref{hhN}, we have the associated graded algebra $\gr\,\YDrN$ is spanned by the set of monomials in the elements $\ttb_{ij}^{(r)}$, $1\lle j<i\lle N$, and $h_{i,2r+1}$, $0\lle i< N$, $r\in\bN$ taken in some fixed order that all $h_{i,2r+1}$ are to the left of $\ttb_{jk}^{(s)}$, proving of injectivity of the homomorphism \eqref{surjm}. This completes the proof of the theorem.
\end{proof}

\end{document}